\documentclass[twoside]{article}
\usepackage[utf8]{inputenc}
\usepackage{graphicx}
\usepackage{latexsym}
\usepackage{amsmath}
\usepackage{amsfonts}
\usepackage{amssymb}
\usepackage{amsthm}
\usepackage{graphicx}
\usepackage[english]{babel}
\usepackage{xcolor}

\newcommand{\disp}{\displaystyle}

\newcommand{\fin}{\hfill\mbox{$\quad{}_{\Box}$}}
\newcommand{\fineq}{\vspace{-.75cm$\fin$}\par\bigskip}

\newtheorem{coro}{\bf \sffamily Corollary}
\newtheorem{theo}{\bf \sffamily Theorem}
\newtheorem{prop}{\bf \sffamily Proposition}
\newtheorem{rem}{\bf \sffamily Remark}
\newtheorem{lemma}{\bf \sffamily Lemma}

\begin{document}

\title{Compactly Supported Solutions near a Nehari Critical Value}
\author{ J.I. D\'{\i}az$^{1}$, J. Hern\'andez$^{1}$, and Y. Ilyasov$^{2}$ \\
{\small $^{1}$Instituto de Matem\'{a}tica Interdisciplinar, Universidad
Complutense de Madrid, 28040 Madrid, Spain }\\
{\small $^{2}$Institute of Mathematics with Computing Centre,}\\
{\small Ufa Federal Research Centre of the Russian Academy of Sciences,
450008 Ufa, Russia }\\
[1mm] {\small \texttt{jidiaz@ucm.es}, \texttt{jesus.hernande@telefonica.net}%
, \texttt{ilyasov02@gmail.com} }}
\date{}
\maketitle

\begin{abstract}
We study non-negative solutions of the indefinite sublinear Dirichlet
problem 
$$
\left \{\
\begin{array}{ll}
-\Delta u=\lambda u+m(x)|u|^{\alpha -1}u&\quad\hbox{ in $\Omega$},\\
u=0&\quad \hbox{ on $\partial \Omega$},
\end{array}
\right .
$$
where $\Omega \subset \mathbb{R}^{N}$ is a
smooth bounded domain, $0<\alpha <1$, $m\in L^{\infty }(\Omega )$ is a
sign-changing or non-positive weight, and $\lambda \in \mathbb{R}$.

Using variational and fibering methods on the Nehari manifold, we construct
two branches of non-negative solutions distinguished by the sign of the
second derivative of the energy along rays. The branch $u_{\lambda }^{+}$,
for which $E_{\lambda }^{\prime \prime }(u_{\lambda }^{+})>0,$ has negative
energy, whereas the branch $u_{\lambda }^{-}$, for which $E_{\lambda
}^{\prime \prime }(u_{\lambda }^{-})<0,$ has positive energy. Their
existence and asymptotic behavior are governed by the critical value 
\begin{equation*}
\lambda ^{\ast }=\inf \left\{\dfrac{\disp \int_{\Omega }|\nabla u|^{2}\,dx}{
\disp \int_{\Omega }|u|^{2}\,dx}:u\in H_{0}^{1}(\Omega )\setminus \{0\},\quad
\int_{\Omega }m(x)|u|^{\alpha +1}\,dx\geq 0\right\} .
\end{equation*}

In the degenerate case $m\leq 0$, when the zero region $\Omega ^{0}=\hbox{int%
}\{x\in \Omega :m(x)=0\}$ has positive measure, we identify $\lambda ^{\ast }$ with
the first Dirichlet eigenvalue $\lambda _{1,\Omega ^{0}}$. We show that the
positive-energy branch $u_{\lambda }^{-}$ tends to zero as $\lambda \uparrow
\lambda _{1,\Omega ^{0}}$, while its normalized profile converges to the
first eigenfunction in $\Omega ^{0}$.

We then combine uniform $L^{\infty }$-estimates with a local supersolution
argument to obtain dead-core formation. In particular, if the weight is
uniformly negative in a neighborhood of $\partial \Omega $, then solutions
on the positive-energy branch $u_{\lambda }^{-}$ have compact support for $%
\lambda $ sufficiently close to $\lambda _{1,\Omega ^{0}}$ from below. After obtaining explicit solutions to a one-dimensional degenerate problem, we
prove the convergence of the free boundaries (the boundary of the support of 
$u_{\lambda }^{-}$ to the boundary of $\Omega ^{0}$) as $\lambda \uparrow
\lambda _{1,\Omega ^{0}}.$ For sign-changing weights, solutions on the
negative-energy branch $u_{\lambda }^{+}$ also have compact support when $%
\lambda $ is sufficiently negative.
\end{abstract}

\textit{Mathematics Subject Classification}: 92B05, 35D05, 35J60, 35J75

\textit{Key words and phrases: Diffusive logistic sublinear equations,
Nehari manifold method, bifurcation diagrams, compact support solutions.}

\section{Introduction}

Let $\Omega\subset\mathbb{R}^N$, $N\geq1$, be a bounded connected domain
with sufficiently smooth boundary. In this paper, we study the existence and
qualitative properties of non-negative solutions of the Dirichlet problem
\begin{equation}\label{Eq1}
\begin{cases}
	-\Delta u=\lambda u+m(x)|u|^{\alpha-1}u & \text{in }\Omega,\\
	u=0 & \text{on }\partial\Omega,
\end{cases}
\end{equation}
where $0<\alpha<1,\lambda\in\mathbb{R}, m\in L^\infty(\Omega)$.

The weight $m$ may change sign and may vanish on a nonempty open subset of
$\Omega$. Problem \eqref{Eq1} combines a linear spectral term with a sublinear
indefinite reaction. The non-Lipschitz character of the nonlinearity at the
origin allows non-negative solutions to develop a dead core, that is, a
nonempty open set on which the solution vanishes identically. Consequently,
besides strictly positive solutions, problem \eqref{Eq1} may possess flat
solutions satisfying $u>0$ in $\Omega$ and
$u=\partial u/\partial n=0$ on $\partial\Omega$, as well as solutions whose
support is compactly contained in $\Omega$. Such phenomena arise in models
of nonlinear diffusion, population dynamics, reaction--diffusion processes,
and porous media. We refer, for example, to
\cite{Fleming,Bandle,Cantrell} and the references therein. The literature
for $\alpha\geq1$ is extensive, but will not be discussed here, since our
interest is restricted to $0<\alpha<1$.

The variational analysis of elliptic problems with indefinite nonlinearities
has a long history; see, among others, \cite{AlamaTarantello1996,BerestyckiCapuzzoDolcettaNirenberg1995}.
The fibering method provides a natural framework for detecting and
 distinguishing variational branches; see \cite{DrabekPohozaev1997}. Related parameter-dependent variational ideas and nonlinear Rayleigh quotient methods for the analysis of nonlinear bifurcations and extreme parameter values of Nehari-type manifolds were developed in\cite{Ilyasov1997,Ilyasov2002,ilyaReil,IlyasKaye,Ilyasov2022}.

For the more general quasilinear problem
\[
-\Delta_pu
=
\lambda|u|^{p-2}u+a(x)|u|^{q-2}u,
\qquad 1<q<p,
\]
Kaufmann and Ramos \cite{Kaufmann-Ramos} established, among other results,
the existence of a negative-energy ground state below a constrained spectral
threshold and, under the condition
$\int_\Omega a\varphi_p^q\,dx<0$, the existence of a second
positive-energy solution between the first eigenvalue and that threshold.
They also studied uniqueness, positivity, and dead-core formation.
Subsequently, Bobkov and Tanaka \cite{Bobkov-Tanaka} gave a fine analysis of
the critical and supercritical spectral intervals for the same
$p$-Laplacian problem. In particular, they showed that, for genuinely
nonlinear principal parts and under the condition $p>2q$, non-negative
solutions may persist to the right of the constrained critical value.
This phenomenon cannot occur in the present semilinear sublinear case
$p=2$, $q=\alpha+1\in(1,2)$.

The semilinear problem \eqref{Eq1} was recently considered by D\'iaz,
Hern\'andez and Il'yasov \cite{DiazHernandezIlyasov2026}. That work pursued
a broad qualitative program involving non-negative and positive solutions,
bifurcation from infinity, uniqueness and stability questions, local
barriers, compactly supported solutions, and Pohozaev-type restrictions.
It also used the Nehari manifold to obtain two variationally distinguished
non-negative solutions in the interval between the first eigenvalue and a
constrained spectral threshold. Thus, neither the definition of the
threshold nor the basic existence of the two variational levels is claimed
here as a new result.

The purpose of the present paper is more focused. We isolate the critical
mechanism specific to weights having a nonempty open zero region and give a
self-contained treatment of the corresponding limiting branch. Our main
emphasis is placed on the degenerate case
\[
m\leq0,\qquad
\Omega^0=\operatorname{int}\{m=0\},\qquad
|\Omega^0|>0.
\]
We identify the constrained critical value with the first Dirichlet
eigenvalue of $\Omega^0$, prove that the positive-energy branch vanishes as
the parameter approaches this value from below, and identify its normalized
limit with the first eigenfunction of the zero region, extended by zero
outside $\Omega^0$. This spectral asymptotic information is then combined
with a quantitative local supersolution argument to prove the formation of a
dead core in a boundary ring whose width is uniform along the branch.
Consequently, the solutions have compact support for parameters sufficiently
close to the finite critical value.

We also analyze the opposite asymptotic regime $\lambda\to-\infty$ for
the negative-energy branch. A direct maximum estimate yields uniform
smallness of the solutions, and the same local dead-core mechanism then
produces a fixed zero ring near $\partial\Omega$. These results describe two
distinct routes to compact support: one is governed by the spectral geometry
of the zero set of the weight at a finite endpoint, whereas the other is
driven by uniform amplitude decay as the parameter tends to $-\infty$.

Accordingly, the contribution of this paper is not a further broad
classification of all positive and non-negative solutions of \eqref{Eq1}.
Rather, it is the rigorous identification and analysis of the chain
\[
\lambda^*=\lambda_{1,\Omega^0},
\qquad
u_\lambda^-\rightarrow0,
\qquad
\frac{u_\lambda^-}{\|u_\lambda^-\|_{H_0^1(\Omega)}}
\rightarrow
\widetilde{\varphi}_{1,\Omega^0},
\]
where $\widetilde{\varphi}_{1,\Omega^0}$ denotes the zero extension of
$\varphi_{1,\Omega^0}$ to $\Omega$, together with its consequence: a uniform
boundary dead core and compact support of the corresponding variational
solutions. We use the term ``dead core'' even though, in our setting, the
zero region need not be located at the center of the domain.

We use the notation
\[
\Omega^+:=\{x\in\Omega:m(x)>0\},\qquad
\Omega^-:=\{x\in\Omega:m(x)<0\},
\]
and
\[
Z_m:=\{x\in\Omega:m(x)=0\},\qquad
\Omega^0:=\operatorname{int}Z_m.
\]
Here and throughout the paper, $|A|$ denotes the Lebesgue measure of a
measurable set $A\subset\mathbb{R}^N$. Whenever one of the sets
$\Omega^+$, $\Omega^-$, or $\Omega^0$ is required to be nontrivial in the
measure-theoretic sense, this will be stated explicitly.

Whenever spectral quantities associated with $\Omega^0$ or
$\Omega^0\cup\Omega^+$ are used, we assume that the corresponding set is a
nonempty connected Lipschitz domain. Whenever spectral quantities associated
with $\Omega^0$ are involved, we also assume that
$\Omega^0\Subset\Omega$.

Whenever an argument identifies a function vanishing a.e. on
$\{m\neq0\}$ with an element of $H_0^1(\Omega^0)$, we further assume that
\[
Z_m=\overline{\Omega^0}
\quad\text{up to a set of Lebesgue measure zero}.
\]
In particular, in the degenerate case $m\leq0$, this amounts to assuming
that $m<0$ a.e. in $\Omega\setminus\overline{\Omega^0}$.

\noindent
We denote by $\lambda_{1,\Omega}$ and $\varphi_{1,\Omega}$ the first
Dirichlet eigenvalue and a corresponding positive eigenfunction of
$-\Delta$ in $\Omega$. Similarly, $\lambda_{1,\Omega^0}$ and
$\varphi_{1,\Omega^0}$ denote the first Dirichlet eigenpair in $\Omega^0$.
When regarded as a function on $\Omega$, $\varphi_{1,\Omega^0}$ is extended
by zero outside $\Omega^0$, and this extension is denoted by
$\widetilde{\varphi}_{1,\Omega^0}$.

The variational functional associated with \eqref{Eq1} is
\[
E_\lambda(u)
=
\frac12\int_\Omega\bigl(|\nabla u|^2-\lambda u^2\bigr)\,dx
-\frac{1}{\alpha+1}\int_\Omega m(x)|u|^{\alpha+1}\,dx,
\qquad
u\in H_0^1(\Omega).
\]
We also write
\[
H_\lambda(u)
:=
\int_\Omega\bigl(|\nabla u|^2-\lambda u^2\bigr)\,dx,
\qquad
M(u)
:=
\int_\Omega m(x)|u|^{\alpha+1}\,dx.
\]
The corresponding Nehari manifold is
\[
\mathcal N_\lambda
:=
\{u\in H_0^1(\Omega)\setminus\{0\}:H_\lambda(u)=M(u)\}.
\]
Its decomposition according to the sign of the second derivative of the
fibering map leads to two different families of solutions, denoted by
$u_\lambda^+$ and $u_\lambda^-$.

A central role is played by the Nehari critical value
\begin{equation}
\lambda^*
:=
\inf\left\{
\frac{\displaystyle\int_\Omega|\nabla u|^2\,dx}
{\displaystyle\int_\Omega|u|^2\,dx}
:
u\in H_0^1(\Omega)\setminus\{0\},\quad M(u)\geq0
\right\}.
\label{Ouyang}
\end{equation}

The value $\lambda^*$ is an extreme parameter value associated with the
Nehari constraint. Extreme values of this type and the behavior of solution
branches near them are naturally described within the nonlinear Rayleigh
quotient framework; see
\cite{Ilyasov2002,ilyaReil,IlyasKaye,Ilyasov2022}
and the references therein. We adopt the convention
$\lambda^*=+\infty$ if $|\Omega^+\cup\Omega^0|=0$.
By the variational characterization of the first Dirichlet eigenvalue,
$0<\lambda_{1,\Omega}\leq\lambda^*$, and
$\lambda^*<+\infty$ whenever $|\Omega^+\cup\Omega^0|>0$.
The position of $\lambda^*$ relative to $\lambda_{1,\Omega}$ is determined
by the sign of the weight along the first eigenfunction:
\begin{equation}
\lambda^*>\lambda_{1,\Omega}
\quad\Leftrightarrow\quad
\int_\Omega m(x)\varphi_{1,\Omega}^{\alpha+1}\,dx<0,
\label{eq:lambda-star-characterization}
\end{equation}
whereas
\begin{equation}
\lambda^*=\lambda_{1,\Omega}
\quad\Leftrightarrow\quad
\int_\Omega m(x)\varphi_{1,\Omega}^{\alpha+1}\,dx\geq0.
\label{eq:lambda-star-equality}
\end{equation}

For completeness, and in order to fix the particular variational selections
used later, we first recall the two solution levels below $\lambda^*$,
together with the properties needed in the sequel. The role of the next
theorem is preparatory: it identifies the precise branches whose asymptotic
profiles and associated supports are analyzed in the subsequent sections.

\begin{theo}
\label{thm1}
Assume that $0<\alpha<1$ and $m\in L^\infty(\Omega)$.

\begin{itemize}
\item[i)]
If $|\Omega^+|>0$, then, for every $\lambda<\lambda^*$, problem
\eqref{Eq1} possesses a non-negative weak solution
$u_\lambda^+\in H_0^1(\Omega)\cap C^{1,\gamma}(\overline{\Omega})$ for any
$\gamma\in(0,1)$, such that
\[
E_\lambda(u_\lambda^+)<0,
\qquad
E_\lambda''(u_\lambda^+)>0.
\]
Moreover, the minimum energy level $(-\infty,\lambda^*)\ni\lambda
\longmapsto E_\lambda(u_\lambda^+)$ is continuous and strictly decreasing, and
\[
E_\lambda(u_\lambda^+)\longrightarrow0
\qquad\text{as }\lambda\to-\infty.
\]
\item[ii)]
Suppose that
\[
\int_\Omega m(x)\varphi_{1,\Omega}^{\alpha+1}\,dx<0,
\qquad
|\Omega^+\cup\Omega^0|>0.
\]
Then
$0<\lambda_{1,\Omega}<\lambda^*<+\infty$.
For every $\lambda\in(\lambda_{1,\Omega},\lambda^*)$, problem
\eqref{Eq1} possesses a non-negative weak solution
$u_\lambda^-\in H_0^1(\Omega)\cap C^{1,\gamma}(\overline{\Omega})$ for any
$\gamma\in(0,1)$, such that
\[
E_\lambda(u_\lambda^-)>0,
\qquad
E_\lambda''(u_\lambda^-)<0.
\]
Moreover, the corresponding minimum energy level $(\lambda_{1,\Omega},\lambda^*)\ni\lambda
\longmapsto E_\lambda(u_\lambda^-)$ is continuous and strictly decreasing, and
\[
E_\lambda(u_\lambda^-)\longrightarrow+\infty
\qquad\text{as }\lambda\downarrow\lambda_{1,\Omega}.
\]
If, in addition, condition
\begin{equation}
\lambda_{1,\Omega^0}>
\lambda_{1,\Omega^0\cup\Omega^+}
\tag{M}
\label{HypoFirstEigenvalues}
\end{equation}
holds, then
\[
E_\lambda(u_\lambda^-)\longrightarrow0
\qquad\text{as }\lambda\uparrow\lambda^*.
\]
\end{itemize}
In the degenerate case $m\leq0$ a.e. in $\Omega$, assume that
$\Omega^0=\operatorname{int}\{m=0\}$ is a nonempty connected Lipschitz
domain compactly contained in $\Omega$ and that
$m<0$ a.e. in $\Omega\setminus\overline{\Omega^0}$.
Then
\[
\lambda^*=\lambda_{1,\Omega^0},
\qquad
E_\lambda(u_\lambda^-)\longrightarrow0
\quad\text{as }\lambda\uparrow\lambda_{1,\Omega^0},
\]
without assuming condition {\rm (M)}.

\end{theo}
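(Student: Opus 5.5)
The plan is to use the fibering method. For $u\in H_0^1(\Omega)\setminus\{0\}$ set $\phi_u(t)=E_\lambda(tu)=\frac{t^2}{2}H_\lambda(u)-\frac{t^{\alpha+1}}{\alpha+1}M(u)$. Since $1<\alpha+1<2$, the map $\phi_u$ has a nonzero critical point only if $H_\lambda(u)$ and $M(u)$ have the same sign, and then the critical point is unique:
\[
t(u)=\bigl(M(u)/H_\lambda(u)\bigr)^{1/(1-\alpha)}.
\]
If $H_\lambda(u)>0$ and $M(u)>0$, the critical point is a minimum, with $\phi_u''>0$ and negative energy. If both are negative, it is a maximum, with $\phi_u''<0$ and energy
\[
E_\lambda(t(u)u)=\Bigl(\tfrac12-\tfrac{1}{\alpha+1}\Bigr)\frac{|M(u)|^{2/(1-\alpha)}}{|H_\lambda(u)|^{(1+\alpha)/(1-\alpha)}}>0.
\]
Accordingly I split $\mathcal N_\lambda=\mathcal N_\lambda^+\cup\mathcal N_\lambda^-$. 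For $\lambda<\lambda^*$ the definition \eqref{Ouyang} rules out $H_\lambda(u)\le 0\le M(u)$ with $u\ne0$, so $\mathcal N^0_\lambda=\emptyset$. The Lagrange multiplier argument then shows that minimizers over $\mathcal N^\pm_\lambda$ are critical points of $E_\lambda$. Replacing $u$ by $|u|$ gives nonnegative minimizers. Since $m|u|^{\alpha-1}u\in L^\infty$, elliptic regularity gives $C^{1,\gamma}(\overline\Omega)$.

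\textbf{Part i).} I define $\hat E^+(\lambda)=\inf\{E_\lambda(t(u)u):H_\lambda(u)>0,\ M(u)>0\}$, which is homogeneous of degree zero in $u$. The set is nonempty: $|\Omega^+|>0$ gives $u$ supported in $\Omega^+$ with $M(u)>0$, and $H_\lambda(u)>0$ holds since $\lambda<\lambda^*$. The energy is $-c_\alpha M(u)^{2/(1-\alpha)}/H_\lambda(u)^{(1+\alpha)/(1-\alpha)}$, so $\hat E^+<0$. For the minimizing sequence I normalize $\|u_n\|_{H^1_0}=1$ and pass to weak limits, using compactness of $H_0^1\hookrightarrow L^2, L^{\alpha+1}$. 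The key point is that $H_\lambda$ stays bounded away from zero along the sequence. If $H_\lambda(u_n)\to0$ while $M(u_n)$ stays bounded below, the energy would go to $-\infty$. This is impossible because the energy is bounded below, since $E_\lambda$ is coercive on $\mathcal N^+_\lambda$ via $H_\lambda(u)=M(u)\le C\|u\|^{\alpha+1}$ together with the coercivity of $H_\lambda$ on $\{M\ge0\}$ for $\lambda<\lambda^*$. That coercivity follows from \eqref{Ouyang} by a contradiction/compactness argument. The weak limit $u$ has $M(u)>0$, and $H_\lambda(u)\le\liminf H_\lambda(u_n)$, so $u$ is a minimizer. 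Strict monotonicity comes from $\partial_\lambda H_\lambda=-\int u^2<0$: fixing the minimizer at $\lambda$ and evaluating the quotient at $\lambda'>\lambda$ gives strict decrease. Continuity follows from the same comparison plus upper semicontinuity of infima of continuous families. As $\lambda\to-\infty$, $H_\lambda(u)\ge|\lambda|\|u\|_2^2+\|\nabla u\|^2$ while $M(u)\le C\|u\|_2^{\alpha+1}$ by Hölder, so the energy tends to $0$.

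\textbf{Part ii).} The equivalence \eqref{eq:lambda-star-characterization} gives $\lambda_{1,\Omega}<\lambda^*$. For $\lambda\in(\lambda_{1,\Omega},\lambda^*)$, $\varphi_{1,\Omega}$ satisfies $H_\lambda<0$ and $M<0$, so $\mathcal N^-_\lambda\ne\emptyset$. I minimize $\hat E^-(\lambda)=\inf c_\alpha|M|^{2/(1-\alpha)}/|H_\lambda|^{(1+\alpha)/(1-\alpha)}$ over $\{H_\lambda<0,\ M<0\}$, normalized by $\|u\|_2=1$, so that $H_\lambda$ is bounded and $\|\nabla u\|$ is bounded. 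The obstacle to compactness is $M(u_n)\to0$. In that case the weak limit satisfies $H_\lambda(u)<0$ (otherwise the quotient would blow up) and $M(u)=0$, hence $M(u)\ge0$, contradicting $\lambda<\lambda^*$ via \eqref{Ouyang}. Monotonicity and continuity again follow from $\lambda$-linearity of $H_\lambda$. As $\lambda\downarrow\lambda_{1,\Omega}$, every $u$ with $\|u\|_2=1$ has $|H_\lambda(u)|\le\lambda-\lambda_{1,\Omega}\to0$, while $|M(u)|$ stays bounded below on the relevant set, since near-minimizers approach $\varphi_{1,\Omega}$ and $M(\varphi_{1,\Omega})<0$. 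Hence $E\to+\infty$.

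\textbf{Limit $\lambda\uparrow\lambda^*$.} I use test functions supported where $m\ge0$. Under (M), take the eigenfunction $\psi$ of $\Omega^0\cup\Omega^+$ at level $\lambda_{1,\Omega^0\cup\Omega^+}<\lambda_{1,\Omega^0}$. Perturbing toward functions with small negative $M$ and $H_\lambda$ negative, tending to zero in the right ratio, shows the infimum tends to $0$. In the degenerate case $m\le0$, first $\lambda^*=\lambda_{1,\Omega^0}$: $M(u)\ge0$ forces $u=0$ a.e. on $\{m<0\}$, hence $u\in H_0^1(\Omega^0)$ using $Z_m=\overline{\Omega^0}$ and the Lipschitz boundary. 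For the limit, take $v_\varepsilon=\widetilde\varphi_{1,\Omega^0}+\varepsilon w$ with $w$ supported near $\partial\Omega^0$. Then $M(v_\varepsilon)=O(\varepsilon^{\alpha+1})$ and $H_\lambda(v_\varepsilon)=(\lambda_{1,\Omega^0}-\lambda)\|\varphi\|^2+2\varepsilon\int\nabla\varphi\nabla w-\ldots$. Choosing $w$ so that the linear term is negative, using the outward normal derivative of $\varphi_{1,\Omega^0}$, and tuning $\varepsilon$ against $\lambda^*-\lambda$ makes $|M|^{2}/|H|^{1+\alpha}\to0$. I expect this balancing, and making it rigorous with only Lipschitz $\partial\Omega^0$ (where the boundary term needs approximation), to be the main obstacle.
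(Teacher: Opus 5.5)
\textbf{Main gap: the endpoint limit under (M).} Your argument for $E_\lambda(u_\lambda^-)\to0$ as $\lambda\uparrow\lambda^*$ under (M) does not work, because it is built on the wrong base function.

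In the setting of (M) the positive phase has positive measure, so the zero extension $\psi$ of the first eigenfunction of $\Omega^0\cup\Omega^+$ satisfies $M(\psi)>0$. Every minimizer of \eqref{Ouyang} has $M=0$: a minimizer with $M>0$ would be a free critical point of the Rayleigh quotient, hence a multiple of $\varphi_{1,\Omega}$, which is impossible since $M(\varphi_{1,\Omega})<0$. So $\psi$ is not extremal, $\lambda^*<\lambda_{1,\Omega^0\cup\Omega^+}$, and $H_\lambda(\psi)\geq H_{\lambda^*}(\psi)>0$, $M(\psi)>0$ for all $\lambda\le\lambda^*$. Small perturbations of $\psi$ therefore stay in the wrong quadrant.

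In fact, a compactness argument shows that any family $v_\lambda\in\Theta_\lambda^-$ with $\|v_\lambda\|_2=1$ and $J_\lambda^-(v_\lambda)\to0$ must converge strongly, along subsequences, to an extremal of \eqref{Ouyang}. So the test functions have to be built around an extremal $\bar u$, where $H_{\lambda^*}(\bar u)=M(\bar u)=0$, and this is exactly where (M) enters. Consider the Fritz John relation $\mu_0DH_{\lambda^*}(\bar u)=\mu_1DM(\bar u)$.
\begin{itemize}
\item The case $\mu_1=0$ would make $\bar u\geq0$ a Dirichlet eigenfunction of $\Omega$, which is excluded as above.
\item The case $\mu_0=0$ gives $DM(\bar u)=0$. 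Then $\bar u=0$ a.e. on $\{m\neq0\}$, so $\bar u\in H_0^1(\Omega^0)$ and $\lambda_{1,\Omega^0}\le\lambda^*\le\lambda_{1,\Omega^0\cup\Omega^+}$, which is precisely what (M) forbids.
\end{itemize}
Hence a multiple $u_*$ of $\bar u$ solves \eqref{Eq1} at $\lambda^*$. Testing with $\varphi_{1,\Omega}$ gives $DH_{\lambda^*}(u_*)[\varphi_{1,\Omega}]=2(\lambda_{1,\Omega}-\lambda^*)\int_\Omega u_*\varphi_{1,\Omega}\,dx<0$ and $DM(u_*)[\varphi_{1,\Omega}]=(\alpha+1)(\lambda_{1,\Omega}-\lambda^*)\int_\Omega u_*\varphi_{1,\Omega}\,dx<0$. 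With $v_\varepsilon=u_*+\varepsilon\varphi_{1,\Omega}$ and $\varepsilon=\sqrt{\lambda^*-\lambda}$, both $H_\lambda(v_\varepsilon)$ and $M(v_\varepsilon)$ behave like $-c\varepsilon$, so $J_\lambda^-(v_\varepsilon)\le C\varepsilon\to0$ (Theorem~\ref{thm:ExisExtr}(iii) and Lemma~\ref{solextr}(b)). In your sketch (M) plays no identifiable role, and this mechanism is the missing idea.

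\textbf{Degenerate case and smaller points.} In the degenerate case your construction is essentially the paper's, and the obstacle you anticipate is not real. Put $\phi:=\widetilde\varphi_{1,\Omega^0}$ and take $w\in C_c^\infty(\Omega)$ with $w\equiv1$ on a neighbourhood of $\overline{\Omega^0}$. Since $\nabla\phi=0$ a.e. outside $\Omega^0$ and $\nabla w=0$ near $\overline{\Omega^0}$, you get $B_*(\phi,w)=-\lambda^*\int_\Omega\phi\,dx<0$; this is the weak form of your normal-derivative identity. Also $M(w)<0$, because $m<0$ a.e. outside $\overline{\Omega^0}$. The paper instead shows $B_*(\phi,\cdot)\not\equiv0$ by the strong maximum principle and then adds a bump supported in $\Omega\setminus\overline{\Omega^0}$. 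Either way no boundary regularity of $\Omega^0$ is needed, and $\varepsilon=K\delta$ with $K$ large gives $J_\lambda^-\le C\delta^{(1+\alpha)/(1-\alpha)}$.

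Elsewhere your route is genuinely simpler than the paper's Taylor-expansion comparison (Proposition~\ref{prop:IneqParam}). Comparing the quotients at a fixed $v$ gives strict monotonicity at once, and your H\"older bound gives the rate $|E_\lambda(u_\lambda^+)|\le C|\lambda|^{-(1+\alpha)/(1-\alpha)}$. Two small repairs are needed:
\begin{itemize}
\item Upper semicontinuity of an infimum plus monotone decrease yields only left-continuity. Right-continuity of the $u_\lambda^-$ level needs $|H_{\lambda'}(v_{\lambda'}^-)|\ge c>0$ for minimizers, uniformly for $\lambda'$ near $\lambda$. This follows from a uniform bound $M\le-c_M$ on normalized elements of $\{H_{\lambda'}\le0\}$, i.e. your existence compactness argument made uniform in $\lambda'$.
\item The claim $m|u|^{\alpha-1}u\in L^\infty$ presupposes $u\in L^\infty$, which requires a bootstrap first.
\end{itemize}
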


The proof of Theorem~\ref{thm1} will be given later. We point out that the
basic existence assertions in parts \textnormal{i)} and
\textnormal{ii)} belong to the variational framework developed for
subhomogeneous indefinite problems in
\cite{Kaufmann-Ramos,Bobkov-Tanaka,DiazHernandezIlyasov2026}.
The continuity and strict monotonicity of the selected minimum energy levels
are proved in the present manuscript.
In the nondegenerate case, the endpoint relation
$E_\lambda(u_\lambda^-)\to0$ as $\lambda\uparrow\lambda^*$ follows under
condition {\rm (M)}. In the degenerate case $m\leq0$, however, the same
endpoint limit remains valid without {\rm (M)}, as proved below.

\begin{figure}[tph]
\begin{center}
	\includegraphics[width=7cm]{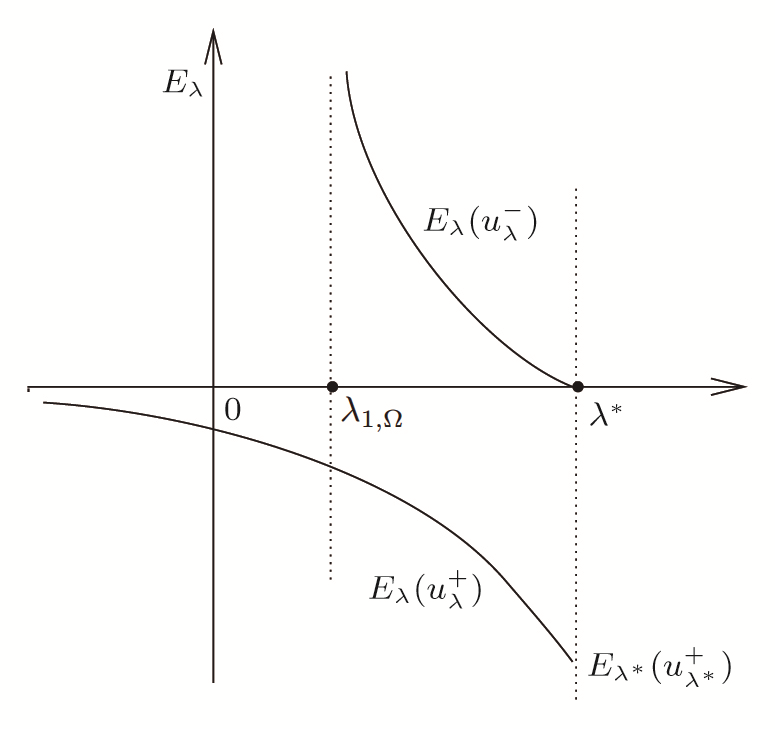}\\[0pt]
\end{center}
\caption{Qualitative behavior of the energy levels
	$E_\protect\lambda(u_\protect\lambda^+)$ and
	$E_\protect\lambda(u_\protect\lambda^-)$ when
	$\protect\int_\Omega
	m(x)\protect\varphi_{1,\Omega}^{\protect\alpha+1}\,dx<0$.}
\end{figure}

The branches $u_\lambda^+$ and $u_\lambda^-$ have different variational
origins. The first corresponds to a stable critical point, in the sense of
\cite{Dupaigne}, of the fibering map and has negative energy. The second
corresponds to an unstable critical point and has positive energy. Note that
the stable energy branch is decreasing. This does not contradict the
well-known result \cite{Crandel} that a decreasing bifurcation branch is
unstable when represented in terms of the $L^p$ norm, because here the
branch represents the energy level rather than the $L^p$ norm.

The corresponding two-level structure is known for the general
subhomogeneous indefinite problem; see \cite{Kaufmann-Ramos} and
\cite[Theorem~1.7]{Bobkov-Tanaka}. In the present semilinear case,
$\lambda^*$ is the terminal value of the positive-energy branch. Our
subsequent analysis concerns what happens to this branch as the endpoint is
approached and how its smallness affects the geometry of its support.

The degenerate case $m\leq0$ and $|\Omega^0|>0$ has a particularly
transparent spectral structure. Under the standing assumptions,
\[
\lambda^*=\lambda_{1,\Omega^0}.
\]
Moreover, since $m<0$ a.e. in
$\Omega\setminus\overline{\Omega^0}$,
\[
\int_\Omega m(x)\varphi_{1,\Omega}^{\alpha+1}\,dx<0,
\qquad
\lambda_{1,\Omega}
<
\lambda^*
=
\lambda_{1,\Omega^0}.
\]
The solution $u_\lambda^-$ then converges to zero as
$\lambda\uparrow\lambda_{1,\Omega^0}$, whereas its normalized profile
converges to $\widetilde{\varphi}_{1,\Omega^0}$. This vanishing property is
the starting point for our compact-support analysis.

To obtain a dead core near $\partial\Omega$, negativity of the weight merely
at boundary points is not sufficient. We require a full boundary ring on
which the weight is uniformly negative. For $\rho>0$, set
\[
\mathcal C_\rho
:=
\{x\in\Omega:\operatorname{dist}(x,\partial\Omega)<\rho\}.
\]
Our first compact-support theorem concerns the degenerate case $m\leq0$.

\begin{theo}
\label{thm3}
	Assume that $0<\alpha<1$, $m\in L^\infty(\Omega)$, $m\leq0$ a.e. in
	$\Omega$, and that
	\[
	\Omega^0:=\operatorname{int}\{x\in\Omega:m(x)=0\}
	\]
	is a nonempty connected $C^{1,1}$ domain compactly contained in $\Omega$, $|\Omega^0|>0$. Assume also that
	$m<0$ a.e. in $\Omega\setminus\overline{\Omega^0}$.
Suppose that there exist $\rho_0>0$ and $m_{\rho_0}^->0$ such that
\begin{equation}\label{eq:boundary-collar}
	\mathcal C_{\rho_0}\subset\Omega^-
\end{equation}
and
\begin{equation}\label{eq:negative-boundary-weight}
	-m(x)\geq m_{\rho_0}^-
	\qquad\text{for a.e. }x\in\mathcal C_{\rho_0}.
\end{equation}
Then there exists $\varepsilon>0$ such that, for every $\lambda\in
(\lambda_{1,\Omega^0}-\varepsilon,\lambda_{1,\Omega^0}),$
the non-negative weak solution $u_\lambda^-$ given by
Theorem~\ref{thm:Main2} has compact support in $\Omega$. More precisely, there exists $\rho\in(0,\rho_0)$, independent of $\lambda$
sufficiently close to $\lambda_{1,\Omega^0}$, such that
$u_\lambda^-=0$ in $\mathcal C_\rho$. Consequently,
\[
\operatorname{supp}u_\lambda^-
\subset
\{x\in\Omega:\operatorname{dist}(x,\partial\Omega)\geq\rho\}
\Subset\Omega.
\]
\end{theo}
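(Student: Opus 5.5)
The plan is to combine the vanishing of the branch $u_\lambda^-$ as $\lambda\uparrow\lambda_{1,\Omega^0}$ with a local comparison argument in the boundary collar $\mathcal C_{\rho_0}$, where $m\le -m_{\rho_0}^-<0$.

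\textbf{Step 1: uniform smallness in $L^\infty$.} By Theorem~\ref{thm1} (degenerate case), $E_\lambda(u_\lambda^-)\to0$ as $\lambda\uparrow\lambda_{1,\Omega^0}$. On $\mathcal N_\lambda$ we have
\[
E_\lambda(u)=\Bigl(\tfrac12-\tfrac1{\alpha+1}\Bigr)H_\lambda(u),
\]
so $H_\lambda(u_\lambda^-)\to0$ and $M(u_\lambda^-)\to0$. To upgrade this to $\|u_\lambda^-\|_{H_0^1}\to0$, I would argue by contradiction. Suppose the norms stay bounded below along a sequence, and normalize $v=u/\|u\|$. Then $H_\lambda(v)\to0$ and $M(v)\le0$ with $M(v)\to0$. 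Weak limits $v$ satisfy $\int m|v|^{\alpha+1}=0$, hence $v=0$ a.e. on $\{m<0\}$. By the assumption $Z_m=\overline{\Omega^0}$ a.e., this gives $v\in H_0^1(\Omega^0)$. Lower semicontinuity then forces $\int|\nabla v|^2\le\lambda_{1,\Omega^0}\int v^2$, and if the weak limit were $0$ we would get $1=\lim\|\nabla v\|^2=\lim\lambda\|v\|_2^2=0$, a contradiction. So the weak limit is a first eigenfunction of $\Omega^0$.

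To exclude a nonvanishing amplitude, I would use the Nehari identity together with the scaling $t^{1-\alpha}$. If $\|u\|\ge c$, then $H_\lambda(u)=\|u\|^2H_\lambda(v)$ and $M(u)=\|u\|^{\alpha+1}M(v)$, so $\|u\|^{1-\alpha}=M(v)/H_\lambda(v)$. Near $\lambda_{1,\Omega^0}$, the quantity $H_\lambda(v)$ has the sign of $\lambda_{1,\Omega^0}-\lambda>0$ in the limit, while $M(v)\le0$. Together with the strict negativity of $H_\lambda$ needed on $\mathcal N_\lambda$ when $M<0$, this has to be balanced carefully. I expect the cleaner route to be a direct bound: since $H_\lambda(u)=M(u)\le0$ and $H_\lambda(u)\to0$, we get $\int(-m)|u|^{\alpha+1}\to0$. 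Moser iteration with $m\le0$ (the nonlinearity only helps as an absorption) bounds $\|u\|_\infty$ by $C\|u\|_{L^2}$. I would then show $\|u\|_{L^2}\to0$ from $\|u\|\to0$, which is the asserted vanishing of the branch and is presumably established earlier in the paper. The output of this step is $\|u_\lambda^-\|_{L^\infty(\Omega)}\le\delta(\lambda)\to0$.

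\textbf{Step 2: local supersolution in the collar.} Fix $\rho<\rho_0$. For $x_0$ with $\operatorname{dist}(x_0,\partial\Omega)<\rho$, use the ball $B=B_R(x_0)\cap\Omega\subset\mathcal C_{\rho_0}$, with $R=\rho_0-\rho$. On this set,
\[
-\Delta u\le \lambda u-m_{\rho_0}^-u^\alpha\le -\tfrac12 m_{\rho_0}^-u^\alpha,
\]
which holds once $\lambda\delta^{1-\alpha}\le m_{\rho_0}^-/2$. Take the standard barrier
\[
w(x)=K|x-x_0|^{2/(1-\alpha)},\qquad K=K(N,\alpha,m_{\rho_0}^-).
\]
It satisfies $-\Delta w+\tfrac12 m_{\rho_0}^- w^\alpha\ge0$, and on $\partial B_R(x_0)$ we have $w\ge KR^{2/(1-\alpha)}\ge\delta$ for $\lambda$ close to $\lambda_{1,\Omega^0}$. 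On $\partial\Omega\cap B$ we have $u=0\le w$. The comparison principle for this monotone absorption operator then gives $u\le w$, and hence $u(x_0)=0$. Since $R$ does not depend on $\lambda$, $u_\lambda^-=0$ in $\mathcal C_\rho$, which yields the support statement.

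\textbf{Main obstacle.} The main obstacle is Step 1: obtaining a uniform $L^\infty$ smallness rather than mere energy smallness. I would first try to invoke the paper's vanishing result $u_\lambda^-\to0$ in $H_0^1$, then apply elliptic $L^\infty$ estimates (Moser iteration, using $m\le0$) to pass from $H_0^1$ to $L^\infty$.
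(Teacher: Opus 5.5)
Your proposal is correct and follows the paper's proof. Theorem~\ref{thm:Main2}(iii) together with the uniform subsolution bound of Proposition~\ref{prop:linftyEstimate} gives $\|u_\lambda^-\|_{L^\infty(\Omega)}\to0$ (Corollary~\ref{Coro L-infinity}), so the unfinished self-contained vanishing argument at the start of your Step~1 is not needed; a local barrier in the collar then forces $u_\lambda^-=0$ on $\mathcal C_\rho$ with $\rho$ independent of $\lambda$. The only difference is in the barrier: you absorb $\lambda u$ into half of the absorption and compare, for a monotone operator, with the explicit $K|x-x_0|^{2/(1-\alpha)}$, which is self-contained and works for any $\rho\in(0,\rho_0)$; the paper instead compares with $\psi_{1/N,q_0}^{-1}(|x-x_0|)$ for the full operator $-\Delta+q_0s^\alpha-\lambda s$ via Theorem~\ref{thm:Comp}, which is why it needs $\|u_\lambda^-\|_\infty<\delta_{\lambda,q_0}$, the analogue of your condition $\lambda\delta^{1-\alpha}\le m_{\rho_0}^-/2$.
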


\begin{figure}[tph]
\begin{center}
	\includegraphics[width=7cm]{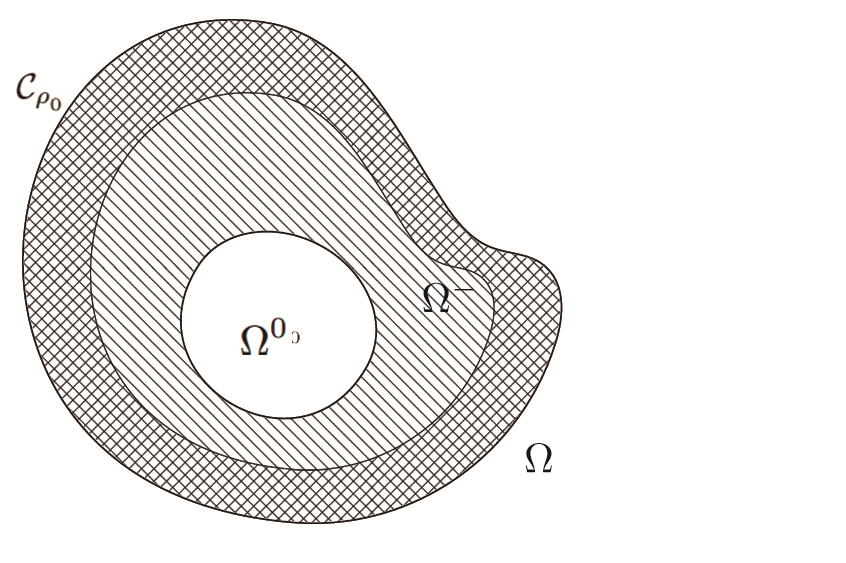}\\[0pt]
\end{center}
\caption{A uniformly negative boundary ring
	$\mathcal C_{\protect\rho_0}\subset\Omega^-$ and the zero region
	$\Omega^0=\operatorname{int}\{x\in\Omega:m(x)=0\}$.}
\label{fig:negative-boundary-collar}
\end{figure}

The preceding result concerns the upper endpoint of the branch
$u_\lambda^-$. A complementary phenomenon occurs on the branch
$u_\lambda^+$ as $\lambda\to-\infty$. In this regime, a direct maximum
estimate shows that
\[
\|u_\lambda^+\|_{L^\infty(\Omega)}
\leq
\left(
\frac{\|m^+\|_{L^\infty(\Omega)}}{|\lambda|}
\right)^{\frac{1}{1-\alpha}},
\]
and therefore
\[
\|u_\lambda^+\|_{L^\infty(\Omega)}
\longrightarrow0
\qquad\text{as }\lambda\to-\infty.
\]
Combined with the local dead-core criterion, this yields compactly supported
solutions for sufficiently negative values of the parameter.

\begin{theo}
\label{thm4}
Assume that $0<\alpha<1$, $|\Omega^+|>0$, $|\Omega^-|>0$, and
$m\in L^\infty(\Omega)$. Suppose that there exist $\rho_0>0$ and $m_{\rho_0}^-$ such that
\begin{equation}\label{eq:negative-boundary-collar-thm4}
	\mathcal C_{\rho_0}\subset\Omega^-
\end{equation}
and
\begin{equation}\label{eq:uniform-negative-boundary-thm4}
	-m(x)\geq m_{\rho_0}^-
	\qquad\text{for a.e. }x\in\mathcal C_{\rho_0}.
\end{equation}
Let $u_\lambda^+$ be the non-negative weak solution of \eqref{Eq1} given by
Theorem~\ref{thm1}. Then there exists $\sigma<0$ such that, for every
$\lambda<\sigma$, the solution $u_\lambda^+$ has compact support in
$\Omega$.

More precisely, there exists $\rho\in(0,\rho_0)$, independent of all
sufficiently negative $\lambda$, such that
$u_\lambda^+=0$ in $\mathcal C_\rho$. Consequently,
\[
\operatorname{supp}u_\lambda^+
\subset
\{x\in\Omega:\operatorname{dist}(x,\partial\Omega)\geq\rho\}
\Subset\Omega.
\]
\end{theo}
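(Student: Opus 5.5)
The plan has three steps: bound the solution uniformly in $L^\infty$, show it is uniformly small on an inner ring, and then kill it on a thinner ring next to $\partial\Omega$ by a local comparison argument exploiting the non-Lipschitz absorption.

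\emph{Step 1: $L^\infty$ smallness.} Let $x_0$ be a maximum point of $u=u_\lambda^+\in C^{1,\gamma}(\overline\Omega)$; since $u\ge0$ and $u=0$ on $\partial\Omega$, we may assume $u(x_0)>0$, so $x_0\in\Omega$. Formally $-\Delta u(x_0)\ge0$, hence
\[
0\le \lambda u(x_0)+m(x_0)u(x_0)^\alpha\le -|\lambda|u(x_0)+\|m^+\|_{L^\infty}u(x_0)^\alpha ,
\]
which gives $\|u\|_{L^\infty}\le (\|m^+\|_{L^\infty}/|\lambda|)^{1/(1-\alpha)}$. Since $u$ is only $C^{1,\gamma}$ and $m$ only $L^\infty$, I will make this rigorous by testing the equation with $(u-k)^+$, $k:=(\|m^+\|_\infty/|\lambda|)^{1/(1-\alpha)}$. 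On $\{u>k\}$ we have
\[
\lambda u+m u^\alpha\le u^\alpha\bigl(\|m^+\|_\infty-|\lambda|u^{1-\alpha}\bigr)<0,
\]
so $\int|\nabla(u-k)^+|^2\le0$ and therefore $u\le k$. Consequently $\delta_\lambda:=\|u_\lambda^+\|_{L^\infty}\to0$ as $\lambda\to-\infty$.

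\emph{Step 2: local dead-core criterion.} In $\mathcal C_{\rho_0}$, with $\lambda<0$, the solution satisfies
\[
-\Delta u+m_{\rho_0}^-u^\alpha\le -\Delta u+|\lambda|u-m u^\alpha=0 ,
\]
so $u$ is a subsolution of $-\Delta w+c\,w^\alpha=0$ with $c=m_{\rho_0}^-$. Fix $\rho=\rho_0/2$. For each $x_1\in\mathcal C_\rho$ take the ball $B=B_R(x_1)$ with $R=\rho_0/2$; after intersecting with $\Omega$ we have $B\cap\Omega\subset\mathcal C_{\rho_0}$. Use the classical radial barrier
\[
\overline w(x)=K|x-x_1|^{\beta},\qquad \beta=\frac{2}{1-\alpha},
\]
with $K$ chosen so that $-\Delta\overline w+c\,\overline w^\alpha\ge0$. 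This holds when $K^{1-\alpha}\le c/(\beta(\beta+N-2))$, since $\Delta\overline w=K\beta(\beta+N-2)|x-x_1|^{\beta-2}$ and $\overline w^\alpha=K^\alpha|x-x_1|^{\beta-2}$. Then $\overline w\ge K R^\beta$ on $\partial B$. If $\delta_\lambda\le KR^\beta$, we get $u\le\overline w$ on $\partial(B\cap\Omega)$, because $u=0$ on $\partial\Omega$. Applying the comparison principle for the monotone operator $-\Delta w+c\,w^\alpha$ (test with $(u-\overline w)^+$, using that $s\mapsto s^\alpha$ is nondecreasing) yields $u\le\overline w$ in $B\cap\Omega$, and in particular $u(x_1)\le\overline w(x_1)=0$.

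\emph{Step 3: conclusion.} Choose $\sigma<0$ such that $(\|m^+\|_\infty/|\sigma|)^{1/(1-\alpha)}\le KR^\beta$. Then for every $\lambda<\sigma$, Steps 1 and 2 give $u_\lambda^+=0$ in $\mathcal C_\rho$ with $\rho=\rho_0/2$, which is independent of $\lambda$. Hence $\operatorname{supp}u_\lambda^+\subset\{\operatorname{dist}(x,\partial\Omega)\ge\rho\}\Subset\Omega$.

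The main obstacle is the comparison step in Step 2. It must be carried out weakly on the possibly non-smooth set $B\cap\Omega$, with $u$ merely a weak subsolution and $m$ only bounded. Testing with $(u-\overline w)^+\in H_0^1(B\cap\Omega)$ should handle this: the boundary inequality holds on both pieces, namely $\partial B\cap\Omega$ via the $L^\infty$ bound and $\partial\Omega\cap B$ since $u=0$ there.
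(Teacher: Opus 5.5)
Your proof is correct and follows essentially the same route as the paper: the $L^\infty$ bound obtained by testing with $(u-k)^+$ is exactly Lemma~\ref{lem:Linfty-negative-lambda}, and the collar step is the local dead-core criterion of Theorem~\ref{thm:Comp}. Your explicit barrier $K|x-x_1|^{2/(1-\alpha)}$ for $-\Delta w+c\,w^\alpha$ is just the $\lambda=0$ case of the paper's profile $\eta_{1/N,q_0}$, with a slightly sharper constant, and discarding the favorable term $|\lambda|u$ for $\lambda<0$ makes the comparison self-contained without changing the mechanism.
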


Theorems~\ref{thm3} and~\ref{thm4} describe two different mechanisms leading
to compactly supported solutions. In Theorem~\ref{thm3}, the amplitude tends
to zero as the parameter approaches the finite spectral threshold
$\lambda_{1,\Omega^0}$ from below. In Theorem~\ref{thm4}, the amplitude
tends to zero as $\lambda\to-\infty$. In both cases, uniform negativity of
the weight in a boundary ring allows a local barrier argument to transform
smallness of the solution into the formation of a dead core close to
$\partial\Omega$.

The distinction from the results of \cite{Kaufmann-Ramos} is that the weight
is kept fixed: the dead core is produced by the vanishing of a distinguished
variational branch rather than by increasing the negative part of the
weight. The distinction from \cite{Bobkov-Tanaka} is equally clear. Their
principal new phenomenon concerns continuation into the supercritical
interval for $p>2q$, whereas our problem belongs to the semilinear regime
$p=2$, where such continuation does not occur. Our results instead identify
the internal spectral object governing the terminal behavior and use it to
derive uniform localization of the support.

The proofs combine nonlinear Rayleigh quotients, minimization on the Nehari
manifold, compactness arguments, spectral properties of the zero region
$\Omega^0$, uniform $L^\infty$ estimates, and local supersolutions of
compact-support type. The paper is organized as follows. We first introduce
the variational setting and analyze the Nehari critical value $\lambda^*$.
We then establish the existence and asymptotic behavior of the two selected
solution branches. Next, we identify $\lambda^*$ with
$\lambda_{1,\Omega^0}$ in the degenerate non-positive case and determine the
normalized limiting profile. We develop a local dead-core criterion and
apply it to obtain compactly supported solutions both near the finite
spectral endpoint and for sufficiently negative values of $\lambda$.
We then prove convergence of the free boundaries, i.e. the boundaries of
the supports of $u_\lambda^-$, to $\partial\Omega^0$ as
$\lambda\uparrow\lambda_{1,\Omega^0}$. For sign-changing weights, solutions
on the negative-energy branch $u_\lambda^+$ also have compact support when
$\lambda$ is sufficiently negative. Finally, we establish free-boundary
convergence in several additional cases.

\section{Preliminaries}

We write \(H_0^1:=H_0^1(\Omega)\), endowed with the norm
\[
\|u\|:=\left(\int_\Omega |\nabla u|^2\,dx\right)^{1/2},
\]
which, by the Poincar\'e inequality, is equivalent to the standard
\(H^1\)-norm on \(H_0^1(\Omega)\). Throughout the paper, integrals without
an explicitly indicated domain are understood to be taken over \(\Omega\).
We denote by \(L^p(\Omega)\), \(1<p<\infty\), the usual Lebesgue spaces,
with norm \(\|\cdot\|_p\). For \(u\in H_0^1(\Omega)\), set
\[
H_\lambda(u):=\int_\Omega\bigl(|\nabla u|^2-\lambda |u|^2\bigr)\,dx,
\qquad
M(u):=\int_\Omega m(x)|u|^{\alpha+1}\,dx,
\]
and define the associated energy functional by
\[
E_\lambda(u):=\frac12 H_\lambda(u)-\frac{1}{\alpha+1}M(u).
\]

A weak solution of \eqref{Eq1} is a critical point of \(E_\lambda\) in
\(H_0^1(\Omega)\). We construct solutions by minimizing \(E_\lambda\) on
the two components of the Nehari manifold:
\begin{align}
\widehat E_\lambda^+
&:=\inf\{E_\lambda(u):u\in\mathcal N_\lambda^+\},
\label{MinConv1}\\
\widehat E_\lambda^-
&:=\inf\{E_\lambda(u):u\in\mathcal N_\lambda^-\}.
\label{MinConv2}
\end{align}
Here
\[
\mathcal N_\lambda^\pm
:=
\{u\in H_0^1(\Omega)\setminus\{0\}:
E_\lambda'(u)=0,\ \pm E_\lambda''(u)>0\},
\]
while
\[
\mathcal N_\lambda
:=
\{u\in H_0^1(\Omega)\setminus\{0\}:E_\lambda'(u)=0\}.
\]

For \(F\in C^1(H_0^1(\Omega))\), we use the notation
\[
F'(u):=DF(u)[u]
=\left.\frac{d}{dt}F(tu)\right|_{t=1}.
\]
Thus
\[
E_\lambda'(u)=H_\lambda(u)-M(u),
\qquad
E_\lambda''(u)=H_\lambda(u)-\alpha M(u),
\]
and, in particular,
\[
u\in\mathcal N_\lambda
\quad\Longleftrightarrow\quad
H_\lambda(u)=M(u).
\]

\begin{lemma}
\label{Lemm3.3}
Let \(u\in H_0^1(\Omega)\) be a non-negative weak solution of \eqref{Eq1}.
Then \(u\in W^{2,p}(\Omega)\) for every \(p<\infty\), and consequently
\(u\in C^{1,\gamma}(\overline\Omega)\) for every \(\gamma\in(0,1)\).
If, in addition, \(m\) is locally H\"older continuous, then
\(u\in C^2(\Omega)\).
\end{lemma}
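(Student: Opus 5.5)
The plan is a standard elliptic bootstrap, starting from an $L^\infty$ bound. Write the equation as $-\Delta u=f(x):=\lambda u+m(x)u^\alpha$, which is valid because $u\ge0$ gives $|u|^{\alpha-1}u=u^\alpha$. Since $0<\alpha<1$ and $m\in L^\infty(\Omega)$, we have the pointwise bound
\[
|f|\le|\lambda|\,u+\|m\|_{L^\infty(\Omega)}u^{\alpha}\le(|\lambda|+\|m\|_{L^\infty(\Omega)})(1+u).
\]
So the right-hand side grows at most linearly in $u$, and the problem is subcritical and in fact sublinear. The first step is to show $u\in L^\infty(\Omega)$. For this we use Moser iteration, or equivalently the Brezis--Kato lemma: write $-\Delta u=a(x)u+b(x)$ with $a=\lambda+m\,u^{\alpha-1}\chi_{\{u\ge1\}}\in L^\infty$ and $b=m\,u^\alpha\chi_{\{u<1\}}\in L^\infty$. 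Brezis--Kato then gives $u\in L^q(\Omega)$ for every $q<\infty$.

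The second step applies the Calder\'on--Zygmund $L^p$ theory for the Dirichlet Laplacian on the smooth bounded domain $\Omega$. Because $u\in L^q$ for all $q$, we get $f\in L^p(\Omega)$ for all $p<\infty$, since $u^\alpha\in L^{q/\alpha}$. Since $u\in H_0^1$ is the unique weak solution of $-\Delta w=f$, $w=0$ on $\partial\Omega$, we conclude $u\in W^{2,p}(\Omega)\cap W^{1,p}_0(\Omega)$ for every $p<\infty$. Morrey's embedding $W^{2,p}\hookrightarrow C^{1,1-N/p}(\overline\Omega)$ for $p>N$ then gives $u\in C^{1,\gamma}(\overline\Omega)$ for every $\gamma\in(0,1)$, by taking $p$ large.

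The third step assumes in addition that $m$ is locally H\"older continuous. Since $u\in C^{1}$ and $t\mapsto t^\alpha$ is $\alpha$-H\"older on $[0,\infty)$, the composition $u^\alpha$ is locally $C^{0,\alpha}$; this holds even across the zero set of $u$, because $|u^\alpha(x)-u^\alpha(y)|\le|u(x)-u(y)|^\alpha\le C|x-y|^\alpha$. Hence $f$ is locally H\"older continuous with exponent $\min(\alpha,\beta)$, where $\beta$ is the H\"older exponent of $m$. Interior Schauder estimates then give $u\in C^{2,\min(\alpha,\beta)}_{\rm loc}(\Omega)\subset C^2(\Omega)$.

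The only delicate point is the first step, the passage from $u\in H_0^1$ to integrability of all orders. This is where the sublinear growth matters. If one prefers to avoid Brezis--Kato, a direct bootstrap also works: $u\in L^{2^*}$ implies $f\in L^{2^*}$, hence $u\in W^{2,2^*}$, and the Sobolev embedding improves the integrability exponent at each step. Finitely many iterations reach $p>N/2$ and then $L^\infty$. The number of iterations depends only on $N$, so either route closes the argument. The non-Lipschitz character of $u^\alpha$ at $0$ causes no difficulty, because only integrability and H\"older continuity of $f$ are needed, never differentiability.
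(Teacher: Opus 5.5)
Your proposal is correct and follows the same route as the paper. The paper obtains the $L^\infty$ bound by a bootstrap it cites from Dr\'abek--Nicolosi; you make this step explicit, either through Brezis--Kato or through the iterated $W^{2,q}$--Sobolev scheme based on $|f|\le C(1+u)$. Both arguments then use Calder\'on--Zygmund $W^{2,p}$ regularity, Morrey's embedding and interior Schauder estimates. Your inequality $|u^\alpha(x)-u^\alpha(y)|\le|u(x)-u(y)|^\alpha$ also supplies the H\"older continuity of the right-hand side, which the paper's Schauder step leaves implicit.
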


\proof
A bootstrap argument, as in \cite{Drabeck-Nicolosi}, gives
\(u\in L^\infty(\Omega)\). Hence
\(\lambda u+m(x)u^\alpha\in L^p(\Omega)\) for every \(p<\infty\), and
standard elliptic regularity yields \(u\in W^{2,p}(\Omega)\).
Sobolev embedding then gives
\(u\in C^{1,\gamma}(\overline\Omega)\) for every \(\gamma\in(0,1)\).
If \(m\) is locally H\"older continuous, interior Schauder regularity
implies \(u\in C^2(\Omega)\). See also
\cite{AgmonDN,Gilb-Trud} and \cite[Appendix~B]{Struwe}. $\fin$

\section{Solutions at the Nehari critical value}

We now study the critical value
\begin{equation}\label{OuyangRepeated}
\lambda^*
:=
\inf\left\{
\frac{\displaystyle\int_\Omega |\nabla u|^2\,dx}
{\displaystyle\int_\Omega |u|^2\,dx}
:
u\in H_0^1(\Omega)\setminus\{0\},\quad M(u)\geq0
\right\}.
\end{equation}
By definition, \(0<\lambda_{1,\Omega}\leq\lambda^*\), and
\(\lambda^*<+\infty\) whenever \(|\Omega^0\cup\Omega^+|>0\).
If \(|\Omega^0\cup\Omega^+|=0\), we set \(\lambda^*:=+\infty\).

The position of \(\lambda^*\) relative to the first eigenvalue is
characterized by
\[
\lambda^*>\lambda_{1,\Omega}
\quad\Leftrightarrow\quad
\int_\Omega m(x)\varphi_{1,\Omega}^{\alpha+1}\,dx<0,
\]
whereas
\[
\lambda^*=\lambda_{1,\Omega}
\quad\leftrightarrow\quad
\int_\Omega m(x)\varphi_{1,\Omega}^{\alpha+1}\,dx\geq0.
\]

The definition of \(\lambda^*\) also gives, for every \(\lambda<\lambda^*\),
\begin{equation}\label{eq:21}
M(u)\geq0 \quad\Rightarrow\quad H_\lambda(u)>0,
\end{equation}
or, equivalently,
\begin{equation}\label{eq:22}
H_\lambda(u)\leq0 \quad\Rightarrow\quad M(u)<0,
\end{equation}
for every \(u\in H_0^1(\Omega)\setminus\{0\}\).

We shall use the following assumption.

\begin{description}
\item[\textbf{(M)}]
If \(|\Omega^0|>0\), then \(\Omega^0\cup\Omega^+\) is a domain and
\begin{equation}
	\lambda_{1,\Omega^0\cup\Omega^+}
	<
	\lambda_{1,\Omega^0}.
	\tag{M}
	\label{Condition M}
\end{equation}
\end{description}

Whenever \(\lambda_{1,\Omega^0\cup\Omega^+}\) is used, we assume that
\(\Omega^0\cup\Omega^+\) is a nonempty bounded Lipschitz domain.

\begin{rem}[On condition {\rm (M)} and its geometric meaning]
Condition {\rm (M)} is essentially geometric. If \(\Omega^0\) is a proper
subdomain of the connected domain \(\Omega^0\cup\Omega^+\), then strict
domain monotonicity of the first Dirichlet eigenvalue gives $\lambda_{1,\Omega^0\cup\Omega^+}<\lambda_{1,\Omega^0}$. Thus, in this standard situation, the spectral inequality in {\rm (M)}
is automatic: the positive phase genuinely enlarges the zero phase.
The connectedness assumption is important. If the positive phase is
separated from \(\Omega^0\) by a region where \(m<0\), then
\(\Omega^0\cup\Omega^+\) may be disconnected. For instance, in one
dimension, if $\Omega^0=(-1,1),\qquad I_\varepsilon=(2,2+\varepsilon)$,
with a negative region between them, then
\[
\lambda_{1,\Omega^0}=\frac{\pi^2}{4},
\qquad
\lambda_{1,I_\varepsilon}=\frac{\pi^2}{\varepsilon^2}.
\]
For \(0<\varepsilon<2\), the added positive component has a larger first
eigenvalue and hence the lowest first eigenvalue of the
disconnected union. Condition {\rm (M)} excludes precisely this
geometric--spectral decoupling. See \cite{Henrot,Davies}.
\end{rem}

\begin{lemma}
\label{lem:FritzJohn}
Let
\[
\mathcal R(u):=
\frac{\displaystyle\int_\Omega|\nabla u|^2\,dx}
{\displaystyle\int_\Omega|u|^2\,dx},
\qquad
\mathcal A:=
\{u\in H_0^1(\Omega)\setminus\{0\}:M(u)\geq0\}.
\]
Suppose that \(\bar u\in\mathcal A\) is a local minimizer of
\(\mathcal R\) on \(\mathcal A\). Then there exist
\(\mu_0,\mu_1\geq0\), with \(\mu_0+\mu_1>0\), such that
\begin{equation}\label{eq:FritzJohn}
	\mu_0D\mathcal R(\bar u)=\mu_1DM(\bar u),
\end{equation}
and
\begin{equation}\label{eq:complementary-slackness}
	\mu_1M(\bar u)=0.
\end{equation}
If \(DM(\bar u)\neq0\), then \(\mu_0>0\), and consequently there exists
\(\mu\geq0\) such that
\begin{equation}
	D\mathcal R(\bar u)=\mu\,DM(\bar u),
	\qquad
	\mu M(\bar u)=0.
	\label{eq:KKT}
\end{equation}
Thus, under the constraint qualification \(DM(\bar u)\neq0\), the
Fritz John conditions \cite{John} reduce to the usual Kuhn--Tucker
conditions.
\end{lemma}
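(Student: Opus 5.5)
The plan is to prove the Fritz John conditions for the one-sided constraint $M(u)\geq0$ by a first-order separation argument in the Hilbert space $H_0^1(\Omega)$.

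First I check the differentiability needed. $\mathcal R$ is $C^1$ on $H_0^1(\Omega)\setminus\{0\}$. Since $\alpha+1>1$, the map $s\mapsto|s|^{\alpha+1}$ is $C^1$ with derivative $(\alpha+1)|s|^{\alpha-1}s$. Together with the compact embedding $H_0^1\hookrightarrow L^{\alpha+1}$, this shows that $M$ is $C^1$ with
\[
DM(u)[h]=(\alpha+1)\int_\Omega m|u|^{\alpha-1}uh\,dx .
\]

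Next I split into two cases according to whether the constraint is active.

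\emph{Inactive constraint, $M(\bar u)>0$.} By continuity, $M>0$ on a neighbourhood of $\bar u$. Hence $\bar u$ is an unconstrained local minimizer of $\mathcal R$, so $D\mathcal R(\bar u)=0$. We take $\mu_0=1$ and $\mu_1=0$. Both identities hold, and $\mu=0$ works in (KKT).

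\emph{Active constraint, $M(\bar u)=0$.} Here complementary slackness is automatic. If $DM(\bar u)=0$, we take $\mu_0=0$ and $\mu_1=1$. So assume $DM(\bar u)\neq0$; the claim is that $D\mathcal R(\bar u)=\mu\,DM(\bar u)$ for some $\mu\geq0$. The key step is the implication
\[
DM(\bar u)[h]>0\ \Rightarrow\ D\mathcal R(\bar u)[h]\geq0 .
\]
To see it, take such an $h$. For small $t>0$, Taylor's formula gives $M(\bar u+th)=t\,DM(\bar u)[h]+o(t)>0$. For small $t$ we also have $\bar u+th\neq0$, so $\bar u+th\in\mathcal A$. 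Local minimality then gives $\mathcal R(\bar u+th)\geq\mathcal R(\bar u)$. Dividing by $t$ and letting $t\downarrow0$ yields $D\mathcal R(\bar u)[h]\geq0$.

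This implication extends by continuity to the closed half-space $\{DM(\bar u)[h]\geq0\}$. Here we use $DM(\bar u)\neq0$: any $h$ with $DM(\bar u)[h]=0$ is the limit of $h+\varepsilon h_0$, where $h_0$ is a fixed direction with $DM(\bar u)[h_0]>0$.

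It remains to pass from this implication to the multiplier, which is a Farkas-type step. Let $a=\nabla\mathcal R(\bar u)$ and $b=\nabla M(\bar u)$ be the Riesz representatives. First, $\langle a,h\rangle=0$ whenever $\langle b,h\rangle=0$, since both $h$ and $-h$ lie in the closed half-space. Therefore $a\in(b^\perp)^\perp=\operatorname{span}\{b\}$, so $a=\mu b$. Second, testing with $h=b$ gives $\mu\|b\|^2\geq0$, hence $\mu\geq0$. Taking $\mu_0=1$ and $\mu_1=\mu$ gives (FJ).

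Finally, the reduction claim in the lemma. Suppose $DM(\bar u)\neq0$ and (FJ) held with $\mu_0=0$. Then $\mu_1>0$ and $\mu_1DM(\bar u)=0$, a contradiction. Hence $\mu_0>0$, and dividing (FJ) by $\mu_0$ gives (KKT) with $\mu=\mu_1/\mu_0\geq0$ and $\mu M(\bar u)=0$.

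The main obstacle is the Farkas step, which here reduces to the elementary orthogonality argument above because there is only one constraint. The only delicate point is the strict positivity $M(\bar u+th)>0$, which is what guarantees that the perturbed functions stay feasible.
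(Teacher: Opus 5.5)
Your proof is correct, but it takes a different route from the paper. The paper does not derive the Fritz John conditions. It invokes the Fritz John theorem for the single constraint $M(u)\geq0$ as a black box, citing a general optimization reference and using only that $\mathcal R$ and $M$ are $C^1$ on $H_0^1(\Omega)\setminus\{0\}$. It then proves just the final reduction ($DM(\bar u)\neq0$ forces $\mu_0>0$), exactly as in your last paragraph.

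You instead build the multipliers from first principles:
\begin{itemize}
\item a case split on whether the constraint is active;
\item a first-order feasibility argument giving $D\mathcal R(\bar u)[h]\geq0$ on the open half-space $\{DM(\bar u)[h]>0\}$;
\item closure to the closed half-space via a fixed direction $h_0$;
\item an annihilator argument in place of a general Farkas lemma: $D\mathcal R(\bar u)$ vanishes on $\ker DM(\bar u)$, hence is a multiple of $DM(\bar u)$, and testing with $h=\nabla M(\bar u)$ gives the sign.
\end{itemize}

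The paper's version is shorter and fits a general multi-constraint framework. Yours is self-contained: you never have to check the hypotheses of an abstract Fritz John theorem in the infinite-dimensional space $H_0^1(\Omega)$. The only functional-analytic fact you use is that a linear functional vanishing on the kernel of a nonzero one is a multiple of it, so Riesz representation is not even essential. Your argument also makes transparent that the degenerate multiplier $\mu_0=0$ is needed only when $M(\bar u)=0$ and $DM(\bar u)=0$. That is precisely the situation the paper later has to exclude by hand, via condition (M), in Theorem~\ref{thm:ExisExtr}\,iii).

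One minor remark: for the $C^1$ regularity of $M$ you only need the continuous embedding $H_0^1(\Omega)\hookrightarrow L^{\alpha+1}(\Omega)$. You also need the continuity of the Nemytskii map $u\mapsto|u|^{\alpha-1}u$ from $L^{\alpha+1}(\Omega)$ into $L^{(\alpha+1)/\alpha}(\Omega)$. Compactness of the embedding plays no role here.
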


\proof
Since \(\mathcal R\) and \(M\) are continuously Fr\'echet differentiable on
\(H_0^1(\Omega)\setminus\{0\}\), the Fritz John theorem applied to the
constraint \(M(u)\geq0\) gives \eqref{eq:FritzJohn} together with
\eqref{eq:complementary-slackness}; see, e.g., \cite{Boyd}.

If \(DM(\bar u)\neq0\) and \(\mu_0=0\), then
\eqref{eq:FritzJohn} and \(\mu_0+\mu_1>0\) imply
\(\mu_1>0\) and hence \(DM(\bar u)=0\), a contradiction.
Thus \(\mu_0>0\). Dividing \eqref{eq:FritzJohn} by \(\mu_0\) and setting
\(\mu=\mu_1/\mu_0\) gives \eqref{eq:KKT}.$\fin$

\begin{theo}
\label{thm:ExisExtr}
Assume that $|\Omega^0\cup\Omega^+|>0$, that the standing domain
regularity assumptions hold, and
\[
\int_\Omega m(x)\varphi_{1,\Omega}^{\alpha+1}\,dx<0.
\]
Then the following assertions hold.	
\begin{itemize}
\item[i)]
Problem \eqref{OuyangRepeated} has a nonzero non-negative minimizer
\(\overline u\in H_0^1(\Omega)\). Moreover, $H_{\lambda^*}(\overline u)=0, M(\overline u)=0$.
\item[ii)]
If \(|\Omega^+|=0\), then
\(\lambda^*=\lambda_{1,\Omega^0}\), and \(\overline u\) is a positive
multiple of \(\varphi_{1,\Omega^0}\), extended by zero outside
\(\Omega^0\).

\item[iii)]
If, in addition, condition \textbf{(M)} holds, then there exists
\(\tau>0\) such that \(\tau\overline u\) is a weak solution of
\eqref{Eq1} for \(\lambda=\lambda^*\), and $E_{\lambda^*}''(\tau\overline u)=0$.		
\end{itemize}
\end{theo}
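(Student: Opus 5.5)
Part i) will be proved by the direct method. Take a minimizing sequence $u_n\in\mathcal A$ normalized by $\|u_n\|_2=1$. Replacing $u_n$ by $|u_n|$ changes neither $\mathcal R$ nor $M$, so the $u_n$ may be assumed non-negative. The sequence is bounded in $H_0^1$, so after extraction $u_n\rightharpoonup\overline u$ weakly in $H_0^1$ and strongly in $L^2$ and in $L^{\alpha+1}$. Hence $\|\overline u\|_2=1$ and $\overline u\geq0$. Since $M$ is continuous in $L^{\alpha+1}$, we get $M(\overline u)\geq0$, and weak lower semicontinuity of the Dirichlet norm gives $\mathcal R(\overline u)\leq\lambda^*$. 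Thus $\overline u$ is a minimizer, and $H_{\lambda^*}(\overline u)=0$.

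It remains to show $M(\overline u)=0$. Suppose instead that $M(\overline u)>0$. Then the constraint is inactive near $\overline u$, so $\overline u$ is a local minimizer of $\mathcal R$ in an $H_0^1$-neighbourhood. It is therefore a critical point of $\mathcal R$, i.e.\ a non-negative Dirichlet eigenfunction of $\Omega$ with eigenvalue $\lambda^*$. The only non-negative eigenfunctions are multiples of $\varphi_{1,\Omega}$, so $\overline u=c\varphi_{1,\Omega}$. But then $M(\overline u)=c^{\alpha+1}\int m\varphi_{1,\Omega}^{\alpha+1}<0$, a contradiction. Hence $M(\overline u)=0$. (Lemma~\ref{lem:FritzJohn} with $\mu_1=0$ expresses the same fact in the inactive case.)

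For ii), assume $|\Omega^+|=0$, so $m\leq0$. Then $M(u)\geq0$ forces $m|u|^{\alpha+1}=0$ a.e., i.e.\ $u=0$ a.e.\ on $\{m<0\}$. By the standing assumption $Z_m=\overline{\Omega^0}$ up to a null set, $u$ vanishes a.e.\ outside $\overline{\Omega^0}$. For a Lipschitz (or $C^{1,1}$) domain such a $u$ belongs to $H_0^1(\Omega^0)$; this is the standard characterization of $H_0^1$ by zero extension. Conversely, every zero extension of an element of $H_0^1(\Omega^0)$ lies in $\mathcal A$. So $\lambda^*=\lambda_{1,\Omega^0}$. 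Moreover, since $\Omega^0$ is connected, the minimizer is a non-negative first eigenfunction there, hence a positive multiple of $\widetilde\varphi_{1,\Omega^0}$.

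For iii), use Lemma~\ref{lem:FritzJohn} at $\overline u$, where $M(\overline u)=0$, so the constraint is active and $\mu\geq0$ is unrestricted by slackness. First, the constraint qualification $DM(\overline u)\neq0$ must be checked. We have $DM(\overline u)[\overline u]=(\alpha+1)M(\overline u)=0$, so testing with $\overline u$ gives nothing. Instead, test with $\varphi\geq0$ supported in a region where $m\overline u^\alpha\neq0$. Such a region exists unless $m\overline u=0$ a.e.

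That degenerate case is where (M) enters. If $m\overline u=0$ a.e., then $\overline u$ is supported in $\Omega^0$, since $m=0$ only there. Then $\mathcal R(\overline u)\geq\lambda_{1,\Omega^0}$. On the other hand, the extended eigenfunction of $\Omega^0\cup\Omega^+$ has $M\geq0$, so $\lambda^*\leq\lambda_{1,\Omega^0\cup\Omega^+}<\lambda_{1,\Omega^0}$, a contradiction.

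Once $\mu_0>0$ is secured, the Kuhn--Tucker relation with the normalization $\|\overline u\|_2=1$ reads
$$-\Delta\overline u-\lambda^*\overline u=\tfrac{\mu(\alpha+1)}2\, m\,\overline u^{\alpha}$$
weakly. Testing with $\overline u$ is consistent, since both sides vanish. Next show $\mu>0$. If $\mu=0$, then $\overline u$ would be a non-negative eigenfunction of $\Omega$, hence $c\varphi_{1,\Omega}$ with $M<0$, which is impossible.

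Finally, choose $\tau>0$ with $\tau^{1-\alpha}=\mu(\alpha+1)/2$. By homogeneity, $\tau\overline u$ solves \eqref{Eq1} at $\lambda=\lambda^*$. Also
$$E''_{\lambda^*}(\tau\overline u)=\tau^2H_{\lambda^*}(\overline u)-\alpha\tau^{\alpha+1}M(\overline u)=0.$$

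The main obstacle is verifying the constraint qualification, i.e.\ excluding that $\overline u$ lives entirely in the zero set. This is exactly where (M) and the connectedness of $\Omega^0\cup\Omega^+$ must be used. A further subtlety is that $M$ is only $C^1$ where $|u|^{\alpha-1}u$ is harmless; differentiability is fine for $\alpha+1>1$, since $t\mapsto|t|^{\alpha+1}$ is $C^1$.
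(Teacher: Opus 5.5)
Your proposal is correct and follows essentially the paper's route. You use the direct method for the minimizer and exclude the inactive case $M(\overline u)>0$ (equivalently $\mu_1=0$ in Lemma~\ref{lem:FritzJohn}) using $\int_\Omega m\varphi_{1,\Omega}^{\alpha+1}\,dx<0$. You place $\overline u$ in $H_0^1(\Omega^0)$ via the zero-set assumption for ii), and use condition \textbf{(M)} precisely to secure the constraint qualification $DM(\overline u)\neq0$ in iii). One arithmetic slip: from $-\Delta\overline u-\lambda^*\overline u=\tfrac{\mu(\alpha+1)}{2}m\overline u^{\alpha}$ the correct scaling is $\tau^{1-\alpha}=\tfrac{2}{\mu(\alpha+1)}$, not $\mu(\alpha+1)/2$ (compare the paper's $\tau=(2\mu_0/((\alpha+1)\mu_1))^{1/(1-\alpha)}$); this does not affect the existence of $\tau>0$.
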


\proof
We divide the proof into three steps.

\medskip
\noindent
$i)$
The admissible set in \eqref{OuyangRepeated} is nonempty, since the first
eigenfunction of \(-\Delta\) in \(\Omega^0\cup\Omega^+\), extended by
zero to \(\Omega\), satisfies
\(M(\varphi_{1,\Omega^0\cup\Omega^+})\geq0\). Hence
\begin{equation}\label{eq:ineqEig}
	\lambda^*\leq\lambda_{1,\Omega^0\cup\Omega^+}.
\end{equation}	
Let \((u_n)\) be a minimizing sequence normalized by
\(\|u_n\|_{L^2(\Omega)}=1\). Then
\(\int_\Omega|\nabla u_n|^2\,dx\to\lambda^*\), so \((u_n)\) is bounded
in \(H_0^1(\Omega)\). Up to a subsequence,
\[
u_n\rightharpoonup\overline u \quad\text{in }H_0^1(\Omega),
\qquad
u_n\to\overline u \quad\text{in }L^r(\Omega),\quad 1\leq r<2^*.
\]
In particular, \(\|\overline u\|_{L^2(\Omega)}=1\), hence
\(\overline u\neq0\). Since \(m\in L^\infty(\Omega)\),
\(M(u_n)\to M(\overline u)\), so \(M(\overline u)\geq0\).
Weak lower semicontinuity gives
\[
\mathcal R(\overline u)
\leq\liminf_{n\to\infty}\mathcal R(u_n)=\lambda^*.
\]
By the definition of \(\lambda^*\), equality holds. Thus
\(\overline u\) is a minimizer,
\(H_{\lambda^*}(\overline u)=0\), and, replacing it by
\(|\overline u|\), we may assume \(\overline u\geq0\).

It remains to show that \(M(\overline u)=0\).
By Lemma~\ref{lem:FritzJohn}, there exist
\(\mu_0,\mu_1\geq0\), \(\mu_0+\mu_1>0\), such that
\begin{equation}\label{eq:67}
	\mu_0D\mathcal R(\overline u)=\mu_1DM(\overline u).
\end{equation}
Since \(\mathcal R(\overline u)=\lambda^*\),
\[
D\mathcal R(\overline u)[v]
=
\frac{1}{\|\overline u\|_{L^2(\Omega)}^2}
DH_{\lambda^*}(\overline u)[v].
\]
Absorbing the positive factor into \(\mu_0\), we obtain
\begin{equation}\label{eq:67H}
	\mu_0DH_{\lambda^*}(\overline u)
	=
	\mu_1DM(\overline u).
\end{equation}
Equivalently,
\[
2\mu_0(-\Delta\overline u-\lambda^*\overline u)
=
(\alpha+1)\mu_1m(x)|\overline u|^{\alpha-1}\overline u
\]
in the weak sense.
By complementary slackness (see, e.g., \cite{Boyd}), \(\mu_1M(\overline u)=0\). If \(\mu_1=0\), then \(\mu_0>0\) and
\(-\Delta\overline u=\lambda^*\overline u\) in \(\Omega\).
Since \(\overline u\geq0\) and \(\overline u\not\equiv0\), the strong
maximum principle yields
\(\lambda^*=\lambda_{1,\Omega}\) and
\(\overline u=c\varphi_{1,\Omega}\), contradicting
\[
\int_\Omega m(x)\varphi_{1,\Omega}^{\alpha+1}\,dx<0.
\]
Hence \(\mu_1>0\), and therefore \(M(\overline u)=0\).
\par
\noindent
$ii)$
If \(|\Omega^+|=0\), then \(m\leq0\) a.e. in \(\Omega\).
Since \(M(\overline u)=0\), we have
\(\overline u=0\) a.e. on \(\{m<0\}\).
By the standing zero-set and regularity assumptions,
\(\overline u\in H_0^1(\Omega^0)\). Hence
\[
\lambda_{1,\Omega^0}
\leq
\frac{\displaystyle\int_{\Omega^0}|\nabla\overline u|^2\,dx}
{\displaystyle\int_{\Omega^0}|\overline u|^2\,dx}
=
\lambda^*.
\]
Conversely, \(\varphi_{1,\Omega^0}\) is admissible in
\eqref{OuyangRepeated}, so
\(\lambda^*\leq\lambda_{1,\Omega^0}\).
Thus \(\lambda^*=\lambda_{1,\Omega^0}\), and simplicity of the first
eigenvalue gives
\(\overline u=c\varphi_{1,\Omega^0}\) for some \(c>0\).

\medskip
\noindent
$iii)$
We first show that \(\mu_0>0\). Otherwise \(\mu_1>0\), and
\eqref{eq:67H} gives \(DM(\overline u)=0\), hence
\(\overline u=0\) a.e. on \(\{m\neq0\}\).
If \(|\Omega^0|=0\), this contradicts \(\overline u\neq0\).
If \(|\Omega^0|>0\), the standing zero-set and regularity assumptions give
\(\overline u\in H_0^1(\Omega^0)\), and
\(H_{\lambda^*}(\overline u)=0\) implies $\lambda_{1,\Omega^0}
\leq\lambda^*
\leq\lambda_{1,\Omega^0\cup\Omega^+}$, contrary to condition \textbf{(M)}. Thus \(\mu_0>0\).
Since also \(\mu_1>0\), set
\[
\tau
:=
\left(
\frac{2\mu_0}{(\alpha+1)\mu_1}
\right)^{\frac{1}{1-\alpha}}.
\]
Then \eqref{eq:67H} implies that \(\tau\overline u\) is a weak solution
of \eqref{Eq1} for \(\lambda=\lambda^*\).
Finally,
\(H_{\lambda^*}(\overline u)=M(\overline u)=0\), and homogeneity yields
\(H_{\lambda^*}(\tau\overline u)=M(\tau\overline u)=0\). Therefore $E_{\lambda^*}''(\tau\overline u)=0$. This completes the proof.$\fin$

\section{The case $\protect\lambda<\protect\lambda^*$}
Let
\[
S:=\{v\in H_0^1(\Omega):\|v\|=1\}.
\]
We introduce
\[
\Theta^+:=\{v\in H_0^1(\Omega)\setminus\{0\}:M(v)>0\},
\]
and, for $\lambda>\lambda_{1,\Omega}$,
\[
\Theta_\lambda^-:=
\{v\in H_0^1(\Omega)\setminus\{0\}:H_\lambda(v)<0\}.
\]
Clearly, $\Theta^+\neq\emptyset$ if $|\Omega^+|>0$, while
$\Theta_\lambda^-\neq\emptyset$ for every $\lambda>\lambda_{1,\Omega}$,
since
\[
H_\lambda(\varphi_{1,\Omega})
=
(\lambda_{1,\Omega}-\lambda)
\int_\Omega\varphi_{1,\Omega}^2\,dx<0.
\]

If $v\in\Theta^+$ and $\lambda<\lambda^*$, then \eqref{eq:21} gives
$H_\lambda(v)>0$. Similarly, if
$v\in\Theta_\lambda^-$ and
$\lambda_{1,\Omega}<\lambda<\lambda^*$, then~\eqref{eq:22} gives
$M(v)<0$.

In either case, the fibering equation
\begin{equation}\label{3.6}
\phi_v'(t)=tH_\lambda(v)-t^\alpha M(v)=0
\end{equation}
has the unique positive solution
\begin{equation}\label{eq:t}
t_\lambda(v)
=
\left(\frac{M(v)}{H_\lambda(v)}\right)^{\frac{1}{1-\alpha}},
\end{equation}
where the quotient is positive. Hence
\begin{equation}\label{eq:NehFib}
\mathcal N_\lambda^\pm
=
\{t_\lambda(v)v:v\in\Theta_\lambda^\pm\cap S\},
\end{equation}
with $\Theta_\lambda^+:=\Theta^+$. Set
\[
c_\alpha:=\frac{1-\alpha}{2(1+\alpha)}.
\]
For $v\in\Theta^+$, define
\begin{equation}\label{j2}
J_\lambda^+(v)
:=
E_\lambda(t_\lambda(v)v)
=
-c_\alpha
\frac{M(v)^{\frac{2}{1-\alpha}}}
{H_\lambda(v)^{\frac{1+\alpha}{1-\alpha}}},
\qquad
\lambda<\lambda^*,
\end{equation}
whereas, for $v\in\Theta_\lambda^-$,
\begin{equation}\label{j22}
J_\lambda^-(v)
:=
E_\lambda(t_\lambda(v)v)
=
c_\alpha
\frac{|M(v)|^{\frac{2}{1-\alpha}}}
{|H_\lambda(v)|^{\frac{1+\alpha}{1-\alpha}}},
\qquad
\lambda_{1,\Omega}<\lambda<\lambda^*.
\end{equation}
Both functionals are homogeneous of degree zero. We therefore consider
\begin{align}
\widehat J_\lambda^+
&:=
\inf_{v\in\Theta^+\cap S}J_\lambda^+(v),
\qquad
\lambda<\lambda^*,
\label{jmin1}\\
\widehat J_\lambda^-
&:=
\inf_{v\in\Theta_\lambda^-\cap S}J_\lambda^-(v),
\qquad
\lambda_{1,\Omega}<\lambda<\lambda^*.
\label{jmin2}
\end{align}
If $|\Omega^+|>0$, then $\widehat J_\lambda^+<0$.
Moreover, if
$\int_\Omega m(x)\varphi_{1,\Omega}^{\alpha+1}\,dx<0$, then
$\widehat J_\lambda^-<+\infty$ for every
$\lambda\in(\lambda_{1,\Omega},\lambda^*)$.

\begin{prop}
\label{prop:crJE2}
\
\begin{itemize}
\item[i)]
If $|\Omega^+|>0$, $\lambda<\lambda^*$, and
$v_\lambda^+\in\Theta^+\cap S$ minimizes \eqref{jmin1}, then
$u_\lambda^+:=t_\lambda(v_\lambda^+)v_\lambda^+$ is a non-negative weak
solution of \eqref{Eq1}. Moreover,
\[
E_\lambda(u_\lambda^+)
=
\widehat E_\lambda^+
=
\widehat J_\lambda^+<0,
\qquad
E_\lambda''(u_\lambda^+)>0.
\]
\item[ii)]
Assume that $|\Omega^0\cup\Omega^+|>0$ and
\[
\int_\Omega m(x)\varphi_{1,\Omega}^{\alpha+1}\,dx<0.
\]
If $\lambda\in(\lambda_{1,\Omega},\lambda^*)$ and
$v_\lambda^-\in\Theta_\lambda^-\cap S$ minimizes \eqref{jmin2}, then
$u_\lambda^-:=t_\lambda(v_\lambda^-)v_\lambda^-$ is a non-negative weak
solution of \eqref{Eq1}. Moreover,
\[
E_\lambda(u_\lambda^-)
=
\widehat E_\lambda^-
=
\widehat J_\lambda^->0,
\qquad
E_\lambda''(u_\lambda^-)<0.
\]
\end{itemize}
\end{prop}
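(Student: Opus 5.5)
The argument is the standard one for minimizers of zero-homogeneous fibering functionals: a minimizer of $J_\lambda^\pm$ on the open cone $\Theta_\lambda^\pm$ is a free critical point, and its critical-point equation says that $t_\lambda(v)v$ solves \eqref{Eq1}. I treat i) and ii) together, writing $v:=v_\lambda^\pm$ and $t:=t_\lambda(v)$.

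\emph{Non-negativity.} Both $H_\lambda$ and $M$ are invariant under $v\mapsto|v|$, since $\int|\nabla|v||^2=\int|\nabla v|^2$. Hence $|v|$ lies in the same set $\Theta_\lambda^\pm\cap S$ and is also a minimizer. We may therefore replace $v$ by $|v|$ and assume $v\geq0$.

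\emph{Free critical point.} The sets $\Theta^+$ and $\Theta_\lambda^-$ are open in $H_0^1(\Omega)\setminus\{0\}$, because $H_\lambda$ and $M$ are continuous. Both $J_\lambda^\pm$ are homogeneous of degree zero, so $v$ is a local minimizer of $J_\lambda^\pm$ on an open subset of $H_0^1\setminus\{0\}$, not merely on $S$. Consequently $DJ_\lambda^\pm(v)=0$, with no Lagrange multiplier.

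Since $M(v)/H_\lambda(v)>0$ in both cases, we may write uniformly
\[
J_\lambda^\pm(v)=\mp c_\alpha\,\frac{|M(v)|^{\frac{2}{1-\alpha}}}{|H_\lambda(v)|^{\frac{1+\alpha}{1-\alpha}}}.
\]
Differentiating in a direction $w$, $DJ_\lambda^\pm(v)=0$ reduces to
\[
\frac{2}{1-\alpha}\,\frac{DM(v)[w]}{M(v)}=\frac{1+\alpha}{1-\alpha}\,\frac{DH_\lambda(v)[w]}{H_\lambda(v)},
\]
that is,
\[
DH_\lambda(v)[w]=\frac{2}{1+\alpha}\,\frac{H_\lambda(v)}{M(v)}\,DM(v)[w].
\]

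\emph{Solution property.} Recall $DH_\lambda(v)[w]=2\int(\nabla v\nabla w-\lambda vw)$ and $DM(v)[w]=(\alpha+1)\int m|v|^{\alpha-1}vw$. The identity above therefore becomes
\[
\int(\nabla v\nabla w-\lambda vw)=\frac{H_\lambda(v)}{M(v)}\int m|v|^{\alpha-1}vw .
\]
Now multiply by $t$ and use the homogeneity $|tv|^{\alpha-1}tv=t^\alpha|v|^{\alpha-1}v$. Since $t^{1-\alpha}=M(v)/H_\lambda(v)$ by \eqref{eq:t}, this gives
\[
\int(\nabla(tv)\nabla w-\lambda tvw)=\int m|tv|^{\alpha-1}tv\,w
\]
for all $w$. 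Thus $u_\lambda^\pm=tv\geq0$ is a weak solution, and Lemma~\ref{Lemm3.3} provides its regularity.

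\emph{Second derivative and energy sign.} On $\mathcal N_\lambda$ we have $H_\lambda(u)=M(u)$, so
\[
E_\lambda''(u)=(1-\alpha)H_\lambda(u)=(1-\alpha)t^2H_\lambda(v).
\]
This is positive for $v\in\Theta^+$, using \eqref{eq:21}, and negative for $v\in\Theta_\lambda^-$. The signs of $E_\lambda(u_\lambda^\pm)=J_\lambda^\pm(v)$ can be read off from \eqref{j2} and \eqref{j22}: negative in i), positive in ii).

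\emph{Equality of the levels.} The identity $\widehat E_\lambda^\pm=\widehat J_\lambda^\pm$ follows from \eqref{eq:NehFib}. Every $u\in\mathcal N_\lambda^\pm$ can be written as $t_\lambda(w)w$ with $w=u/\|u\|\in\Theta_\lambda^\pm\cap S$, so $E_\lambda(u)=J_\lambda^\pm(w)$. Conversely, each such $w$ yields an element of $\mathcal N_\lambda^\pm$. The infima therefore coincide, and both are attained at $u_\lambda^\pm$.

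\emph{Main obstacle.} The step I expect to need the most care is justifying that $J_\lambda^\pm$ is $C^1$ near $v$, so that the free-critical-point argument applies. The map $M$ is $C^1$ on $H_0^1$ because $\alpha+1>1$. The quotients stay away from singularities because $H_\lambda(v)\neq0$ and $M(v)\neq0$ on $\Theta_\lambda^\pm$: in case ii), $H_\lambda(v)<0$ forces $M(v)<0$ by \eqref{eq:22}, so the expression is smooth there.

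Finally, a small check on the non-negativity step: replacing $v$ by $|v|$ does not change $\|v\|$, so $|v|$ indeed stays in $S$.
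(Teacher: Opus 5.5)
Your proof is correct and follows the paper's route: the paper only invokes the fibering representation \eqref{eq:NehFib} and the standard natural-constraint argument, replaces the minimizer by its absolute value, and leaves the details to a reference, which you supply in full. The only difference is presentational: you get the Euler--Lagrange equation as a free critical point of the zero-homogeneous reduced functional $J_\lambda^\pm$ on the open cone $\Theta_\lambda^\pm$, rather than through a Lagrange-multiplier argument on $\mathcal N_\lambda^\pm$, and the two are equivalent here.
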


\proof
The proof follows from the fibering representation \eqref{eq:NehFib} and
the standard natural-constraint argument for the Nehari manifold; see,
e.g., \cite{DiazHernandezIlyasov2026}. Since the minimizers may be replaced
by their absolute values, the resulting weak solutions are non-negative. $\fin$

\begin{lemma}
\label{ExisHomMin}
The following assertions hold.
\begin{itemize}
\item[i)]
If $|\Omega^+|>0$, then for every $\lambda<\lambda^*$ there exists a
minimizer $v_\lambda^+\in\Theta^+\cap S$ of \eqref{jmin1}.

\item[ii)]
Assume that $|\Omega^0\cup\Omega^+|>0$ and
\[
\int_\Omega m(x)\varphi_{1,\Omega}^{\alpha+1}\,dx<0.
\]
Then, for every $\lambda\in(\lambda_{1,\Omega},\lambda^*)$, there exists
a minimizer $v_\lambda^-\in\Theta_\lambda^-\cap S$ of \eqref{jmin2}.
\end{itemize}
\end{lemma}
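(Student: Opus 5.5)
We use the direct method on the unit sphere $S$, treating the two parts separately. In each part the central task is to show that a minimizing sequence does not drift to the boundary of the admissible cone, where $H_\lambda$ or $M$ vanishes.

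\emph{Part i).} Fix $\lambda<\lambda^*$ and let $(v_n)\subset\Theta^+\cap S$ be a minimizing sequence for \eqref{jmin1}. We may replace $v_n$ by $|v_n|$, since $J_\lambda^+$ depends only on $|v_n|$. Up to a subsequence,
\[
v_n\rightharpoonup v \text{ in } H_0^1(\Omega), \qquad v_n\to v \text{ in } L^2(\Omega)\cap L^{\alpha+1}(\Omega),
\]
so $M(v_n)\to M(v)$ and $\int v_n^2\to\int v^2$.

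Since $\widehat J_\lambda^+<0$, we have $J_\lambda^+(v_n)\le -\delta<0$ for some $\delta>0$. On $S$ the quantity $H_\lambda(v_n)$ is bounded above, because $H_\lambda(v_n)\le 1+|\lambda|\|v_n\|_2^2$. Hence $M(v_n)^{2/(1-\alpha)}\ge \delta\, c_\alpha^{-1} H_\lambda(v_n)^{(1+\alpha)/(1-\alpha)}$.

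We next need $H_\lambda(v_n)$ bounded away from $0$. Suppose instead $H_\lambda(v_n)\to0$. Then $v\neq0$, because $\lambda\int v^2=\lim(1-H_\lambda(v_n))=1$. Moreover $M(v)\ge0$ and $H_\lambda(v)\le\liminf H_\lambda(v_n)=0$, which contradicts \eqref{eq:21}.

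So $H_\lambda(v_n)\ge c>0$, and therefore $M(v)=\lim M(v_n)\ge c'>0$. In particular $v\in\Theta^+$ and $H_\lambda(v)>0$ by \eqref{eq:21}. By weak lower semicontinuity $H_\lambda(v)\le\liminf H_\lambda(v_n)$. Since $J_\lambda^+$ is increasing in $H_\lambda$ when $M>0$ is fixed, we get $J_\lambda^+(v)\le\liminf J_\lambda^+(v_n)=\widehat J_\lambda^+$. Zero-homogeneity lets us normalize $v/\|v\|\in\Theta^+\cap S$, and this is the minimizer.

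\emph{Part ii).} Fix $\lambda\in(\lambda_{1,\Omega},\lambda^*)$ and take a non-negative minimizing sequence $(v_n)\subset\Theta_\lambda^-\cap S$, with the same weak limit $v$. Finiteness of $\widehat J_\lambda^-$ gives
\[
|M(v_n)|^{2/(1-\alpha)}\le C\,|H_\lambda(v_n)|^{(1+\alpha)/(1-\alpha)}.
\]
The main obstacle is excluding $H_\lambda(v_n)\to0$.

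If $H_\lambda(v_n)\to0$, the displayed bound forces $M(v_n)\to0$. As in part i), $\lambda\int v^2=1$, so $v\ne0$, and also $M(v)=0$ and $H_\lambda(v)\le0$. This contradicts \eqref{eq:22}, which requires $M(v)<0$. Hence $H_\lambda(v_n)\le -c<0$.

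It follows that $H_\lambda(v)\le -c<0$, so $v\in\Theta_\lambda^-$, and \eqref{eq:22} gives $M(v)<0$. Lower semicontinuity gives $|H_\lambda(v)|\ge\limsup|H_\lambda(v_n)|$, while $|M(v)|=\lim|M(v_n)|$. Since $J_\lambda^-$ is decreasing in $|H_\lambda|$, this yields $J_\lambda^-(v)\le\liminf J_\lambda^-(v_n)$. Normalizing $v$ to $S$ by zero-homogeneity gives the minimizer.
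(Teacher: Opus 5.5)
Your proposal is correct and follows essentially the paper's proof. Both use the direct method on $S$, rule out degeneration of $H_\lambda(v_n)$ or $M(v_n)$ via \eqref{eq:21}--\eqref{eq:22} together with the sign or bound on the energy, apply weak lower semicontinuity of $J_\lambda^\pm$, and renormalize by zero-homogeneity. The differences are minor. In part i) you obtain $H_\lambda(v_n)\geq c>0$ by a compactness contradiction, whereas the paper reads it off the definition of $\lambda^*$: $\|v_n\|_2^2\leq 1/\lambda^*$, so $H_\lambda(v_n)\geq 1-\lambda/\lambda^*$, or $\geq 1$ if $\lambda<0$. In part ii) you exclude $H_\lambda(v_n)\to0$ before deducing $M(v)<0$, which is the reverse of the paper's order.
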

\proof
$i)$
Fix $\lambda<\lambda^*$ and let
$(v_n)\subset\Theta^+\cap S$ be a minimizing sequence for \eqref{jmin1}.
Thus $\|v_n\|=1$, $M(v_n)>0$, and
\[
J_\lambda^+(v_n)\longrightarrow\widehat J_\lambda^+<0.
\]
Up to a subsequence, there exists $v_\lambda^+\in H_0^1(\Omega)$ such that
\[
v_n\rightharpoonup v_\lambda^+
\quad\text{in }H_0^1(\Omega),
\qquad
v_n\to v_\lambda^+
\quad\text{in }L^q(\Omega),\quad 1\leq q<2^*.
\]
In particular, $M(v_n)\to M(v_\lambda^+)$.	
We first obtain a uniform lower bound for $H_\lambda(v_n)$. Since
$M(v_n)>0$, each $v_n$ is admissible in the definition of $\lambda^*$,
and hence
\[
\lambda^*
\leq
\frac{\displaystyle\int_\Omega|\nabla v_n|^2\,dx}
{\displaystyle\int_\Omega|v_n|^2\,dx}.
\]
Because $\|v_n\|=1$, we have $\|v_n\|_2^2\leq1/\lambda^*$.
Consequently,
\[
H_\lambda(v_n)
\geq
\begin{cases}
	1-\lambda/\lambda^*, & 0\leq\lambda<\lambda^*,\\
	1, & \lambda<0.
\end{cases}
\]
Thus there exists $c_\lambda>0$, independent of $n$, such that
\begin{equation}\label{eq:H-positive-lower-bound}
	H_\lambda(v_n)\geq c_\lambda>0.
\end{equation}	
Suppose that $v_\lambda^+=0$. Then
$M(v_n)\to0$, and \eqref{j2} together with
\eqref{eq:H-positive-lower-bound} yields
$J_\lambda^+(v_n)\to0$, contradicting
$\widehat J_\lambda^+<0$. Hence $v_\lambda^+\neq0$.	
Since $M(v_n)>0$, we have $M(v_\lambda^+)\geq0$. If
$M(v_\lambda^+)=0$, the same argument again gives
$J_\lambda^+(v_n)\to0$, a contradiction. Therefore
$M(v_\lambda^+)>0$, so $v_\lambda^+\in\Theta^+$.	
By weak lower semicontinuity and strong $L^2$ convergence,
\[
H_\lambda(v_\lambda^+)
\leq
\liminf_{n\to\infty}H_\lambda(v_n).
\]
Since $M(v_n)\to M(v_\lambda^+)>0$, formula \eqref{j2} gives
\[
J_\lambda^+(v_\lambda^+)
\leq
\liminf_{n\to\infty}J_\lambda^+(v_n)
=
\widehat J_\lambda^+.
\]
If $\|v_\lambda^+\|<1$, normalize by setting
\[
\widetilde v_\lambda^+
:=
\frac{v_\lambda^+}{\|v_\lambda^+\|}.
\]
Since $J_\lambda^+$ is homogeneous of degree zero,
$\widetilde v_\lambda^+\in\Theta^+\cap S$ and
$J_\lambda^+(\widetilde v_\lambda^+)=J_\lambda^+(v_\lambda^+)$.
Hence
\[
\widehat J_\lambda^+
\leq
J_\lambda^+(\widetilde v_\lambda^+)
\leq
\widehat J_\lambda^+.
\]
Thus $\widetilde v_\lambda^+$ is a minimizer of \eqref{jmin1}.
Renaming it $v_\lambda^+$ proves $i)$.

\medskip
\noindent
$ii)$
The proof is analogous. Let
$(v_n)\subset\Theta_\lambda^-\cap S$ be a minimizing sequence.
Since $H_\lambda(v_n)<0$,
$\|v_n\|_2^2>1/\lambda$, and therefore its weak limit
$\bar v$ is nonzero. Strong $L^{\alpha+1}$ convergence gives
$M(v_n)\to M(\bar v)$.

If $M(\bar v)=0$, weak lower semicontinuity gives
$H_\lambda(\bar v)\leq0$, contradicting \eqref{eq:21}, since
$\lambda<\lambda^*$ and $\bar v\neq0$. Thus $M(\bar v)<0$.
Since the minimizing sequence has bounded energy and
$M(v_n)$ stays away from zero, \eqref{j22} shows that
$H_\lambda(v_n)$ stays uniformly away from zero, and consequently
$H_\lambda(\bar v)<0$. Formula \eqref{j22}, weak lower semicontinuity,
and the homogeneity of $J_\lambda^-$ then yield, after normalization,
a minimizer $v_\lambda^-\in\Theta_\lambda^-\cap S$ of \eqref{jmin2}.$\fin$

Combining Proposition~\ref{prop:crJE2} and Lemma~\ref{ExisHomMin}, we obtain
the following existence result.

\begin{coro}
\label{lem:extr}
Assume that $|\Omega^0\cup\Omega^+|>0$ and
\[
\int_\Omega m(x)\varphi_{1,\Omega}^{\alpha+1}\,dx<0.
\]	
\begin{itemize}
\item[i)]
If $|\Omega^+|>0$, then for every $\lambda<\lambda^*$ there exists a
non-negative weak solution $u_\lambda^+$ of \eqref{Eq1} such that
\[
E_\lambda(u_\lambda^+)<0,
\qquad
E_\lambda''(u_\lambda^+)>0.
\]
\item[ii)]
For every $\lambda\in(\lambda_{1,\Omega},\lambda^*)$, there exists a
non-negative weak solution $u_\lambda^-$ of \eqref{Eq1} such that
\[
E_\lambda(u_\lambda^-)>0,
\qquad
E_\lambda''(u_\lambda^-)<0.
\]
\end{itemize}
\end{coro}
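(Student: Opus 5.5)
The corollary follows by chaining the two preceding results, applied separately to the two parts. For part i), I fix $\lambda<\lambda^*$. Since $|\Omega^+|>0$, Lemma~\ref{ExisHomMin}~i) provides a minimizer $v_\lambda^+\in\Theta^+\cap S$ of \eqref{jmin1}. Proposition~\ref{prop:crJE2}~i) then applies directly to this minimizer. It shows that
\[
u_\lambda^+:=t_\lambda(v_\lambda^+)v_\lambda^+
\]
is a non-negative weak solution of \eqref{Eq1}, with $E_\lambda(u_\lambda^+)=\widehat J_\lambda^+<0$ and $E_\lambda''(u_\lambda^+)>0$. Here $t_\lambda$ is given by \eqref{eq:t}. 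It is well defined because $M(v_\lambda^+)>0$, and \eqref{eq:21} then gives $H_\lambda(v_\lambda^+)>0$.

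For part ii), I fix $\lambda\in(\lambda_{1,\Omega},\lambda^*)$. This interval is nonempty: the assumption $\int_\Omega m\varphi_{1,\Omega}^{\alpha+1}\,dx<0$ gives $\lambda^*>\lambda_{1,\Omega}$ by \eqref{eq:lambda-star-characterization}, and $|\Omega^0\cup\Omega^+|>0$ gives $\lambda^*<+\infty$. Lemma~\ref{ExisHomMin}~ii) then yields a minimizer $v_\lambda^-\in\Theta_\lambda^-\cap S$ of \eqref{jmin2}. The set $\Theta_\lambda^-$ is nonempty because $\varphi_{1,\Omega}\in\Theta_\lambda^-$. Proposition~\ref{prop:crJE2}~ii) then shows that
\[
u_\lambda^-:=t_\lambda(v_\lambda^-)v_\lambda^-
\]
is a non-negative weak solution with $E_\lambda(u_\lambda^-)=\widehat J_\lambda^->0$ and $E_\lambda''(u_\lambda^-)<0$.

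The signs of $E_\lambda''$ can be checked directly. On $\mathcal N_\lambda$ we have $H_\lambda=M$, so $E_\lambda''(u)=(1-\alpha)M(u)$. This quantity is positive on the $+$ branch, where $M>0$, and negative on the $-$ branch, where $M<0$.

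The argument is essentially bookkeeping, so there is no serious obstacle. The only point needing care is that the hypotheses of each invoked result are met. In part i), the standing assumption $\int_\Omega m\varphi_{1,\Omega}^{\alpha+1}\,dx<0$ is not needed at all. In part ii), the condition $|\Omega^+|>0$ is not required, which is consistent with the degenerate case $m\le0$.
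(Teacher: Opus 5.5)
Your proposal is correct and follows the paper's route: the paper obtains this corollary by combining Lemma~\ref{ExisHomMin}, which gives the minimizers $v_\lambda^\pm$, with Proposition~\ref{prop:crJE2}, which turns $t_\lambda(v_\lambda^\pm)v_\lambda^\pm$ into non-negative weak solutions with the stated energy signs. Your extra checks (that the interval $(\lambda_{1,\Omega},\lambda^*)$ is nonempty, and that $E_\lambda''(u)=(1-\alpha)M(u)$ on $\mathcal N_\lambda$ fixes the sign of $E_\lambda''$) are consistent with the paper's setup.
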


\begin{rem}
By Lemma~\ref{Lemm3.3},
$u_\lambda^-\in W^{2,p}(\Omega)$ for every $p<\infty$.
\end{rem}

\subsection{Dependence on the parameter $\protect\lambda$}

The next result is based on the comparison argument introduced in
\cite{Ilyasov1997}.

\begin{prop}
\label{prop:IneqParam}
Let $\lambda_0$ belong to a compact subinterval of
$(\lambda_{1,\Omega},\lambda^*)$ for the branch $u_\lambda^-$, or of
$(-\infty,\lambda^*)$ for the branch $u_\lambda^+$. Then there exist
$\delta>0$ and
$r_i^\pm(\lambda,\lambda_0)=o(|\lambda-\lambda_0|)$, $i=1,2$, such that,
whenever $0<\lambda-\lambda_0<\delta$,
\begin{align}
	-\frac{\lambda-\lambda_0}{2}\|u_\lambda^-\|_2^2+r_1^-(\lambda,\lambda_0)
	&\leq E_\lambda(u_\lambda^-)-E_{\lambda_0}(u_{\lambda_0}^-)
	\notag\\
	&\leq -\frac{\lambda-\lambda_0}{2}\|u_{\lambda_0}^-\|_2^2
	+r_2^-(\lambda,\lambda_0).
	\label{eq:j-j}
\end{align}
and
\begin{align}
	-\frac{\lambda-\lambda_0}{2}\|u_\lambda^+\|_2^2+r_1^+(\lambda,\lambda_0)
	&\leq E_\lambda(u_\lambda^+)-E_{\lambda_0}(u_{\lambda_0}^+)
	\notag\\
	&\leq -\frac{\lambda-\lambda_0}{2}\|u_{\lambda_0}^+\|_2^2
	+r_2^+(\lambda,\lambda_0).
	\label{eq:j-jj2}
\end{align}
Moreover, for every fixed
$\lambda_0\in(\lambda_{1,\Omega},\lambda^*)$, there exist
$\delta,c_0,C_0>0$ such that, whenever $|\lambda-\lambda_0|<\delta$,
\[
|H_\lambda(u_\lambda^-)|\geq c_0,
\qquad
\|u_\lambda^-\|\leq C_0.
\]	
\end{prop}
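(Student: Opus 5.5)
The plan is to compare the two minimization problems through test functions, following the comparison argument of \cite{Ilyasov1997}. Write $u_\lambda^\pm=t_\lambda(v_\lambda^\pm)v_\lambda^\pm$ with $v_\lambda^\pm\in S$ the minimizers from Lemma~\ref{ExisHomMin}. Since $\widehat E_\lambda^\pm=\widehat J_\lambda^\pm$ is an infimum, plugging $v_{\lambda_0}^\pm$ into $J_\lambda^\pm$ gives
\[
E_\lambda(u_\lambda^\pm)-E_{\lambda_0}(u_{\lambda_0}^\pm)\leq J_\lambda^\pm(v_{\lambda_0}^\pm)-J_{\lambda_0}^\pm(v_{\lambda_0}^\pm),
\]
and plugging $v_\lambda^\pm$ into $J_{\lambda_0}^\pm$ gives the reverse bound $\geq J_\lambda^\pm(v_\lambda^\pm)-J_{\lambda_0}^\pm(v_\lambda^\pm)$. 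It must first be checked that $v_{\lambda_0}^-\in\Theta_\lambda^-$ and $v_\lambda^-\in\Theta_{\lambda_0}^-$. The first is clear for $\lambda>\lambda_0$, since $H_\lambda$ decreases in $\lambda$. The second needs $H_{\lambda_0}(v_\lambda^-)<0$, which will come from the uniform bound $|H_\lambda(u_\lambda^-)|\geq c_0$ proved below. For the $+$ branch, $\Theta^+$ does not depend on $\lambda$, so nothing is needed.

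Next I would compute the $\lambda$-derivative of $J_\lambda^\pm(v)$ at fixed $v$. One has $\partial_\lambda H_\lambda(v)=-\|v\|_2^2$, and differentiating \eqref{j2}, \eqref{j22} gives
\[
\partial_\lambda J_\lambda^\pm(v)=-\tfrac12 t_\lambda(v)^2\|v\|_2^2=-\tfrac12\|t_\lambda(v)v\|_2^2,
\]
using $t_\lambda^{1-\alpha}=M/H_\lambda$ and $c_\alpha\frac{2(1+\alpha)}{1-\alpha}=1$. This is just the envelope identity $\partial_\lambda E_\lambda=-\frac12\|u\|_2^2$ at a Nehari point. A first-order Taylor expansion in $\lambda$ then produces the main terms $-\frac{\lambda-\lambda_0}{2}\|u_{\lambda_0}^\pm\|_2^2$ in the upper bound and $-\frac{\lambda-\lambda_0}{2}\|t_{\lambda_0}(v_\lambda^\pm)v_\lambda^\pm\|_2^2$ in the lower bound. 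In the lower bound, replacing $t_{\lambda_0}(v_\lambda)$ by $t_\lambda(v_\lambda)$ costs $O(|\lambda-\lambda_0|^2)$, provided $t$ is Lipschitz in $\lambda$ uniformly on the relevant set.

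Uniformity of the remainders is the main obstacle. The second derivative $\partial_\lambda^2J_\lambda(v)$ involves negative powers of $H_\lambda(v)$, and $\|v\|_2\leq\lambda_{1,\Omega}^{-1/2}$. So $r_i^\pm=o(|\lambda-\lambda_0|)$, in fact $O(|\lambda-\lambda_0|^2)$, once $|H_\mu(v)|$ is bounded below for $\mu$ between $\lambda_0$ and $\lambda$ and $v\in\{v_{\lambda_0},v_\lambda\}$.

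For the $+$ branch this lower bound is \eqref{eq:H-positive-lower-bound}, which holds uniformly on compact subsets of $(-\infty,\lambda^*)$. For the $-$ branch I would argue by contradiction. Suppose $\lambda_n\to\lambda_0$ and $H_{\lambda_n}(v_{\lambda_n}^-)\to0$. The weak limit $\bar v$ is nonzero, because $\|v_n\|_2^2>1/\lambda_n$, and lower semicontinuity gives $H_{\lambda_0}(\bar v)\leq0$, hence $M(\bar v)<0$ by \eqref{eq:22}. Then $|M(v_n)|$ stays bounded away from $0$, so $J_{\lambda_n}^-(v_n)\to+\infty$ by \eqref{j22}. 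This contradicts the a priori upper bound $\widehat J_{\lambda_n}^-\leq J_{\lambda_n}^-(v_{\lambda_0+\eta}^-)$, valid for a fixed slightly larger parameter $\lambda_0+\eta$ and bounded near $\lambda_0$.

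This contradiction argument yields $c_0$. The bound $\|u_\lambda^-\|=t_\lambda(v_\lambda^-)\leq(\|m\|_\infty C/c_0)^{1/(1-\alpha)}$ then gives $C_0$, and hence also $|H_\lambda(u_\lambda^-)|=t^2|H_\lambda(v)|\geq c_0'$. Finally, choosing $\delta$ so that these bounds hold on the whole window completes the argument.
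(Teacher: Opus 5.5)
Your proposal follows essentially the same route as the paper. You swap the minimizers $v_{\lambda_0}^\pm$ and $v_\lambda^\pm$ as test functions in the reduced problems, use the envelope formula $\partial_\mu J_\mu^\pm(v)=-\tfrac12 t_\mu(v)^2\|v\|_2^2$, and control the second-order Taylor remainder through a compactness-based lower bound on $|H_\mu(v)|$. Two points need fixing.

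\textbf{The uniform upper bound on $\widehat J_{\lambda_n}^-$ uses the wrong test function.} Since $H_\lambda$ decreases in $\lambda$, the sets $\Theta_\lambda^-$ increase with $\lambda$. So $v_{\lambda_0+\eta}^-$ need not belong to $\Theta_{\lambda_n}^-$ when $\lambda_n<\lambda_0+\eta$. Even when it does, $|H_{\lambda_n}(v_{\lambda_0+\eta}^-)|$ is smaller than at $\lambda_0+\eta$, so $J_{\lambda_n}^-(v_{\lambda_0+\eta}^-)$ is not controlled. Two choices work:
\begin{itemize}
\item[(a)] Take $v_{\lambda_0-\eta}^-$ instead. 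For $\lambda_n>\lambda_0-\eta$ one has $H_{\lambda_n}(v)<H_{\lambda_0-\eta}(v)<0$, hence $J_{\lambda_n}^-(v)<J_{\lambda_0-\eta}^-(v)=\widehat J_{\lambda_0-\eta}^-$.
\item[(b)] As the paper does, take the normalized $\varphi_{1,\Omega}$. It lies in $\Theta_\lambda^-$ with $|H_\lambda|\geq(\min K-\lambda_{1,\Omega})\|\varphi_{1,\Omega}\|_2^2/\|\varphi_{1,\Omega}\|^2$ for every $\lambda$ in the compact window.
\end{itemize}
With either choice your contradiction argument for $c_0$ goes through.

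\textbf{A lower bound on $t_\lambda(v_\lambda^-)$ is missing.} The final claim $|H_\lambda(u_\lambda^-)|=t_\lambda(v_\lambda^-)^2|H_\lambda(v_\lambda^-)|\geq c_0'$ needs $t_\lambda(v_\lambda^-)$ bounded away from zero. Since $|H_\lambda(v)|\leq 1+\lambda/\lambda_{1,\Omega}$ on $S$, this amounts to $|M(v_\lambda^-)|$ being bounded below, which you do not argue. It follows from the same compactness idea. If $M(v_n)\to0$ with $H_{\lambda_n}(v_n)<0$, the weak limit $\bar v$ is nonzero (because $\|v_n\|_2^2>1/\lambda_n$), satisfies $M(\bar v)=0$ and $H_{\bar\lambda}(\bar v)\leq0$, and this contradicts \eqref{eq:21}. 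This is exactly the paper's uniform estimate \eqref{eq:M-uniform-negative}.
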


\proof
We give the proof for $u_\lambda^-$; the argument for $u_\lambda^+$ is
analogous.	Fix $\lambda_0\in(\lambda_{1,\Omega},\lambda^*)$ and choose
\[
K:=[\lambda_0-\eta,\lambda_0+\eta]
\Subset(\lambda_{1,\Omega},\lambda^*)
\]
for some $\eta>0$. For each $\lambda\in K$, let
$v_\lambda^-\in\Theta_\lambda^-\cap S$ be a minimizer of \eqref{jmin2}
and write
$u_\lambda^-=t_\lambda(v_\lambda^-)v_\lambda^-$.

\medskip
\noindent
\textit{Step 1: uniform estimates.}
There exists $c_M>0$ such that
\begin{equation}\label{eq:M-uniform-negative}
	M(v)\leq-c_M<0
\end{equation}
for every $\lambda\in K$ and every $v\in S$ with $H_\lambda(v)\leq0$.
Indeed, otherwise there would exist $\lambda_n\in K$ and $v_n\in S$ such
that
$H_{\lambda_n}(v_n)\leq0$ and $M(v_n)\to0$. Up to a subsequence,
$\lambda_n\to\bar\lambda\in K$,
$v_n\rightharpoonup\bar v$ in $H_0^1(\Omega)$, and
$v_n\to\bar v$ in $L^q(\Omega)$ for $1\leq q<2^*$.
Since
\[
1-\lambda_n\|v_n\|_2^2
=
H_{\lambda_n}(v_n)\leq0,
\]
we have $\|v_n\|_2^2\geq1/\lambda_n\geq1/\max K$, so
$\bar v\neq0$. Moreover, $M(\bar v)=0$ and
\[
H_{\bar\lambda}(\bar v)
\leq
\liminf_{n\to\infty}H_{\lambda_n}(v_n)
\leq0,
\]
contradicting \eqref{eq:21}. Thus \eqref{eq:M-uniform-negative} holds. The minimum levels are uniformly bounded above on $K$. Indeed,
$M(\varphi_{1,\Omega})<0$, while
\[
H_\lambda(\varphi_{1,\Omega})
=
(\lambda_{1,\Omega}-\lambda)
\|\varphi_{1,\Omega}\|_2^2<0
\]
for all $\lambda\in K$. Hence
$w:=\varphi_{1,\Omega}/\|\varphi_{1,\Omega}\|$
belongs to $\Theta_\lambda^-\cap S$, and
$\widehat J_\lambda^-\leq J_\lambda^-(w)\leq C_K$.
Using \eqref{eq:M-uniform-negative} and \eqref{j22}, we obtain
\begin{equation}\label{eq:H-uniform-negative-normalized}
	|H_\lambda(v_\lambda^-)|\geq c_H>0,
	\qquad \lambda\in K.
\end{equation}
Since $v_\lambda^-\in S$, the values $M(v_\lambda^-)$ are uniformly
bounded. Thus \eqref{eq:t} and
\eqref{eq:H-uniform-negative-normalized} imply
\begin{equation}\label{eq:u-uniform-bounded}
	\|u_\lambda^-\|
	=
	t_\lambda(v_\lambda^-)
	\leq C,
	\qquad \lambda\in K.
\end{equation}
Moreover, the lower bound \eqref{eq:M-uniform-negative} and the boundedness
of $|H_\lambda(v_\lambda^-)|$ show that
$t_\lambda(v_\lambda^-)$ is bounded away from zero. Therefore
\begin{equation}\label{eq:H-uniform-negative-solution}
	|H_\lambda(u_\lambda^-)|\geq c_H>0,
	\qquad \lambda\in K.
\end{equation}

\medskip
\noindent
\textit{Step 2: differentiability of the reduced energy.}
For $v\neq0$ and parameters $\mu$ such that
$M(v)H_\mu(v)>0$, define
\[
\mathcal J(\mu,v):=E_\mu(t_\mu(v)v).
\]
Since $t_\mu(v)v\in\mathcal N_\mu$, differentiation gives
\begin{equation}\label{eq:reduced-energy-derivative}
	\frac{\partial\mathcal J}{\partial\mu}(\mu,v)
	=
	-\frac12 t_\mu(v)^2\|v\|_2^2.
\end{equation}
Furthermore,
\[
\frac{\partial t_\mu(v)}{\partial\mu}
=
\frac{t_\mu(v)\|v\|_2^2}
{(1-\alpha)H_\mu(v)},
\]
and hence
\begin{equation}\label{eq:reduced-energy-second-derivative}
	\frac{\partial^2\mathcal J}{\partial\mu^2}(\mu,v)
	=
	-
	\frac{t_\mu(v)^2\|v\|_2^4}
	{(1-\alpha)H_\mu(v)}.
\end{equation}	
By Step 1, for $\lambda,\lambda_0\in K$ sufficiently close, the quantities
$t_\mu(v_\lambda^-)$ and $\|v_\lambda^-\|_2$ remain uniformly bounded for
$\mu$ between $\lambda_0$ and $\lambda$, while
$|H_\mu(v_\lambda^-)|\geq c_H/2$. Hence the second derivative above is
uniformly bounded, and Taylor's formula gives
\begin{equation}\label{eq:Taylor-vlambda}
	\mathcal J(\lambda_0,v_\lambda^-)
	=
	\mathcal J(\lambda,v_\lambda^-)
	+
	\frac{\lambda-\lambda_0}{2}\|u_\lambda^-\|_2^2
	+
	R_1^-(\lambda,\lambda_0),
\end{equation}
and
\begin{equation}\label{eq:Taylor-vlambda0}
	\mathcal J(\lambda,v_{\lambda_0}^-)
	=
	\mathcal J(\lambda_0,v_{\lambda_0}^-)
	-
	\frac{\lambda-\lambda_0}{2}\|u_{\lambda_0}^-\|_2^2
	+
	R_2^-(\lambda,\lambda_0),
\end{equation}
where
$|R_i^-(\lambda,\lambda_0)|
\leq C|\lambda-\lambda_0|^2$, $i=1,2$.

\medskip
\noindent
\textit{Step 3: comparison of the minimum levels.}
Let $0<\lambda-\lambda_0<\delta$, with $\delta$ sufficiently small.
By \eqref{eq:H-uniform-negative-normalized},
$v_\lambda^-\in\Theta_{\lambda_0}^-$ and
$v_{\lambda_0}^-\in\Theta_\lambda^-$.
Therefore, the minimizing properties at $\lambda_0$ and $\lambda$,
combined with \eqref{eq:Taylor-vlambda} and
\eqref{eq:Taylor-vlambda0}, yield
\[
E_\lambda(u_\lambda^-)
-
E_{\lambda_0}(u_{\lambda_0}^-)
\geq
-\frac{\lambda-\lambda_0}{2}\|u_\lambda^-\|_2^2
-
R_1^-(\lambda,\lambda_0)
\]
and
\[
E_\lambda(u_\lambda^-)
-
E_{\lambda_0}(u_{\lambda_0}^-)
\leq
-\frac{\lambda-\lambda_0}{2}\|u_{\lambda_0}^-\|_2^2
+
R_2^-(\lambda,\lambda_0).
\]
Setting
\[
r_1^-:=-R_1^-,
\qquad
r_2^-:=R_2^-,
\]
we obtain \eqref{eq:j-j}, with
$r_i^-(\lambda,\lambda_0)=O(|\lambda-\lambda_0|^2)
=o(|\lambda-\lambda_0|)$.	
For $u_\lambda^+$, choose
$K\Subset(-\infty,\lambda^*)$. Since $M(v)>0$ for
$v\in\Theta^+\cap S$, the definition of $\lambda^*$ yields
$H_\lambda(v)\geq c_K>0$ uniformly for $\lambda\in K$.
A fixed admissible direction in $\Theta^+\cap S$, together with the
minimizing property, prevents $M(v_\lambda^+)$ from tending to zero.
Hence $t_\lambda(v_\lambda^+)$ is bounded above and below on $K$, and the
same Taylor argument applies. Since $\Theta^+$ is independent of $\lambda$,
the comparison is even simpler and gives \eqref{eq:j-jj2}, with $r_i^+(\lambda,\lambda_0)
=
O(|\lambda-\lambda_0|^2)
=
o(|\lambda-\lambda_0|).\fin$

\begin{coro}
\label{cor:monot}
The maps
\[
(-\infty,\lambda^*)\ni\lambda
\mapsto E_\lambda(u_\lambda^+)
\]
and
\[
(\lambda_{1,\Omega},\lambda^*)\ni\lambda
\mapsto E_\lambda(u_\lambda^-)
\]
are continuous and strictly decreasing.
\end{coro}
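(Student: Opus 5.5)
The plan is to read off both properties directly from the two-sided estimates \eqref{eq:j-j} and \eqref{eq:j-jj2} of Proposition~\ref{prop:IneqParam}, together with the uniform bounds from its Step~1. We argue for the branch $u_\lambda^-$; the branch $u_\lambda^+$ is treated identically using \eqref{eq:j-jj2}. Fix $\lambda_0\in(\lambda_{1,\Omega},\lambda^*)$ and a compact neighborhood $K$ of $\lambda_0$ as in that proof. By \eqref{eq:u-uniform-bounded} and the Poincar\'e inequality, $\|u_\lambda^-\|_2^2\le C$ uniformly for $\lambda\in K$.

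\emph{Continuity.} For $0<\lambda-\lambda_0<\delta$, the inequalities \eqref{eq:j-j} give
\[
\bigl|E_\lambda(u_\lambda^-)-E_{\lambda_0}(u_{\lambda_0}^-)\bigr|\le \tfrac{C}{2}|\lambda-\lambda_0|+o(|\lambda-\lambda_0|),
\]
which proves right continuity at $\lambda_0$. For left continuity, we apply the same estimate with the roles of the two parameters exchanged: take $\lambda<\lambda_0$ as the base point and $\lambda_0$ as the perturbed parameter. This is legitimate because Step~1 supplies bounds that are uniform on $K$. We must also check that the constants $\delta$ and $C$ in the Taylor remainders $R_i^-$ are uniform for base points in $K$, and not merely for the fixed $\lambda_0$. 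This holds because the second-derivative bound \eqref{eq:reduced-energy-second-derivative} depends only on $c_H$, on the bound for $t_\mu$ and on $\|v\|_2\le \lambda_{1,\Omega}^{-1/2}$, and all of these are uniform on $K$. We therefore obtain local Lipschitz continuity of $\lambda\mapsto E_\lambda(u_\lambda^-)$ on compact subintervals.

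\emph{Strict monotonicity.} From the upper bound in \eqref{eq:j-j},
\[
E_\lambda(u_\lambda^-)-E_{\lambda_0}(u_{\lambda_0}^-)\le -\tfrac{\lambda-\lambda_0}{2}\|u_{\lambda_0}^-\|_2^2+O(|\lambda-\lambda_0|^2).
\]
Here $\|u_{\lambda_0}^-\|_2>0$, since $u_{\lambda_0}^-\ne0$ lies on the Nehari manifold. So for $\lambda$ slightly larger than $\lambda_0$ the difference is strictly negative, and the map is locally strictly decreasing from the right at every point. To pass to a global statement, note that a continuous function $f$ on an interval with this property is strictly decreasing. Indeed, if $a<b$ and $f(b)\ge f(a)$, let $s$ be the supremum of those points in $[a,b]$ where $f$ attains its maximum over $[a,b]$. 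Continuity makes $s$ itself a maximum point, and strict right decrease forces $s<b$ unless $f(b)<f(a)$; a short argument with right decrease at $a$ and at $s$ then gives a contradiction.

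A cleaner route to the global statement is also available. One can use that the right lower Dini derivative is at most $-\tfrac12 \inf_K\|u^-\|_2^2<0$, where the infimum is positive because $t_\lambda(v_\lambda^-)$ is bounded away from zero and $\|v_\lambda^-\|_2^2>1/\lambda$. A continuous function with negative upper right Dini derivative everywhere is strictly decreasing.

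The main obstacle is the uniformity of the remainder terms in the base point, which is needed both for left continuity and for the Dini-derivative argument. We will handle it by re-examining Step~2 of Proposition~\ref{prop:IneqParam} on the whole compact $K$ and not only at the fixed $\lambda_0$.
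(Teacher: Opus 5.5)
Your proposal is correct and follows the paper's own argument. Local strict decrease comes from the upper bound in \eqref{eq:j-j}--\eqref{eq:j-jj2} with $u_{\lambda_0}^\pm\neq0$, and continuity comes from the two-sided estimates; the paper compresses the globalization into ``subdivision of compact parameter intervals'', whereas you state explicitly that left continuity and the global step need $\delta$ and the $O(|\lambda-\lambda_0|^2)$ remainder constant to be uniform for base points in $K$, which Steps~1--2 of Proposition~\ref{prop:IneqParam} indeed provide. One small slip: your maximizer-based sketch does not work as written (use instead the largest minimizer of $f$ on $[a,b]$, which right-local strict decrease forces to equal $b$, and right decrease at $a$ then gives $f(b)<f(a)$), but your Dini-derivative route already suffices, since the estimate bounds the upper right Dini derivative at $\lambda_0$ by $-\tfrac12\|u_{\lambda_0}^\pm\|_2^2<0$.
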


\proof
Let $\lambda>\lambda_0$. Proposition~\ref{prop:IneqParam} gives
\[
E_\lambda(u_\lambda^\pm)
-
E_{\lambda_0}(u_{\lambda_0}^\pm)
\leq
-\frac{\lambda-\lambda_0}{2}\|u_{\lambda_0}^\pm\|_2^2
+
o(\lambda-\lambda_0).
\]
Since $u_{\lambda_0}^\pm\neq0$, the right-hand side is negative for
$\lambda-\lambda_0>0$ sufficiently small. Thus the energy levels are
locally strictly decreasing, hence strictly decreasing on their whole
domains by subdivision of compact parameter intervals. Continuity follows
from the two-sided estimates in Proposition~\ref{prop:IneqParam}.$\fin$

\begin{rem}
The maps $\lambda\mapsto E_\lambda(u_\lambda^\pm)$ are single-valued because
they represent minimum energy levels. The minimizers themselves, and hence
the quantities $\|u_\lambda^\pm\|_2$, need not be unique.
\end{rem}

\subsection{Asymptotic behavior as $\protect\lambda\uparrow\protect\lambda^*$}

\begin{lemma}
\label{solextr}
Assume that
\(\int_\Omega m(x)\varphi_{1,\Omega}^{\alpha+1}\,dx<0\).
\begin{itemize}
\item[a)]
If $|\Omega^+|>0$, then there exists a non-negative weak solution
$u_{\lambda^*}^+$ of \eqref{Eq1} such that
\[
E_{\lambda^*}(u_{\lambda^*}^+)<0,
\qquad
E_{\lambda^*}''(u_{\lambda^*}^+)>0.
\]
Moreover,
\[
E_\lambda(u_\lambda^+)
\rightarrow E_{\lambda^*}(u_{\lambda^*}^+)
\qquad\text{as }\lambda\uparrow\lambda^*.
\]

\item[b)]
If $|\Omega^0\cup\Omega^+|>0$ and condition \textbf{(M)} holds, then
\[
E_\lambda(u_\lambda^-)\rightarrow0
\qquad\text{as }\lambda\uparrow\lambda^*.
\]
\end{itemize}
\end{lemma}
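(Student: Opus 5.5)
For part a) I would take a sequence $\lambda_n\uparrow\lambda^*$ and set $u_n:=u_{\lambda_n}^+$, given by Corollary~\ref{lem:extr}. On $\mathcal N_\lambda$ we have $H_\lambda(u)=M(u)$, so
\[
E_\lambda(u)=-\frac{1-\alpha}{2(1+\alpha)}M(u).
\]
By Corollary~\ref{cor:monot} the levels $E_{\lambda_n}(u_n)$ decrease in $n$. Hence $M(u_n)\geq M(u_{\lambda_1}^+)>0$.

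\emph{Step 1: boundedness.} The first task is to show that $(u_n)$ is bounded in $H_0^1(\Omega)$. Suppose instead that $\|u_n\|\to\infty$, and put $w_n:=u_n/\|u_n\|\geq0$. Then $w_n$ solves
\[
-\Delta w_n=\lambda_n w_n+\|u_n\|^{\alpha-1}m\,w_n^\alpha ,
\]
and the extra factor satisfies $\|u_n\|^{\alpha-1}\to0$. From $H_{\lambda_n}(w_n)=\|u_n\|^{\alpha-1}M(w_n)\to0$ and $\|w_n\|=1$ we get $\lambda_n\|w_n\|_2^2\to1$. So the weak limit $w$ is nonzero, and it is a non-negative weak solution of $-\Delta w=\lambda^*w$. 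The strong maximum principle then forces $\lambda^*=\lambda_{1,\Omega}$. This contradicts \eqref{eq:lambda-star-characterization}, since $\int_\Omega m\varphi_{1,\Omega}^{\alpha+1}<0$.

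\emph{Step 2: passage to the limit.} Once $(u_n)$ is bounded, I extract $u_n\rightharpoonup u_{\lambda^*}^+$ in $H_0^1$ with strong convergence in $L^q$, $q<2^*$. I then pass to the limit in the weak formulation, so $u_{\lambda^*}^+\geq0$ is a weak solution at $\lambda^*$. Testing the equation gives $H_{\lambda^*}(u_{\lambda^*}^+)=M(u_{\lambda^*}^+)=\lim M(u_n)>0$. Consequently
\[
E_{\lambda^*}(u_{\lambda^*}^+)=\lim E_{\lambda_n}(u_n)<0,
\qquad
E''_{\lambda^*}(u_{\lambda^*}^+)=(1-\alpha)M(u_{\lambda^*}^+)>0 .
\]
Monotonicity then gives convergence along the whole family $\lambda\uparrow\lambda^*$, not only along the sequence. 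The main obstacle I expect is exactly the blow-up exclusion in Step 1, which the rescaling argument handles.

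For part b) I would use the general bound $E_\lambda(u_\lambda^-)=\widehat J_\lambda^-\leq J_\lambda^-(v)$ for any $v\in\Theta_\lambda^-$, combined with the monotonicity and positivity of the levels. It then suffices to build trial functions $v_\lambda$ with $J_\lambda^-(v_\lambda)\to0$.

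\emph{Construction of the trial functions.} Let $\overline u$ be the minimizer from Theorem~\ref{thm:ExisExtr}. Under \textbf{(M)} its proof gives $\mu_0,\mu_1>0$ with $\mu_0DH_{\lambda^*}(\overline u)=\mu_1DM(\overline u)$, together with $H_{\lambda^*}(\overline u)=M(\overline u)=0$. Since $DM(\overline u)\neq0$, I pick $\psi\in H_0^1(\Omega)$ with $b:=-DM(\overline u)[\psi]>0$. Then $a:=-DH_{\lambda^*}(\overline u)[\psi]=(\mu_1/\mu_0)b>0$ as well. I set $v_s:=\overline u+s\psi$ with $s=s(\lambda):=(\lambda^*-\lambda)^{1/2}$.

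\emph{Estimates.} Using $C^1$ regularity of $M$ and $H$, I expect
\[
H_\lambda(v_s)=(\lambda^*-\lambda)\|\overline u\|_2^2-as+o(s)=-as+o(s),
\qquad
M(v_s)=-bs+o(s).
\]
The first expansion uses $\lambda^*-\lambda=s^2=o(s)$. Hence $v_s\in\Theta_\lambda^-$ for $\lambda$ close to $\lambda^*$, and
\[
J_\lambda^-(v_s)=c_\alpha\frac{|M(v_s)|^{2/(1-\alpha)}}{|H_\lambda(v_s)|^{(1+\alpha)/(1-\alpha)}}
\sim c_\alpha\, b^{\frac{2}{1-\alpha}}a^{-\frac{1+\alpha}{1-\alpha}}\, s\longrightarrow0 .
\]
Thus $0<E_\lambda(u_\lambda^-)\leq J_\lambda^-(v_s)\to0$.

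The one delicate point in part b) is the differentiability of $M$ at $\overline u$, because $|u|^{\alpha+1}$ is only $C^1$. This is, however, all that the first-order expansion of $M(v_s)$ requires.
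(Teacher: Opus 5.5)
Your proposal is correct and follows essentially the same route as the paper. In (a) you rescale $u_n/\|u_n\|$ to rule out blow-up via $\lambda^*>\lambda_{1,\Omega}$, then use a weak limit, the Nehari identity and monotonicity of the levels; in (b) you make a first-order perturbation of the extremal $\overline u$ with step $s=(\lambda^*-\lambda)^{1/2}$, which gives $J_\lambda^-=O(s)$.

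The differences are minor. You obtain the energy limit directly from $M(u_n)\to M(u_{\lambda^*}^+)$ instead of proving strong $H_0^1$ convergence, and you omit the minimality of $u_{\lambda^*}^+$ on $\mathcal N_{\lambda^*}^+$, which the statement does not require. In (b) you take any $\psi$ with $DM(\overline u)[\psi]<0$ and use the multiplier relation $\mu_0DH_{\lambda^*}(\overline u)=\mu_1DM(\overline u)$; the paper instead rescales $\overline u$ to a solution and takes $\psi=\varphi_{1,\Omega}$.
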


\proof
$a)$
By Corollary~\ref{cor:monot},
$\lambda\mapsto E_\lambda(u_\lambda^+)$ is strictly decreasing on
$(-\infty,\lambda^*)$. Hence
\[
\overline E_{\lambda^*}^+
:=
\lim_{\lambda\uparrow\lambda^*}E_\lambda(u_\lambda^+)
\]
exists in $[-\infty,0)$. We first prove that $\overline E_{\lambda^*}^+>-\infty$. Suppose otherwise that, for some $\lambda_n\uparrow\lambda^*$,
$E_{\lambda_n}(u_{\lambda_n}^+)\to-\infty$.
Since $u_{\lambda_n}^+\in\mathcal N_{\lambda_n}^+$,
\[
H_{\lambda_n}(u_{\lambda_n}^+)
=
M(u_{\lambda_n}^+)>0,
\]
and
\begin{equation}\label{eq:E-plus-Nehari}
	E_{\lambda_n}(u_{\lambda_n}^+)
	=
	-c_\alpha H_{\lambda_n}(u_{\lambda_n}^+)
	=
	-c_\alpha M(u_{\lambda_n}^+),
	\qquad
	c_\alpha=\frac{1-\alpha}{2(1+\alpha)}.
\end{equation}
Thus $M(u_{\lambda_n}^+)\to+\infty$, and therefore
$\|u_{\lambda_n}^+\|\to+\infty$.
Set
\[
\rho_n:=\|u_{\lambda_n}^+\|,
\qquad
v_n:=\frac{u_{\lambda_n}^+}{\rho_n}.
\]
Then $\|v_n\|=1$ and $v_n\geq0$. Up to a subsequence,
\[
v_n\rightharpoonup\bar v \quad\text{in }H_0^1(\Omega),
\qquad
v_n\to\bar v \quad\text{in }L^q(\Omega),\quad 1\leq q<2^*.
\]
Dividing the Nehari identity by $\rho_n^2$ gives
\[
H_{\lambda_n}(v_n)=\rho_n^{\alpha-1}M(v_n)\longrightarrow0.
\]
Hence $1-\lambda_n\|v_n\|_2^2\to0$, and consequently
$\|\bar v\|_2^2=1/\lambda^*>0$. Dividing the equation for $u_{\lambda_n}^+$ by $\rho_n$ and passing to
the limit yields
\[
-\Delta\bar v=\lambda^*\bar v\qquad\text{in }\Omega.
\]
Since $\bar v\geq0$ and $\bar v\not\equiv0$, the strong maximum principle
implies $\lambda^*=\lambda_{1,\Omega}$, contrary to
\[
\int_\Omega m(x)\varphi_{1,\Omega}^{\alpha+1}\,dx<0.
\]
Thus $\overline E_{\lambda^*}^+>-\infty$.
We next prove compactness as $\lambda\uparrow\lambda^*$. For any sequence
$\lambda_n\uparrow\lambda^*$, the family $(u_{\lambda_n}^+)$ is bounded in
$H_0^1(\Omega)$; otherwise the same normalization argument would again
give $\lambda^*=\lambda_{1,\Omega}$. Hence, up to a subsequence,
\[
u_{\lambda_n}^+\rightharpoonup u_{\lambda^*}^+
\quad\text{in }H_0^1(\Omega),
\qquad
u_{\lambda_n}^+\to u_{\lambda^*}^+
\quad\text{in }L^q(\Omega),\quad 1\leq q<2^*.
\]
In particular,
$M(u_{\lambda_n}^+)\to M(u_{\lambda^*}^+)$.
The limit is nonzero. Indeed, for any fixed $\lambda_0<\lambda^*$,
monotonicity gives
\[
E_{\lambda_n}(u_{\lambda_n}^+)
\leq E_{\lambda_0}(u_{\lambda_0}^+)<0
\]
for large $n$. If $u_{\lambda^*}^+=0$, then
$M(u_{\lambda_n}^+)\to0$, and \eqref{eq:E-plus-Nehari} would imply
$E_{\lambda_n}(u_{\lambda_n}^+)\to0$, a contradiction.	
Passing to the limit in the weak formulation, using the strong
$L^{\alpha+1}$ convergence, shows that $u_{\lambda^*}^+$ is a
non-negative weak solution of \eqref{Eq1} for $\lambda=\lambda^*$.
Testing the limit equation by $u_{\lambda^*}^+$ gives
\[
H_{\lambda^*}(u_{\lambda^*}^+)=M(u_{\lambda^*}^+).
\]
Moreover, from the Nehari identities,
\[
\|u_{\lambda_n}^+\|^2
=
\lambda_n\|u_{\lambda_n}^+\|_2^2+M(u_{\lambda_n}^+)
\longrightarrow
\|u_{\lambda^*}^+\|^2.
\]
Together with weak convergence, this yields
\[
u_{\lambda_n}^+\longrightarrow u_{\lambda^*}^+
\qquad\text{strongly in }H_0^1(\Omega).
\]
Consequently,
\[
E_{\lambda_n}(u_{\lambda_n}^+)
\longrightarrow
E_{\lambda^*}(u_{\lambda^*}^+)
=
\overline E_{\lambda^*}^+<0.
\]
Hence
$H_{\lambda^*}(u_{\lambda^*}^+)=M(u_{\lambda^*}^+)>0$, and therefore
\[
E_{\lambda^*}''(u_{\lambda^*}^+)
=
(1-\alpha)M(u_{\lambda^*}^+)>0.
\]
It remains to verify the minimizing property at $\lambda^*$.
Let $w\in\mathcal N_{\lambda^*}^+$. Then
$H_{\lambda^*}(w)=M(w)>0$. For large $n$, set
\[
s_n
:=
\left(
\frac{M(w)}{H_{\lambda_n}(w)}
\right)^{\frac1{1-\alpha}}.
\]
Then $s_nw\in\mathcal N_{\lambda_n}^+$ and $s_n\to1$. By minimality,
\[
E_{\lambda_n}(u_{\lambda_n}^+)
\leq E_{\lambda_n}(s_nw).
\]
Passing to the limit gives
$E_{\lambda^*}(u_{\lambda^*}^+)\leq E_{\lambda^*}(w)$.
Thus $u_{\lambda^*}^+$ minimizes $E_{\lambda^*}$ on
$\mathcal N_{\lambda^*}^+$, and the limit is independent of the chosen
sequence. This proves $a)$.
\par
\noindent
$b)$
Let $u_*$ be the nonzero non-negative minimizer associated with
$\lambda^*$ in Theorem~\ref{thm:ExisExtr}. Under condition \textbf{(M)},
Theorem~\ref{thm:ExisExtr}(3) allows us, after a positive rescaling, to
assume that $u_*$ is a weak solution of~\eqref{Eq1} for $\lambda=\lambda^*$.
Moreover,
\[
H_{\lambda^*}(u_*)=M(u_*)=0.
\]
Let $\varphi_1=\varphi_{1,\Omega}>0$. Testing the equation for $u_*$ by
$\varphi_1$ gives
\[
\int_\Omega m(x)u_*^\alpha\varphi_1\,dx
=
(\lambda_{1,\Omega}-\lambda^*)
\int_\Omega u_*\varphi_1\,dx<0.
\]
Hence
\[
DH_{\lambda^*}(u_*)[\varphi_1]<0,
\qquad
DM(u_*)[\varphi_1]<0.
\]
For $v_\varepsilon:=u_*+\varepsilon\varphi_1$, we therefore have
\[
H_{\lambda^*}(v_\varepsilon)
=
-c_1\varepsilon+o(\varepsilon),
\qquad
M(v_\varepsilon)
=
-c_2\varepsilon+o(\varepsilon)
\]
for some $c_1,c_2>0$.	
Choose $\varepsilon(\lambda)=\sqrt{\lambda^*-\lambda}$. Since
\[
H_\lambda(v_\varepsilon)
=
H_{\lambda^*}(v_\varepsilon)
+
(\lambda^*-\lambda)\|v_\varepsilon\|_2^2,
\]
for $\lambda$ sufficiently close to $\lambda^*$,
\[
H_\lambda(v_{\varepsilon(\lambda)})<0,
\qquad
M(v_{\varepsilon(\lambda)})<0.
\]
Thus $v_{\varepsilon(\lambda)}$ is admissible for the negative Nehari
level. By minimality,
\[
0<E_\lambda(u_\lambda^-)
\leq
J_\lambda^-(v_{\varepsilon(\lambda)}).
\]
Using \eqref{j22} and the preceding expansions,
\[
J_\lambda^-(v_{\varepsilon(\lambda)})
\leq C\varepsilon(\lambda)\longrightarrow0.
\]
Therefore $E_\lambda(u_\lambda^-)\longrightarrow0\text{ as }\lambda\uparrow\lambda^*.\fin$
\par
\medskip
For every weak solution $u_\lambda$ of \eqref{Eq1},
\begin{equation}\label{eq:EHM}
E_\lambda(u_\lambda)
=
-\frac{1-\alpha}{2(1+\alpha)}H_\lambda(u_\lambda)
=
-\frac{1-\alpha}{2(1+\alpha)}M(u_\lambda).
\end{equation}

\begin{coro}
\label{cor:solextr}
Assume that
\(\int_\Omega m(x)\varphi_{1,\Omega}^{\alpha+1}\,dx<0\).
\begin{itemize}
\item[a)]
If $|\Omega^+|>0$, then, as $\lambda\uparrow\lambda^*$,
\[
H_\lambda(u_\lambda^+)
\rightarrow H_{\lambda^*}(u_{\lambda^*}^+),
\qquad
M(u_\lambda^+)
\rightarrow M(u_{\lambda^*}^+).
\]

\item[b)]
If $|\Omega^0\cup\Omega^+|>0$ and condition \textbf{(M)} holds, then
\[
H_\lambda(u_\lambda^-)\longrightarrow0,
\qquad
M(u_\lambda^-)\longrightarrow0
\qquad\text{as }\lambda\uparrow\lambda^*.
\]
\end{itemize}
\end{coro}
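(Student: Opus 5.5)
The plan is to read off both parts directly from the Nehari identity \eqref{eq:EHM}, combined with the energy limits of Lemma~\ref{solextr}. Each $u_\lambda^\pm$ is a weak solution of \eqref{Eq1}, so testing the equation with $u_\lambda^\pm$ gives $H_\lambda(u_\lambda^\pm)=M(u_\lambda^\pm)$. Substituting this into the definition of $E_\lambda$ yields
\[
E_\lambda(u_\lambda^\pm)=-c_\alpha H_\lambda(u_\lambda^\pm)=-c_\alpha M(u_\lambda^\pm),
\qquad c_\alpha=\frac{1-\alpha}{2(1+\alpha)}>0 .
\]
The same identity holds at $\lambda=\lambda^*$ for the limiting solution $u_{\lambda^*}^+$, since Lemma~\ref{solextr}(a) shows it is a weak solution there. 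Consequently, $H$ and $M$ along each branch are fixed nonzero multiples of the energy, and every convergence statement reduces to a convergence of energy levels.

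For part (a), I will apply Lemma~\ref{solextr}(a), which gives $E_\lambda(u_\lambda^+)\to E_{\lambda^*}(u_{\lambda^*}^+)$ as $\lambda\uparrow\lambda^*$. Dividing by $-c_\alpha$ then gives
\[
M(u_\lambda^+)=-c_\alpha^{-1}E_\lambda(u_\lambda^+)\to -c_\alpha^{-1}E_{\lambda^*}(u_{\lambda^*}^+)=M(u_{\lambda^*}^+),
\]
and the identical computation gives $H_\lambda(u_\lambda^+)\to H_{\lambda^*}(u_{\lambda^*}^+)$. This convergence holds for the full limit $\lambda\uparrow\lambda^*$, not only along a subsequence. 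The reason is that the minimum energy level is single-valued (see the remark after Corollary~\ref{cor:monot}), so the quantities $H_\lambda(u_\lambda^+)$ and $M(u_\lambda^+)$ are themselves independent of which minimizer is selected. Moreover, Lemma~\ref{solextr}(a) already shows that the limit does not depend on the chosen sequence.

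For part (b), I will apply Lemma~\ref{solextr}(b), which gives $E_\lambda(u_\lambda^-)\to0$ under (M). The identity above then gives $H_\lambda(u_\lambda^-)=M(u_\lambda^-)=-c_\alpha^{-1}E_\lambda(u_\lambda^-)\to0$. I expect no real obstacle anywhere in this argument. The only point to check is that \eqref{eq:EHM} applies at every $\lambda$ along both branches, and this holds because each $u_\lambda^\pm$ is a weak solution by Proposition~\ref{prop:crJE2}.
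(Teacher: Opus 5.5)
Your proposal is correct and matches the paper's argument. The paper states this corollary immediately after the Nehari identity \eqref{eq:EHM}, with no separate proof, as a direct consequence of that identity combined with the energy limits in Lemma~\ref{solextr}(a) and (b). Your extra remark is also correct: $H_\lambda(u_\lambda^\pm)$ and $M(u_\lambda^\pm)$ are determined by the single-valued minimum level, so they do not depend on which minimizer is selected.
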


\begin{lemma}
\label{lem:corD}
Assume that $0<\alpha<1$ and $m\in L^\infty(\Omega)$.
\begin{itemize}
\item[a)]
If
\(\int_\Omega m(x)\varphi_{1,\Omega}^{\alpha+1}\,dx<0\), then, as
$\lambda\downarrow\lambda_{1,\Omega}$,
\[
E_\lambda(u_\lambda^-)\rightarrow+\infty,
\qquad
\|u_\lambda^-\|\rightarrow+\infty,
\]
and, after normalizing $\|\varphi_{1,\Omega}\|=1$,
\[
\frac{u_\lambda^-}{\|u_\lambda^-\|}
\longrightarrow\varphi_{1,\Omega}
\qquad\text{strongly in }H_0^1(\Omega).
\]	
\item[b)]
If $|\Omega^+|>0$, then, as $\lambda\to-\infty$,
\[
E_\lambda(u_\lambda^+)\rightarrow0,
\qquad
\|u_\lambda^+\|\rightarrow0.
\]
\end{itemize}
\end{lemma}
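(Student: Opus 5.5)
For part a), I will work directly with the fibering representation $u_\lambda^-=t_\lambda(v_\lambda^-)v_\lambda^-$, where $v_\lambda^-\in\Theta_\lambda^-\cap S$ minimizes \eqref{jmin2}, so that $\|u_\lambda^-\|=t_\lambda(v_\lambda^-)$. The energy blow-up will follow from a lower bound on $J_\lambda^-$ on $\Theta_\lambda^-\cap S$ as $\lambda\downarrow\lambda_{1,\Omega}$. Since $|M(v)|\le C$ on $S$, the key is to show that, uniformly over $v\in\Theta_\lambda^-\cap S$, $|H_\lambda(v)|\to0$ while $|M(v)|$ stays bounded below.

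For the first claim, write $v=a\varphi_{1,\Omega}+w$ with $\|\varphi_{1,\Omega}\|=1$ and $w$ orthogonal to $\varphi_{1,\Omega}$. The spectral gap gives
\[
1-\lambda\|v\|_2^2\ \ge\ \Bigl(1-\tfrac{\lambda}{\lambda_{2,\Omega}}\Bigr)\|w\|^2-\Bigl(\tfrac{\lambda}{\lambda_{1,\Omega}}-1\Bigr)a^2 .
\]
Hence $H_\lambda(v)<0$ forces $\|w\|^2\le C(\lambda-\lambda_{1,\Omega})$ and $|H_\lambda(v)|\le C(\lambda-\lambda_{1,\Omega})$. Consequently $v\to\pm\varphi_{1,\Omega}$ uniformly, and, replacing $v$ by $|v|$, $M(v)\to\int m\varphi_{1,\Omega}^{\alpha+1}<0$ uniformly. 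Then \eqref{j22} gives
\[
J_\lambda^-(v)\ \ge\ c\,(\lambda-\lambda_{1,\Omega})^{-\frac{1+\alpha}{1-\alpha}}\longrightarrow+\infty,
\]
so $E_\lambda(u_\lambda^-)=\widehat J_\lambda^-\to+\infty$. Similarly, by \eqref{eq:t},
\[
t_\lambda(v_\lambda^-)=\bigl(|M|/|H_\lambda|\bigr)^{1/(1-\alpha)}\to\infty,
\]
which gives $\|u_\lambda^-\|\to\infty$. The normalized profile is $u_\lambda^-/\|u_\lambda^-\|=v_\lambda^-$, which is non-negative and satisfies $\|v_\lambda^--\varphi_{1,\Omega}\|\to0$ by the decomposition estimate, since $\|w\|\to0$ and $a\to1$. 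This settles a) without any PDE compactness argument. The main point to check is the uniformity in $v$ of the estimate on $\|w\|$.

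For part b), I will use \eqref{eq:EHM}, which gives $E_\lambda(u_\lambda^+)=-c_\alpha M(u_\lambda^+)=-c_\alpha H_\lambda(u_\lambda^+)$. For $\lambda<0$, $H_\lambda(u)\ge\|u\|^2+|\lambda|\|u\|_2^2$, while $M(u)\le\|m^+\|_\infty\|u\|_{\alpha+1}^{\alpha+1}$. The Nehari identity then yields
\[
|\lambda|\,\|u\|_2^2\le C\|u\|_2^{\alpha+1}
\]
by Hölder on the bounded domain, so $\|u_\lambda^+\|_2\le C|\lambda|^{-1/(1-\alpha)}\to0$. Feeding this back,
\[
\|u_\lambda^+\|^2\le H_\lambda(u_\lambda^+)=M(u_\lambda^+)\le C\|u_\lambda^+\|_2^{\alpha+1}\to0 .
\]
Hence $\|u_\lambda^+\|\to0$ and $E_\lambda(u_\lambda^+)=-c_\alpha M(u_\lambda^+)\to0$. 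This part is routine. Alternatively, the maximum-principle estimate $\|u\|_\infty\le(\|m^+\|_\infty/|\lambda|)^{1/(1-\alpha)}$ mentioned in the introduction gives the same conclusion.
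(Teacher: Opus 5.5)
Your proof is correct in both parts, but it takes a different route from the paper's in each.

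In a), the paper argues by sequential compactness. It normalizes $v_n=u^-_{\lambda_n}/\|u^-_{\lambda_n}\|$ and extracts a weak limit. It then uses $1<\lambda_n\|v_n\|_2^2$ and the Rayleigh characterization of $\lambda_{1,\Omega}$ to identify the limit as $\varphi_{1,\Omega}$ with norm one, which gives strong convergence. Finally it reads off $t_{\lambda_n}(v_n)\to\infty$ and $J^-_{\lambda_n}(v_n)\to\infty$ from $H_{\lambda_n}(v_n)\to0$ and $M(v_n)\to M(\varphi_{1,\Omega})<0$.

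You instead use the spectral gap $\lambda_{2,\Omega}>\lambda_{1,\Omega}$ and the orthogonal splitting, which gives an estimate uniform on all of $\Theta^-_\lambda\cap S$. This buys explicit rates: $E_\lambda(u^-_\lambda)\geq c(\lambda-\lambda_{1,\Omega})^{-(1+\alpha)/(1-\alpha)}$, $\|u^-_\lambda\|\geq c(\lambda-\lambda_{1,\Omega})^{-1/(1-\alpha)}$, and $\|v^-_\lambda-\varphi_{1,\Omega}\|^2=O(\lambda-\lambda_{1,\Omega})$. It also shows that the whole component $\mathcal N^-_\lambda$ escapes to infinity, independently of which minimizer is chosen. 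The paper's argument needs only the first eigenvalue and weak compactness, so it would carry over to settings such as the $p$-Laplacian, where no such Hilbertian decomposition is available.

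Two small points you leave implicit:
\begin{itemize}
\item $a=\lambda_{1,\Omega}\int_\Omega v^-_\lambda\varphi_{1,\Omega}\,dx\geq0$ because $v^-_\lambda\geq0$, so $a\to1$ rather than $-1$.
\item $M$ is even and uniformly continuous on bounded sets, which is what gives the uniform convergence of $M(v)$.
\end{itemize}

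In b), the paper uses the monotonicity of Corollary~\ref{cor:monot} to keep $E_\lambda(u^+_\lambda)$, and hence $H_\lambda(u^+_\lambda)$, bounded as $\lambda\to-\infty$. It then deduces $\|u^+_\lambda\|_2\to0$. Your direct use of the Nehari identity and H\"older's inequality avoids Proposition~\ref{prop:IneqParam} altogether. It gives $\|u^+_\lambda\|_2\leq C|\lambda|^{-1/(1-\alpha)}$ and $\|u^+_\lambda\|^2+|E_\lambda(u^+_\lambda)|\leq C|\lambda|^{-(1+\alpha)/(1-\alpha)}$. It also applies to every nontrivial point of $\mathcal N_\lambda$ for $\lambda<0$, not only to the selected minimizer.
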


\proof
$a)$
Let $\lambda_n\downarrow\lambda_{1,\Omega}$ and set
\[
v_n:=\frac{u_{\lambda_n}^-}{\|u_{\lambda_n}^-\|}.
\]
Then $\|v_n\|=1$ and $H_{\lambda_n}(v_n)<0$. Up to a subsequence,
$v_n\rightharpoonup\bar v$ in $H_0^1(\Omega)$ and
$v_n\to\bar v$ in $L^q(\Omega)$ for $1\leq q<2^*$.	
Since $1<\lambda_n\|v_n\|_2^2$, passing to the limit gives
$H_{\lambda_{1,\Omega}}(\bar v)\leq0$. The variational characterization
of $\lambda_{1,\Omega}$ gives the reverse inequality; hence
$H_{\lambda_{1,\Omega}}(\bar v)=0$.
Thus $\bar v$ is proportional to $\varphi_{1,\Omega}$, and under the chosen
normalization
\[
v_n\longrightarrow\varphi_{1,\Omega}
\qquad\text{strongly in }H_0^1(\Omega).
\]
Since $M(\varphi_{1,\Omega})<0$, formula \eqref{eq:t} yields
$\|u_{\lambda_n}^-\|=t_{\lambda_n}(v_n)\to+\infty$, and
\eqref{j22} gives
$E_{\lambda_n}(u_{\lambda_n}^-)\to+\infty$.
Since $(\lambda_n)$ was arbitrary, the assertions hold for the whole family.
\par
\noindent
$b)$
By Corollary~\ref{cor:monot}, the limit
\[
\overline E_{-\infty}^+
:=
\lim_{\lambda\to-\infty}E_\lambda(u_\lambda^+)\leq0
\]
exists. From \eqref{eq:EHM},
$H_\lambda(u_\lambda^+)$ remains bounded as $\lambda\to-\infty$.
Since
\[
H_\lambda(u_\lambda^+)
=
\|u_\lambda^+\|^2
+
|\lambda|\|u_\lambda^+\|_2^2,
\]
both terms on the right are bounded, and hence
$\|u_\lambda^+\|_2\to0$. Since $\alpha+1<2$ and $\Omega$ is bounded,
$\|u_\lambda^+\|_{\alpha+1}\to0$, so
\[
|M(u_\lambda^+)|
\leq
\|m\|_\infty
\|u_\lambda^+\|_{\alpha+1}^{\alpha+1}
\longrightarrow0.
\]
By \eqref{eq:EHM},
$E_\lambda(u_\lambda^+)\to0$ and
$H_\lambda(u_\lambda^+)\to0$. Finally,
\[
0\leq\|u_\lambda^+\|^2
\leq H_\lambda(u_\lambda^+),
\]
and therefore $\|u_\lambda^+\|\to0.\fin$

\section{\label{subsection degenerate}Degenerate case: $m(x)\leq0$ and
$|\Omega^0|>0$}

\begin{theo}
\label{thm:Main2}
Assume that $0<\alpha<1$, $m\in L^\infty(\Omega)$, $m\leq0$ a.e. in
$\Omega$, and that
\[
\Omega^0:=\operatorname{int}\{x\in\Omega:m(x)=0\}
\]
is a nonempty connected $C^{1,1}$ domain compactly contained in$\Omega$. Assume also that
	$m<0$ a.e. in $\Omega\setminus\overline{\Omega^0}$.
Then the following assertions hold.
\begin{itemize}
\item[i)]
For every
$\lambda\in(\lambda_{1,\Omega},\lambda_{1,\Omega^0})$, problem
\eqref{Eq1} possesses a non-negative weak solution $u_\lambda^-$ such that
\[
E_\lambda(u_\lambda^-)>0,
\qquad
E_\lambda''(u_\lambda^-)<0,
\]
and
\[
E_\lambda(u_\lambda^-)\rightarrow0
\qquad\text{as }\lambda\uparrow\lambda_{1,\Omega^0}.
\]

\item[ii)]
If $\lambda\geq\lambda_{1,\Omega^0}$, then problem \eqref{Eq1} has no
positive weak solution.

\item[iii)]
As $\lambda\uparrow\lambda_{1,\Omega^0}$,
\[
\|u_\lambda^-\|\rightarrow0.
\]
Moreover, after normalizing $\|\varphi_{1,\Omega^0}\|=1$,
\[
\frac{u_\lambda^-}{\|u_\lambda^-\|}
\rightarrow
\widetilde{\varphi}_{1,\Omega^0}
\qquad\text{strongly in }H_0^1(\Omega),
\]
where $\widetilde{\varphi}_{1,\Omega^0}$ denotes the zero extension of
$\varphi_{1,\Omega^0}$ to $\Omega$.
\end{itemize}
\end{theo}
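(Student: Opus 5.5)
The plan is to reduce part i) to the general results already proved, handle ii) with a Green's-formula argument on $\Omega^0$, and obtain iii) by a normalization and compactness argument that rules out a nonvanishing limit.

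\emph{Part i).} Since $m\leq0$ we have $|\Omega^+|=0$, so Theorem~\ref{thm:ExisExtr}\,ii) gives $\lambda^*=\lambda_{1,\Omega^0}$. Because $m<0$ a.e. on $\Omega\setminus\overline{\Omega^0}$, a set of positive measure, and $\varphi_{1,\Omega}>0$, we get $\int_\Omega m\varphi_{1,\Omega}^{\alpha+1}\,dx<0$. Corollary~\ref{lem:extr}\,ii) then yields $u_\lambda^-$ with $E_\lambda(u_\lambda^-)>0$ and $E_\lambda''(u_\lambda^-)<0$ for $\lambda\in(\lambda_{1,\Omega},\lambda_{1,\Omega^0})$.

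For the endpoint limit we cannot use Lemma~\ref{solextr}\,b), because (M) fails and $\widetilde\varphi_{1,\Omega^0}$ is not a solution. Instead we use the explicit test function $v_\varepsilon:=\widetilde\varphi_{1,\Omega^0}+\varepsilon\varphi_{1,\Omega}$. Then
\[
M(v_\varepsilon)=\varepsilon^{\alpha+1}\int_{\Omega\setminus\Omega^0}m\,\varphi_{1,\Omega}^{\alpha+1}\,dx=-c_2\varepsilon^{\alpha+1}.
\]
Testing the equation for $\varphi_{1,\Omega}$ against $\widetilde\varphi_{1,\Omega^0}$ gives
\[
H_\lambda(v_\varepsilon)=(\lambda_{1,\Omega^0}-\lambda)\|\widetilde\varphi_{1,\Omega^0}\|_2^2+2\varepsilon(\lambda_{1,\Omega}-\lambda)\int_\Omega\widetilde\varphi_{1,\Omega^0}\varphi_{1,\Omega}\,dx+O(\varepsilon^2).
\]
With $\varepsilon=\sqrt{\lambda_{1,\Omega^0}-\lambda}$ this is $-c_1\varepsilon+o(\varepsilon)<0$. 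Hence $v_\varepsilon\in\Theta_\lambda^-$, and by \eqref{j22}
\[
0<E_\lambda(u_\lambda^-)\leq J_\lambda^-(v_\varepsilon)\leq C\varepsilon^{\frac{2(1+\alpha)}{1-\alpha}-\frac{1+\alpha}{1-\alpha}}=C\varepsilon^{\frac{1+\alpha}{1-\alpha}}\to0 .
\]

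\emph{Part ii).} Let $u>0$ be a weak solution. By Lemma~\ref{Lemm3.3}, $u\in W^{2,p}$, and $-\Delta u=\lambda u$ in $\Omega^0$. Green's formula on the $C^{1,1}$ domain $\Omega^0$ with $\varphi=\varphi_{1,\Omega^0}$ gives
\[
(\lambda-\lambda_{1,\Omega^0})\int_{\Omega^0}u\varphi\,dx=\int_{\partial\Omega^0}u\,\partial_n\varphi\,d\sigma .
\]
The right side is strictly negative, since $u>0$ on $\partial\Omega^0\subset\Omega$ and $\partial_n\varphi<0$ by Hopf's lemma. This is impossible for $\lambda\geq\lambda_{1,\Omega^0}$.

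\emph{Part iii).} Take $\lambda_n\uparrow\lambda_{1,\Omega^0}$ and set $v_n=u_n/\|u_n\|$ with $u_n=u^-_{\lambda_n}$. Since $M\leq0$, the Nehari identity reads $H_{\lambda_n}(v_n)=\|u_n\|^{\alpha-1}M(v_n)\leq0$. This gives $\lambda_n\|v_n\|_2^2\geq1$, so the weak limit $\bar v\geq0$ is nonzero. By \eqref{eq:EHM} and part i), $M(u_n)\to0$ and $H_{\lambda_n}(u_n)\to0$.

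\begin{itemize}
\item[(a)] Suppose first that $\|u_n\|\geq\delta>0$ along a subsequence. Then $M(v_n)=\|u_n\|^{-(1+\alpha)}M(u_n)\to0$, so $M(\bar v)=0$ and $\bar v=0$ a.e. on $\{m<0\}$; by the standing assumption $\bar v\in H_0^1(\Omega^0)$. Also $H_{\lambda_n}(v_n)=\|u_n\|^{-2}M(u_n)\to0$, so $\lambda_{1,\Omega^0}\|\bar v\|_2^2=1\geq\|\bar v\|^2$. Combined with the Rayleigh inequality in $\Omega^0$, this forces equality, hence strong convergence and $\bar v=\widetilde\varphi_{1,\Omega^0}$.
\item[(b)] Now pass to the limit in the weak equation for $u_n$, divided by $\|u_n\|$. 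If $(u_n)$ is bounded, we may instead take the limit $c\widetilde\varphi_{1,\Omega^0}$ of $u_n$, with $c\geq\delta$. If $(u_n)$ is unbounded, the term $\|u_n\|^{\alpha-1}m\,v_n^\alpha$ vanishes. In either case we obtain $-\Delta\widetilde\varphi_{1,\Omega^0}=\lambda_{1,\Omega^0}\widetilde\varphi_{1,\Omega^0}$ in $\mathcal D'(\Omega)$, the $m$-term dropping out because $m\widetilde\varphi_{1,\Omega^0}^\alpha=0$.
\item[(c)] This distributional identity is false: $-\Delta\widetilde\varphi_{1,\Omega^0}$ contains the surface measure $-\partial_n\varphi_{1,\Omega^0}\,d\sigma$ on $\partial\Omega^0$, which is nonzero by Hopf. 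Hence $\|u_n\|\to0$.
\end{itemize}

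Once $\|u_n\|\to0$ is known, the normalized profile follows. From $H_{\lambda_n}(v_n)\geq1-\lambda_n/\lambda_{1,\Omega}$ we get $|M(v_n)|\leq C\|u_n\|^{1-\alpha}\to0$. Together with $H_{\lambda_n}(v_n)\leq0$, the argument in (a) again gives $\bar v\in H_0^1(\Omega^0)$ with minimal Rayleigh quotient and strong convergence to $\widetilde\varphi_{1,\Omega^0}$. Since the sequence was arbitrary, this holds for the whole family.

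I expect the main obstacle to be step (c), excluding a nonvanishing limit. It requires both the identification of $\{v\in H_0^1(\Omega):v=0\text{ a.e. on }\{m\neq0\}\}$ with $H_0^1(\Omega^0)$ and the Hopf boundary-jump argument on the $C^{1,1}$ boundary $\partial\Omega^0$.
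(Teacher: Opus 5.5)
Your proposal is correct and follows the paper's proof in all three parts. In i) you perturb the extremal $\widetilde\varphi_{1,\Omega^0}$ in a direction that makes both $H_\lambda$ and $M$ negative, and your concrete choice $\varphi_{1,\Omega}$ makes the sign of the cross term explicit, $(\lambda_{1,\Omega}-\lambda)\int_\Omega\widetilde\varphi_{1,\Omega^0}\varphi_{1,\Omega}\,dx<0$. In ii) you use Green's identity with Hopf on $\partial\Omega^0$. In iii) you use the normalization/contradiction argument, closing with the nonzero boundary measure of $-\Delta\widetilde\varphi_{1,\Omega^0}$ rather than the strong maximum principle; for the outward normal that measure is $+\partial_n\varphi_{1,\Omega^0}\,d\sigma$, a harmless sign slip.

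Your bound $|M(v_n)|\leq C\|u_n\|^{1-\alpha}$ also supplies the justification of $M(\bar v)=0$ once $\|u_n\|\to0$. The paper leaves this implicit when it says ``the preceding compactness argument applies to every sequence'', although that argument divided $M(u_n)$ by $\|u_n\|^{\alpha+1}$ under the hypothesis that $\|u_n\|$ stays bounded away from zero.
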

\begin{proof}
$i)$
Since $m<0$ a.e. in
$\Omega\setminus\overline{\Omega^0}$ and
$\varphi_{1,\Omega}>0$ in $\Omega$, we have
\[
\int_\Omega m(x)\varphi_{1,\Omega}^{\alpha+1}\,dx<0.
\]
By Theorem~\ref{thm:ExisExtr}, $\lambda^*=\lambda_{1,\Omega^0}$, and an extremal function corresponding to $\lambda^*$ is, up to a positive
constant, $\widetilde{\varphi}_{1,\Omega^0}$.
Hence Corollary~\ref{lem:extr} yields, for every
$\lambda\in(\lambda_{1,\Omega},\lambda_{1,\Omega^0})$, a non-negative weak
solution $u_\lambda^-$ satisfying
\[
E_\lambda(u_\lambda^-)>0,
\qquad
E_\lambda''(u_\lambda^-)<0.
\]
It remains to prove that
\[
E_\lambda(u_\lambda^-)\rightarrow0
\qquad
\text{as }\lambda\uparrow\lambda_{1,\Omega^0}.
\]
Set
\[
\lambda^*:=\lambda_{1,\Omega^0},
\qquad
\phi:=\widetilde{\varphi}_{1,\Omega^0}.
\]
Then
\[
H_{\lambda^*}(\phi)=0,
\qquad
M(\phi)=0.
\]
Consider the bilinear form
\[
B_*(v,w)
:=
\int_\Omega\nabla v\cdot\nabla w\,dx
-\lambda^*\int_\Omega vw\,dx.
\]
We claim that there exists $\zeta\in H_0^1(\Omega)$ such that
\begin{equation}\label{eq:degenerate-zeta}
	B_*(\phi,\zeta)<0,
	\qquad
	M(\zeta)<0.
\end{equation}
Indeed, the functional $\zeta\mapsto B_*(\phi,\zeta)$ cannot vanish
identically on $H_0^1(\Omega)$. Otherwise $\phi$ would satisfy
\[
-\Delta\phi=\lambda^*\phi
\qquad\text{weakly in }\Omega,
\]
and the strong maximum principle would imply $\phi>0$ in $\Omega$,
contrary to $\phi=0$ in
$\Omega\setminus\overline{\Omega^0}$.
Thus there exists $\zeta_0\in H_0^1(\Omega)$ with
$B_*(\phi,\zeta_0)<0$.	
Choose a nonzero
$\eta\in C_c^\infty(\Omega\setminus\overline{\Omega^0})$.
Since $m<0$ a.e. there, $ M(\eta)<0$, while, because $\phi=0$ on the support of $\eta$,
$B_*(\phi,\eta)=0$.
Hence, for $\zeta=\zeta_0+s\eta$ with $s>0$ sufficiently large,
$M(\zeta)<0$, whereas
$B_*(\phi,\zeta)=B_*(\phi,\zeta_0)<0$.
This proves \eqref{eq:degenerate-zeta}.
Let
\[
\delta:=\lambda^*-\lambda>0,
\qquad
\varepsilon_\lambda:=K\delta,
\qquad
w_\lambda:=\phi+\varepsilon_\lambda\zeta,
\]
where $K>0$ will be chosen below. Since $m=0$ on $\Omega^0$ and
$\phi=0$ outside $\Omega^0$,
\begin{equation}\label{eq:degenerate-Mw}
	M(w_\lambda)
	=
	K^{\alpha+1}\delta^{\alpha+1}M(\zeta)<0.
\end{equation}
Moreover,
\[
H_\lambda(w_\lambda)
=
2\varepsilon_\lambda B_*(\phi,\zeta)
+\varepsilon_\lambda^2H_{\lambda^*}(\zeta)
+\delta\|\phi+\varepsilon_\lambda\zeta\|_2^2,
\]
and therefore
\begin{equation}\label{eq:degenerate-Hw}
	H_\lambda(w_\lambda)
	=
	\delta
	\bigl[
	\|\phi\|_2^2+2K B_*(\phi,\zeta)
	\bigr]
	+O(\delta^2).
\end{equation}
Since $B_*(\phi,\zeta)<0$, we may choose $K$ sufficiently large so that
\[
\|\phi\|_2^2+2K B_*(\phi,\zeta)<0.
\]
Hence there exist $c>0$ and $\delta_0>0$ such that
\begin{equation}\label{eq:degenerate-Hnegative}
	H_\lambda(w_\lambda)\leq-c\delta<0
	\qquad
	\text{for }0<\delta<\delta_0.
\end{equation}
Thus $w_\lambda\in\Theta_\lambda^-$ for $\lambda$ sufficiently close to
$\lambda^*$.
By the minimizing property of $u_\lambda^-$ and the homogeneity of
$J_\lambda^-$,
\[
0<E_\lambda(u_\lambda^-)
\leq J_\lambda^-(w_\lambda).
\]
Using \eqref{j22}, \eqref{eq:degenerate-Mw}, and
\eqref{eq:degenerate-Hnegative}, we obtain
\[
J_\lambda^-(w_\lambda)
\leq
C
\frac{
	\delta^{2(\alpha+1)/(1-\alpha)}
}{
	\delta^{(1+\alpha)/(1-\alpha)}
}
=
C\delta^{(1+\alpha)/(1-\alpha)}.
\]
Therefore
\[
E_\lambda(u_\lambda^-)\rightarrow0
\qquad
\text{as }\lambda\uparrow\lambda_{1,\Omega^0}.
\]
\par
\noindent
$ii)$
Suppose that $u_\lambda>0$ is a weak solution of \eqref{Eq1} for some
$\lambda\geq\lambda_{1,\Omega^0}$. Since $m=0$ a.e. in $\Omega^0$,
\[
-\Delta u_\lambda=\lambda u_\lambda
\qquad\text{in }\Omega^0.
\]
For $\phi\in C_0^\infty(\Omega^0)$, Picone's inequality gives
\[
\int_{\Omega^0}|\nabla\phi|^2\,dx
\geq
\lambda\int_{\Omega^0}\phi^2\,dx.
\]
Taking the infimum over
$\phi\in H_0^1(\Omega^0)\setminus\{0\}$ yields
$\lambda\leq\lambda_{1,\Omega^0}$, and hence
$\lambda=\lambda_{1,\Omega^0}$.	
To exclude equality, multiply
$-\Delta u_\lambda=\lambda_{1,\Omega^0}u_\lambda$
by $\varphi_{1,\Omega^0}$ and use Green's identity:
\[
\int_{\partial\Omega^0}
u_\lambda
\frac{\partial\varphi_{1,\Omega^0}}{\partial n}\,dS=0.
\]
By the Hopf lemma,
$\partial_n\varphi_{1,\Omega^0}<0$ on $\partial\Omega^0$, whereas
$u_\lambda>0$ there since $\Omega^0\Subset\Omega$.
The integral is therefore strictly negative, a contradiction.	
\medskip
\noindent
\textup{(iii)}
Let
\[
\lambda_n\uparrow\lambda_{1,\Omega^0},
\qquad
u_n:=u_{\lambda_n}^-,
\qquad
t_n:=\|u_n\|,
\qquad
v_n:=\frac{u_n}{t_n}.
\]
Then $\|v_n\|=1$ and
$H_{\lambda_n}(u_n)=M(u_n)<0$.
By part \textup{(i)} and the Nehari energy identity,
\[
E_{\lambda_n}(u_n)
=
-\frac{1-\alpha}{2(1+\alpha)}
H_{\lambda_n}(u_n)
=
-\frac{1-\alpha}{2(1+\alpha)}
M(u_n),
\]
we have
\[
H_{\lambda_n}(u_n)\longrightarrow0,
\qquad
M(u_n)\longrightarrow0.
\]
We first prove that $t_n\to0$. Suppose otherwise. Up to a subsequence,
$t_n\to a\in(0,+\infty]$, so $t_n$ is bounded away from zero.
Since $(v_n)$ is bounded in $H_0^1(\Omega)$, up to a subsequence,
\[
v_n\rightharpoonup\bar v
\quad\text{in }H_0^1(\Omega),
\qquad
v_n\to\bar v
\quad\text{in }L^q(\Omega),\quad 1\leq q<2^*.
\]
Since
\[
M(v_n)
=
\frac{M(u_n)}{t_n^{\alpha+1}}
\rightarrow0,
\]
we obtain $M(\bar v)=0$. Since $m\leq0$ and
$m<0$ a.e. in $\Omega\setminus\overline{\Omega^0}$,
$\bar v=0$ a.e. outside $\overline{\Omega^0}$, and therefore
\[
\bar v\in H_0^1(\Omega^0).
\]	
Furthermore,
\[
H_{\lambda_n}(v_n)
=
\frac{H_{\lambda_n}(u_n)}{t_n^2}
\rightarrow0.
\]
Since $\|v_n\|=1$,
\[
1-\lambda_n\|v_n\|_2^2\rightarrow0,
\]
and hence
\[
\|\bar v\|_2^2
=
\frac1{\lambda_{1,\Omega^0}}>0.
\]
The variational characterization of $\lambda_{1,\Omega^0}$ and weak lower
semicontinuity give
\[
1
\leq
\|\bar v\|^2
\leq
\liminf_{n\to\infty}\|v_n\|^2
=
1.
\]
Thus $\bar v$ is the normalized positive first eigenfunction of $\Omega^0$.
Consequently,
\[
\bar v=\widetilde{\varphi}_{1,\Omega^0},
\qquad
v_n\rightarrow\bar v
\quad\text{strongly in }H_0^1(\Omega).
\]	
Dividing \eqref{Eq1} by $t_n$ gives
\begin{equation}\label{eq:EqHM}
	-\Delta v_n-\lambda_n v_n
	=
	t_n^{\alpha-1}m v_n^\alpha
	\qquad\text{in }\Omega.
\end{equation}	
If $a\in(0,+\infty)$, then
$u_n\to a\widetilde{\varphi}_{1,\Omega^0}$ strongly in $H_0^1(\Omega)$,
and
\[
m u_n^\alpha\rightarrow0
\qquad
\text{in }L^{(\alpha+1)/\alpha}(\Omega).
\]
Passing to the limit in the equation for $u_n$ would therefore give
\[
-\Delta\widetilde{\varphi}_{1,\Omega^0}
=
\lambda_{1,\Omega^0}
\widetilde{\varphi}_{1,\Omega^0}
\qquad\text{weakly in }\Omega.
\]
If $a=+\infty$, then $t_n^{\alpha-1}\to0$, and passing to the limit in
\eqref{eq:EqHM} gives the same conclusion.	
Thus, in either case,
$\widetilde{\varphi}_{1,\Omega^0}$ would be a non-negative nontrivial weak
solution of the eigenvalue equation in the whole connected domain $\Omega$.
The strong maximum principle would then imply
$\widetilde{\varphi}_{1,\Omega^0}>0$ in $\Omega$, contradicting its
vanishing in $\Omega\setminus\overline{\Omega^0}$.
Therefore
\[
t_n=\|u_n\|\rightarrow0.
\]	
Finally, the preceding compactness argument applies to every sequence
$\lambda_n\uparrow\lambda_{1,\Omega^0}$. Since the normalized positive
first eigenfunction is unique, every such sequence satisfies
\[
\frac{u_{\lambda_n}^-}{\|u_{\lambda_n}^-\|}
\rightarrow
\widetilde{\varphi}_{1,\Omega^0}
\qquad\text{strongly in }H_0^1(\Omega).
\]
Hence, as $\lambda\uparrow\lambda_{1,\Omega^0}$,
\[
\frac{u_\lambda^-}{\|u_\lambda^-\|}
\rightarrow
\widetilde{\varphi}_{1,\Omega^0}
\qquad\text{strongly in }H_0^1(\Omega).
\]	
\end{proof}
The following estimate yields, in particular,
$\|u_\lambda^-\|_{L^\infty(\Omega)}\to0$ as
$\lambda\uparrow\lambda_{1,\Omega^0}$ in any dimension.

\begin{prop}
\label{prop:linftyEstimate}
Assume that $0<\alpha<1$, $m\in L^\infty(\Omega)$, and
$m\leq0$ a.e. in $\Omega$. Let $u_\lambda\in H_0^1(\Omega)$ be a
non-negative weak solution of \eqref{Eq1}. If $\lambda\leq\Lambda$, then
there exists $C=C(\Omega,N,\Lambda)>0$, independent of $\lambda$, such that
\begin{equation}\label{Estimate Linfinity}
	\|u_\lambda\|_{L^\infty(\Omega)}
	\leq C\|u_\lambda\|.
\end{equation}
\end{prop}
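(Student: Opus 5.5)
Since $m\le0$ and $u_\lambda\ge0$, the term $m(x)u_\lambda^\alpha$ is non-positive, so $u_\lambda$ is a weak subsolution of the linear equation
\[
-\Delta u\le\lambda u\le\Lambda^+u\qquad\text{in }\Omega,
\]
where $\Lambda^+:=\max\{\Lambda,0\}$. Indeed, for every non-negative test function $\psi\in H_0^1(\Omega)$,
\[
\int_\Omega\nabla u_\lambda\cdot\nabla\psi\,dx=\lambda\int_\Omega u_\lambda\psi\,dx+\int_\Omega m\,u_\lambda^\alpha\psi\,dx\le\Lambda^+\int_\Omega u_\lambda\psi\,dx .
\]
The case $\lambda<0$ is absorbed because $\lambda u_\lambda\psi\le0$. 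We will therefore apply the classical local-boundedness (De Giorgi--Nash--Moser) estimate for non-negative subsolutions of $-\Delta u\le c(x)u$ with bounded coefficient $c=\Lambda^+$, up to the boundary with zero Dirichlet data (see \cite[Theorem~8.15]{Gilb-Trud}). This yields
\[
\sup_\Omega u_\lambda\le C(N,\Omega,\Lambda)\,\|u_\lambda\|_{L^2(\Omega)}.
\]
Combined with Poincaré's inequality $\|u_\lambda\|_2\le\lambda_{1,\Omega}^{-1/2}\|u_\lambda\|$, this gives \eqref{Estimate Linfinity}. The constant depends only on $N$, $|\Omega|$ and the bound $\Lambda^+$ on the zero-order coefficient, hence not on $\lambda$ or on $m$.

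For a self-contained version, we would run Moser iteration directly. Test with $\psi=(u_\lambda-k)_+^{\beta}$, or with truncations $\min(u_\lambda,L)^{\beta}u_\lambda$, which are admissible since $u_\lambda\in H_0^1\cap L^\infty$ by Lemma~\ref{Lemm3.3}. The negative term $\int m u_\lambda^\alpha\psi$ is simply dropped. Then:
\begin{itemize}
\item apply the Sobolev inequality to $w=u_\lambda^{(\beta+1)/2}$ to get $\|u_\lambda\|_{\chi(\beta+1)}^{\beta+1}\le C(\beta+1)\Lambda^+\|u_\lambda\|_{\beta+1}^{\beta+1}$, where $\chi=N/(N-2)$ (any $\chi>1$ if $N\le2$);
\item iterate along $\beta_k+1=2\chi^k$;
\item note that the product of the constants converges, so $\|u_\lambda\|_\infty\le C\|u_\lambda\|_2$.
\end{itemize}
Alternatively, in the De Giorgi form, level sets $A_k=\{u_\lambda>k\}$ satisfy a decay inequality, giving $\sup u_\lambda\le C\|u_\lambda\|_2$.

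The main point to check is that the constants are uniform in $\lambda\le\Lambda$. This holds because only the upper bound $\Lambda^+$ enters, the sign-favourable terms ($\lambda<0$ and $m\le0$) having been discarded. A subtlety is that when $\Lambda^+$ is close to $\lambda_{1,\Omega}$ one cannot absorb the $\lambda$-term globally. The iteration avoids this, since it only needs an $L^2$ bound on the right-hand side and never coercivity of $-\Delta-\lambda$.
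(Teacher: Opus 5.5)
Your proposal is correct and follows the paper's proof. The paper also notes that $m\le0$ and $u_\lambda\ge0$ make $u_\lambda$ a non-negative weak subsolution of $-\Delta u=\Lambda_+u$, applies the standard global $L^\infty$--$L^2$ estimate (Gilbarg--Trudinger, Theorem 8.15, which needs no sign condition on the zero-order coefficient), and concludes with Poincar\'e's inequality; your Moser-iteration sketch is just an explicit version of that same standard estimate.
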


\proof
Since $u_\lambda\geq0$, $m\leq0$, and $\lambda\leq\Lambda$,
\[
-\Delta u_\lambda
=
\lambda u_\lambda+m(x)u_\lambda^\alpha
\leq\Lambda_+u_\lambda,
\qquad
\Lambda_+:=\max\{\Lambda,0\}.
\]
Thus $u_\lambda$ is a non-negative weak subsolution of
$-\Delta u=\Lambda_+u$. The standard $L^\infty$ estimate gives
\[
\|u_\lambda\|_{L^\infty(\Omega)}
\leq C_0\|u_\lambda\|_{L^2(\Omega)}
\leq C_0C_P\|u_\lambda\|,
\]
where the last inequality follows from Poincar\'e's inequality.
This proves \eqref{Estimate Linfinity}.$\fin$
\par
\medskip

Taking $\Lambda=\lambda_{1,\Omega^0}$, the constant in
\eqref{Estimate Linfinity} is uniform for
$\lambda<\lambda_{1,\Omega^0}$. Hence Theorem~\ref{thm:Main2}\textup{(iii)}
immediately gives:

\begin{coro}
\label{Coro L-infinity}
Under the assumptions of Theorem~\ref{thm:Main2}, for
$\lambda\in(\lambda_{1,\Omega},\lambda_{1,\Omega^0})$,
\[
\|u_\lambda^-\|_{L^\infty(\Omega)}
\rightarrow0
\qquad\text{as }\lambda\uparrow\lambda_{1,\Omega^0}.
\]
\end{coro}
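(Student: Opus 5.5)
The corollary follows by combining the uniform $L^\infty$ bound of Proposition~\ref{prop:linftyEstimate} with the vanishing of the $H_0^1$ norm in Theorem~\ref{thm:Main2}\textup{(iii)}.

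First, fix $\Lambda:=\lambda_{1,\Omega^0}$. The hypotheses of Theorem~\ref{thm:Main2} include $m\leq0$ a.e. in $\Omega$. For each $\lambda\in(\lambda_{1,\Omega},\lambda_{1,\Omega^0})$, the function $u_\lambda^-$ is a non-negative weak solution of \eqref{Eq1} with $\lambda\leq\Lambda$. Proposition~\ref{prop:linftyEstimate} therefore applies with a constant $C=C(\Omega,N,\lambda_{1,\Omega^0})$ that does not depend on $\lambda$, and gives
\[
\|u_\lambda^-\|_{L^\infty(\Omega)}\leq C\|u_\lambda^-\|.
\]
The only point to check is this uniformity. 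Since $\lambda_{1,\Omega^0}>0$, we have $\Lambda_+=\lambda_{1,\Omega^0}$, so the subsolution inequality $-\Delta u_\lambda^-\leq \lambda_{1,\Omega^0}u_\lambda^-$ holds with a fixed coefficient. The local-boundedness (Moser / De Giorgi) constant $C_0$ therefore depends only on $\Omega$, $N$ and $\lambda_{1,\Omega^0}$.

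Second, Theorem~\ref{thm:Main2}\textup{(iii)} gives $\|u_\lambda^-\|\to0$ as $\lambda\uparrow\lambda_{1,\Omega^0}$. That result was proved along arbitrary sequences $\lambda_n\uparrow\lambda_{1,\Omega^0}$ and for any choice of minimizer, so it holds for the whole family. Passing to the limit in the displayed inequality then gives $\|u_\lambda^-\|_{L^\infty(\Omega)}\to0$.

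There is no real obstacle here. The one step deserving care is the $L^\infty$ bound for subsolutions that is used inside Proposition~\ref{prop:linftyEstimate}. The relevant form is the global estimate $\sup_\Omega u\leq C\|u\|_{L^2(\Omega)}$ for non-negative $H_0^1$ subsolutions of $-\Delta u\leq cu$ with bounded $c$ (Gilbarg--Trudinger, Theorem~8.15). It is valid in every dimension, which is why the corollary holds without any restriction on $N$.
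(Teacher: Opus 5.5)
Your argument is correct and coincides with the paper's proof. The paper applies Proposition~\ref{prop:linftyEstimate} with $\Lambda=\lambda_{1,\Omega^0}$ to get $\|u_\lambda^-\|_{L^\infty(\Omega)}\leq C\|u_\lambda^-\|$ with $C$ independent of $\lambda$, and concludes from $\|u_\lambda^-\|\to0$ in Theorem~\ref{thm:Main2}\textup{(iii)}; your remarks on why the constant is uniform (the global Moser/De~Giorgi bound) and on (iii) holding for the whole family only make explicit what the paper leaves implicit.
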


\proof
By Proposition~\ref{prop:linftyEstimate},
\[
\|u_\lambda^-\|_{L^\infty(\Omega)}
\leq C\|u_\lambda^-\|,
\]
with $C$ independent of $\lambda<\lambda_{1,\Omega^0}$.
The conclusion follows from Theorem~\ref{thm:Main2} $iii).\fin$

\begin{rem}
The preceding $L^\infty$ convergence will be crucial for proving compactness
of $\operatorname{supp}u_\lambda^-$ when
$\lambda\in[\lambda_{1,\Omega^0}-\varepsilon,\lambda_{1,\Omega^0}]$;
see Corollary~\ref{Coro compact supp}.
\end{rem}

\noindent{\sc Proof of Theorem~\ref{thm1}.}

\medskip
\noindent
\textit{Part $i)$.}
If $|\Omega^+|>0$, then $\Theta^+\neq\varnothing$. For every
$\lambda<\lambda^*$, Lemma~\ref{ExisHomMin}(i) yields a minimizer
$v_\lambda^+\in\Theta^+\cap S$ of
\[
\widehat J_\lambda^+
=
\inf_{v\in\Theta^+\cap S}J_\lambda^+(v).
\]
Set
\[
u_\lambda^+:=t_\lambda(v_\lambda^+)v_\lambda^+,
\qquad
t_\lambda(v)
=
\left(\frac{M(v)}{H_\lambda(v)}\right)^{1/(1-\alpha)}.
\]
By Proposition~\ref{prop:crJE2} $i)$, $u_\lambda^+$ is a non-negative weak
solution of \eqref{Eq1}, with
\[
E_\lambda(u_\lambda^+)
=
\widehat E_\lambda^+
=
\widehat J_\lambda^+<0,
\qquad
E_\lambda''(u_\lambda^+)>0.
\]
Lemma~\ref{Lemm3.3} gives the required regularity.
Continuity and strict monotonicity of
$\lambda\mapsto E_\lambda(u_\lambda^+)$ follow from
Corollary~\ref{cor:monot}, while Lemma~\ref{lem:corD}(b) yields
\[
E_\lambda(u_\lambda^+)\longrightarrow0
\qquad\text{as }\lambda\to-\infty.
\]
This proves $i)$.

\medskip
\noindent
\textit{Part $ii)$.}
The bifurcation at the infinity is consequence from \cite{Dias-Hern} and \cite{Hernandez tesis}. Assume
\[
\int_\Omega m(x)\varphi_{1,\Omega}^{\alpha+1}\,dx<0,
\qquad
|\Omega^+\cup\Omega^0|>0.
\]
Then
\[
0<\lambda_{1,\Omega}<\lambda^*<+\infty.
\]
For every
$\lambda\in(\lambda_{1,\Omega},\lambda^*)$,
Lemma~\ref{ExisHomMin}(ii) gives a minimizer
$v_\lambda^-\in\Theta_\lambda^-\cap S$. Setting
\[
u_\lambda^-:=t_\lambda(v_\lambda^-)v_\lambda^-,
\]
Proposition~\ref{prop:crJE2}(ii) gives a non-negative weak solution with
\[
E_\lambda(u_\lambda^-)
=
\widehat E_\lambda^-
=
\widehat J_\lambda^->0,
\qquad
E_\lambda''(u_\lambda^-)<0.
\]
Regularity follows from Lemma~\ref{Lemm3.3}.
Corollary~\ref{cor:monot} yields continuity and strict monotonicity of
\[
(\lambda_{1,\Omega},\lambda^*)\ni\lambda
\longmapsto E_\lambda(u_\lambda^-).
\]
Moreover, Lemma~\ref{lem:corD}(a) gives
\[
E_\lambda(u_\lambda^-)\longrightarrow+\infty
\qquad\text{as }\lambda\downarrow\lambda_{1,\Omega},
\]
while, under condition \textbf{(M)}, Lemma~\ref{solextr}(b) gives
\[
E_\lambda(u_\lambda^-)\longrightarrow0
\qquad\text{as }\lambda\uparrow\lambda^*.
\]
It remains to justify the endpoint limit in the degenerate case without
condition \textbf{(M)}. Assume $m\leq0$ a.e. in $\Omega$ and that
$\Omega^0=\operatorname{int}\{m=0\}$ is a nonempty connected Lipschitz
domain compactly contained in $\Omega$.Assume also that
\[
m<0
\qquad\text{a.e. in }
\Omega\setminus\overline{\Omega^0}.
\]
By Theorem~\ref{thm:ExisExtr}(2),
\begin{equation}
\lambda^*=\lambda_{1,\Omega^0}.
\label{landaEstrella}
\end{equation}
Let $\phi:=\widetilde{\varphi}_{1,\Omega^0}$ be the zero extension of a
first eigenfunction of $\Omega^0$. Then
\begin{equation}
H_{\lambda^*}(\phi)=0,
\qquad
M(\phi)=0.
\label{Hfi}
\end{equation}
We claim that there exists $\zeta\in H_0^1(\Omega)$ such that
\begin{equation}
B_*(\phi,\zeta)<0,
\qquad
M(\zeta)<0,
\label{BEstrella}
\end{equation}
where
\[
B_*(v,w)
:=
\int_\Omega\nabla v\cdot\nabla w\,dx
-\lambda^*\int_\Omega vw\,dx.
\]
Indeed, the functional
$\zeta\mapsto B_*(\phi,\zeta)$ cannot vanish identically; otherwise
$\phi$ would satisfy
\[
-\Delta\phi=\lambda^*\phi
\]
weakly in all of $\Omega$, contradicting the strong maximum principle,
since $\phi$ vanishes identically in
$\Omega\setminus\overline{\Omega^0}$.
Thus there exists $\zeta_0\in H_0^1(\Omega)$ such that $B_*(\phi,\zeta_0)<0$. Choose a nonzero function $\eta\in C_c^\infty
\bigl(\Omega\setminus\overline{\Omega^0}\bigr).$ Since $m<0$ a.e. in
$\Omega\setminus\overline{\Omega^0}$, $M(\eta)<0,$ whereas, because $\phi=0$ a.e. on the support of $\eta$, $B_*(\phi,\eta)=0$.
Set $\zeta:=\zeta_0+s\eta$. For $s>0$ sufficiently large,
\[
M(\zeta)<0,
\qquad
B_*(\phi,\zeta)
=
B_*(\phi,\zeta_0)<0.
\]
This proves \eqref{BEstrella}.
Set
\[
\delta:=\lambda^*-\lambda>0,
\qquad
\varepsilon_\lambda:=K\delta,
\qquad
w_\lambda:=\phi+\varepsilon_\lambda\zeta.
\]
Since $m=0$ on $\Omega^0$ and $\phi=0$ outside $\Omega^0$,
\begin{equation}
M(w_\lambda)
=
K^{\alpha+1}\delta^{\alpha+1}M(\zeta)<0.
\label{MK}
\end{equation}
Moreover,
\[
H_\lambda(w_\lambda)
=
2\varepsilon_\lambda B_*(\phi,\zeta)
+\varepsilon_\lambda^2H_{\lambda^*}(\zeta)
+\delta\|\phi+\varepsilon_\lambda\zeta\|_2^2,
\]
and hence
\begin{equation}
H_\lambda(w_\lambda)
=
\delta\bigl[\|\phi\|_2^2+2KB_*(\phi,\zeta)\bigr]
+O(\delta^2).
\label{eq:H-wlambda-expansion}
\end{equation}
Choosing $K$ sufficiently large gives
\begin{equation}
H_\lambda(w_\lambda)\leq-c\delta<0
\qquad(0<\delta<\delta_0)
\label{Hdelta}
\end{equation}
for suitable $c,\delta_0>0$.
Thus $w_\lambda\in\Theta_\lambda^-$ for $\lambda$ close to $\lambda^*$.
By minimality and the homogeneity of $J_\lambda^-$,
\[
0<E_\lambda(u_\lambda^-)
\leq J_\lambda^-(w_\lambda).
\]
Using \eqref{j22}, \eqref{MK}, and \eqref{Hdelta},
\[
J_\lambda^-(w_\lambda)
\leq
C\delta^{(1+\alpha)/(1-\alpha)}.
\]
Therefore
\begin{equation}
E_\lambda(u_\lambda^-)\longrightarrow0
\qquad\text{as }
\lambda\uparrow\lambda^*
=
\lambda_{1,\Omega^0}.
\label{Egoes0}
\end{equation}
Thus the degenerate endpoint relation holds without condition
\textbf{(M)}. This completes the proof of Theorem~\ref{thm1}.$\fin$

\section{Compact support solutions}

As pointed out in the Introduction, our main goal is to identify, among the
non-negative solutions obtained by the Nehari manifold method, flat
solutions satisfying $u>0$ in $\Omega$ and
$\partial u/\partial n=0$ on $\partial\Omega$, as well as solutions
$u\geq0$ with $\operatorname{supp}u\Subset\Omega$.

In the one-dimensional case $N=1$, classical energy methods provide a
complete description of the solution set. For \eqref{Eq1} with
$\Omega=(-1,1)$ and $m\equiv-1$, the branch of non-negative solutions
bifurcating from $\lambda_1$ passes from positive solutions with
$u'(\pm1)\neq0$ to compactly supported solutions. All these solutions are
unstable. At a unique intermediate value $\lambda^*$, the branch contains
a flat positive solution satisfying $u'(\pm1)=0$; see
\cite{DH-CRAS,DH Portugalia}. Interestingly, for $m\equiv-1$ and $N\geq3$,
stable compactly supported solutions may also occur; see \cite{DHI China}.

For $N>1$, only partial results seem to be available. In
\cite{DHIlyaNonlinear}, for $m\leq0$ and star-shaped domains, we derived
through a Pohozaev-type identity a necessary condition for solutions
satisfying $\partial u/\partial n=0$ on $\partial\Omega$; see also
\cite{DHI China}. We now obtain sufficient conditions for compact support,
using the local supersolution method of \cite{Diaz Pitman}. The following
result extends \cite[Theorem~5.1]{DHIlyaNonlinear}.

For
$q_0\in(0,\|m^-\|_{L^\infty(\Omega)}]$, where
$m^-(x):=\max\{-m(x),0\}$, set
\[
\Omega_{q_0}^-:=\{x\in\Omega:m^-(x)\geq q_0\}.
\]
For a non-negative solution $u_\lambda$ of \eqref{Eq1} and $M>0$, we also
write
\[
[0\leq u_\lambda\leq M]
:=
\{x\in\Omega:0\leq u_\lambda(x)\leq M\}.
\]

Since $0<\alpha<1$, the function
$f_{\lambda,q_0}(s):=q_0s^\alpha-\lambda s$
is non-decreasing on $[0,\delta_{\lambda,q_0}]$, where
\begin{equation}\label{eq:delta}
\delta_{\lambda,q_0}
:=
\left(\frac{\alpha q_0}{\lambda}\right)^{\frac1{1-\alpha}}
\quad\text{if }\lambda>0,
\qquad
\delta_{\lambda,q_0}:=+\infty
\quad\text{if }\lambda\leq0.
\end{equation}
Indeed,
$f_{\lambda,q_0}'(s)=\alpha q_0s^{\alpha-1}-\lambda\geq0$
for $0<s\leq\delta_{\lambda,q_0}$.

For $\mu>0$, define
\[
\psi_{\mu,q_0}(\tau)
:=
\frac1{\sqrt{2\mu}}
\int_0^\tau
\frac{ds}{
\sqrt{\displaystyle
	\frac{q_0}{\alpha+1}s^{\alpha+1}-\frac{\lambda}{2}s^2}},
\qquad
0\leq\tau<\delta_{\lambda,q_0}.
\]
The integrand is positive on $(0,\delta_{\lambda,q_0})$, and its
singularity at $0$ is integrable. Hence $\psi_{\mu,q_0}$ is well defined,
continuous, and strictly increasing.

We shall use the following local dead-core criterion.

\begin{theo}
\label{thm:Comp}
Let $u_\lambda\in H_0^1(\Omega)\cap C(\overline\Omega)$ be a non-negative
weak solution of \eqref{Eq1}, and let $G\subset\Omega$ be open. Assume
\begin{equation}\label{eq:G-weight}
	-m(x)\geq q_0
	\qquad\text{for a.e. }x\in G
\end{equation}
and
\begin{equation}\label{eq:G-smallness}
	0\leq u_\lambda(x)\leq\delta_{\lambda,q_0}
	\qquad\text{for every }x\in G.
\end{equation}
If $\lambda>0$, assume additionally that
$\|u_\lambda\|_{L^\infty(\Omega)}<\delta_{\lambda,q_0}$.

For $x_0\in G$, set
\[
d_G(x_0)
:=
\operatorname{dist}(x_0,\partial G\setminus\partial\Omega),
\]
with $d_G(x_0)=+\infty$ if
$\partial G\setminus\partial\Omega=\emptyset$.
If
\begin{equation}\label{eq:Hsfb}
	d_G(x_0)
	\geq
	\psi_{1/N,q_0}
	\bigl(\|u_\lambda\|_{L^\infty(\Omega)}\bigr),
\end{equation}
then $u_\lambda(x_0)=0$. Consequently,
\[
\{u_\lambda>0\}\cap G
\subset
\left\{
x\in G:
d_G(x)<
\psi_{1/N,q_0}
\bigl(\|u_\lambda\|_{L^\infty(\Omega)}\bigr)
\right\},
\]
and therefore
\[
\operatorname{supp}u_\lambda\cap G
\subset
\left\{
x\in G:
d_G(x)\leq
\psi_{1/N,q_0}
\bigl(\|u_\lambda\|_{L^\infty(\Omega)}\bigr)
\right\}.
\]
\end{theo}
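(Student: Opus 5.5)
The proof is a local comparison with an explicit radial supersolution centered at $x_0$, in the spirit of \cite{Diaz Pitman}. Write $R:=\psi_{1/N,q_0}(\|u_\lambda\|_{L^\infty(\Omega)})$ and $\mu=1/N$. We may assume $R<\infty$. We may also assume $\|u_\lambda\|_\infty>0$, since otherwise there is nothing to prove.

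The barrier is $\bar U(x):=\psi_{\mu,q_0}^{-1}(|x-x_0|)$ for $|x-x_0|\le R$. Thus $\bar U(x)=\Phi(r)$ with $r=|x-x_0|$, where $\Phi$ is the inverse of $\psi_{\mu,q_0}$. By construction $\Phi(0)=0$, $\Phi(R)=\|u_\lambda\|_\infty$, and $\Phi$ is increasing with values in $[0,\delta_{\lambda,q_0})$. Differentiating the defining integral gives
\[
\Phi'(r)^2=2\mu\Bigl(\tfrac{q_0}{\alpha+1}\Phi^{\alpha+1}-\tfrac{\lambda}{2}\Phi^2\Bigr).
\]
Differentiating once more (where $\Phi'>0$) yields $\Phi''=\mu f_{\lambda,q_0}(\Phi)$, with $f_{\lambda,q_0}(s)=q_0s^\alpha-\lambda s$. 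Also $\Phi'\ge0$, and $f_{\lambda,q_0}(\Phi)\ge0$ because $f$ is non-decreasing on $[0,\delta_{\lambda,q_0}]$ and $f(0)=0$. The key computation is then
\[
-\Delta\bar U+\lambda\bar U-q_0\bar U^\alpha=-\Phi''-\tfrac{N-1}{r}\Phi'-f(\Phi)\le-\Phi''-\tfrac{N-1}{r}\Phi'+ N\Phi''-f(\Phi)\cdot 1 .
\]
To handle the term $\frac{N-1}{r}\Phi'$, I would use the standard inequality $\Phi'/r\ge\Phi''$. It holds because $\Phi'(0)=0$ and $\Phi'$ is convex-like: since $\Phi''=\mu f(\Phi)$ is non-decreasing in $r$, we get $\Phi'(r)=\int_0^r\Phi''\le r\Phi''(r)$. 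This gives $\Delta\bar U=\Phi''+\frac{N-1}r\Phi'\le N\Phi''=N\mu f(\Phi)=f(\Phi)$. Hence $\bar U$ satisfies $-\Delta\bar U\ge\lambda\bar U-q_0\bar U^\alpha$ in $B_R(x_0)$. The sign has to be checked carefully: I need $\Phi'/r\le\Phi''$ to get $\Delta\bar U\le N\Phi''$, and this is exactly what the monotonicity of $\Phi''$ provides. Meanwhile $u_\lambda$ satisfies $-\Delta u_\lambda=\lambda u_\lambda+mu_\lambda^\alpha\le\lambda u_\lambda-q_0u_\lambda^\alpha$ on $G$.

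The comparison is carried out on $D:=B_R(x_0)\cap\Omega$. By \eqref{eq:Hsfb}, $B_R(x_0)$ does not meet $\partial G\setminus\partial\Omega$, so $D\subset G$ and $D$ is contained in the part of $G$ adjacent only to $\partial\Omega$. On $\partial D\cap\partial\Omega$ we have $u_\lambda=0\le\bar U$. On $\partial B_R\cap\Omega$ we have $u_\lambda\le\|u_\lambda\|_\infty=\bar U$. Now test the difference of the two inequalities with $(u_\lambda-\bar U)^+\in H^1_0(D)$. The zero-order part is $\lambda s-q_0s^\alpha=-f(s)$, which is non-increasing on $[0,\delta_{\lambda,q_0}]$, and both functions take values in this interval by \eqref{eq:G-smallness} and the extra $L^\infty$ assumption when $\lambda>0$. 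Therefore
\[
\int_D|\nabla(u_\lambda-\bar U)^+|^2\le\int_D\bigl(f(\bar U)-f(u_\lambda)\bigr)(u_\lambda-\bar U)^+\le0,
\]
which gives $u_\lambda\le\bar U$ in $D$. Evaluating at $x_0$ yields $u_\lambda(x_0)\le\Phi(0)=0$; continuity of $u_\lambda$ makes this pointwise statement legitimate. The inclusion statements follow immediately by applying this at every point of $G$ satisfying \eqref{eq:Hsfb} and taking closures. When $\lambda\le0$, $f$ is increasing on all of $[0,\infty)$, so the argument is the same without the extra restriction.

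The main obstacle is the supersolution verification, specifically the handling of the $(N-1)\Phi'/r$ term and the regularity of $\bar U$ at $x_0$. Near $x_0$, $\Phi$ behaves like $cr^{2/(1-\alpha)}$, so $\bar U\in C^1$ with $\nabla\bar U(x_0)=0$, and $\Delta\bar U$ is locally integrable. The inequality therefore holds in the weak sense across the center, which I would justify by integrating against test functions on $B_R\setminus B_\epsilon$ and letting $\epsilon\to0$.
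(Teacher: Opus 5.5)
Your proposal is correct and follows essentially the same route as the paper: the radial barrier $\psi_{1/N,q_0}^{-1}(|x-x_0|)$ on $B_R(x_0)\cap\Omega$ is compared with $u_\lambda$ using that $s\mapsto q_0s^\alpha-\lambda s$ is non-decreasing on $[0,\delta_{\lambda,q_0}]$, except that you verify the supersolution inequality yourself (via $\Phi'(r)\le r\Phi''(r)$, which is exactly where $\mu=1/N$ enters) and carry out the comparison with the test function $(u_\lambda-\bar U)^+$, whereas the paper cites D\'iaz's local barrier theorem and a generic comparison principle. There are sign slips in your ``key computation'' display and in the first-stated inequality $\Phi'/r\ge\Phi''$, but the inequality you actually derive and use, $\Phi'/r\le\Phi''$, which gives $\Delta\bar U\le f_{\lambda,q_0}(\bar U)$, is the correct one.
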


\proof
Set
\[
R:=
\psi_{1/N,q_0}
\bigl(\|u_\lambda\|_{L^\infty(\Omega)}\bigr).
\]
If $R=0$, the conclusion is immediate. Assume $R>0$. By
\eqref{eq:Hsfb},
$D_R(x_0):=B_R(x_0)\cap\Omega\subset G$.
Let $\eta_{1/N,q_0}:=\psi_{1/N,q_0}^{-1}$ and define
\[
U(x):=\eta_{1/N,q_0}(|x-x_0|),
\qquad x\in D_R(x_0).
\]
Then $U(x_0)=0$,
$0\leq U\leq\|u_\lambda\|_{L^\infty(\Omega)}$ in $D_R(x_0)$, and
$U=\|u_\lambda\|_{L^\infty(\Omega)}$ on
$\partial B_R(x_0)\cap\Omega$. By the local barrier construction in
\cite[Theorem~1.5]{Diaz Pitman},
\begin{equation}\label{eq:barrier-super}
	-\Delta U+q_0U^\alpha-\lambda U\geq0
	\qquad\text{in }D_R(x_0).
\end{equation}	
On the other hand, \eqref{Eq1} and \eqref{eq:G-weight} give
\[
-\Delta u_\lambda+q_0u_\lambda^\alpha-\lambda u_\lambda
=
(m(x)+q_0)u_\lambda^\alpha\leq0
\qquad\text{in }D_R(x_0).
\]
Moreover, $u_\lambda\leq U$ on $\partial D_R(x_0)$: this follows from
the definition of $U$ on the spherical part and from the homogeneous
Dirichlet condition on $\partial\Omega$.

Since $s\mapsto q_0s^\alpha-\lambda s$ is non-decreasing on
$[0,\delta_{\lambda,q_0}]$, the comparison principle gives
$0\leq u_\lambda\leq U$ in $D_R(x_0)$. Hence
$0\leq u_\lambda(x_0)\leq U(x_0)=0$, proving the assertion.$\fin$
\par
\medskip
We can now prove Theorem~\ref{thm3}.
\par
\noindent
{\sc Proof of Theorem~\ref{thm3}}
Choose $q_0\in(0,m_{\rho_0}^-)$.
By \eqref{eq:negative-boundary-weight},
$-m(x)\geq q_0$ a.e. in $\mathcal C_{\rho_0}$. Set
\[
\rho:=\frac{\rho_0}{4},
\qquad
G:=\mathcal C_{\rho_0/2}
=
\left\{
x\in\Omega:
\operatorname{dist}(x,\partial\Omega)<\frac{\rho_0}{2}
\right\}.
\]
Then $G\subset\mathcal C_{\rho_0}\subset\Omega_{q_0}^-$ and, for every
$x_0\in\mathcal C_\rho$,
\begin{equation}\label{eq:collar-distance}
	\operatorname{dist}
	(x_0,\partial G\setminus\partial\Omega)
	\geq\frac{\rho_0}{4}.
\end{equation}	
By Theorem~\ref{thm:Main2}\textup{(iii)} and
Corollary~\ref{Coro L-infinity},
\[
\|u_\lambda^-\|_{L^\infty(\Omega)}
\longrightarrow0
\qquad
\text{as }\lambda\uparrow\lambda_{1,\Omega^0}.
\]
Hence, for $\lambda$ sufficiently close to $\lambda_{1,\Omega^0}$,
$\|u_\lambda^-\|_\infty<\delta_{\lambda,q_0}$ and
\begin{equation}\label{eq:psi-small}
	\psi_{1/N,q_0}
	\bigl(\|u_\lambda^-\|_{L^\infty(\Omega)}\bigr)
	<
	\frac{\rho_0}{4}.
\end{equation}
Thus there exists
\[
0<\varepsilon<
\lambda_{1,\Omega^0}-\lambda_{1,\Omega}
\]
such that these inequalities hold whenever
$\lambda\in
(\lambda_{1,\Omega^0}-\varepsilon,\lambda_{1,\Omega^0})$.	
For such $\lambda$ and $x_0\in\mathcal C_\rho$,
\eqref{eq:collar-distance} and~\eqref{eq:psi-small} allow us to apply
Theorem~\ref{thm:Comp}, yielding $u_\lambda^-(x_0)=0$.
Therefore
\[
u_\lambda^-=0\quad\text{in }\mathcal C_\rho,
\qquad
\operatorname{supp}u_\lambda^-
\subset
\{x\in\Omega:\operatorname{dist}(x,\partial\Omega)\geq\rho\}
\Subset\Omega.
\]
\fineq

As announced in Remark~13 of
\cite{DiazHernandezIlyasov2026}, we next give a useful
$L^\infty$ estimate.

\begin{lemma}
\label{lem:Linfty-negative-lambda}
Assume that $0<\alpha<1$, $m\in L^\infty(\Omega)$, and $\lambda<0$.
If $u_\lambda$ is a non-negative weak solution of \eqref{Eq1}, then
\begin{equation}\label{eq:Linfty-negative-lambda}
	\|u_\lambda\|_{L^\infty(\Omega)}
	\leq
	\left(
	\frac{\|m^+\|_{L^\infty(\Omega)}}{|\lambda|}
	\right)^{\frac1{1-\alpha}}.
\end{equation}
In particular,
$\|u_\lambda\|_{L^\infty(\Omega)}\to0$ as $\lambda\to-\infty$.
\end{lemma}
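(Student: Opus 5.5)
We argue at a maximum point of $u_\lambda$, using the regularity from Lemma~\ref{Lemm3.3}. Since $u_\lambda\in C^{1,\gamma}(\overline\Omega)$ and $u_\lambda=0$ on $\partial\Omega$, the maximum $K:=\|u_\lambda\|_{L^\infty(\Omega)}$ is attained at an interior point whenever $u_\lambda\not\equiv0$. If $u_\lambda\equiv0$ there is nothing to prove. The pointwise version of the argument would need $u_\lambda\in C^2$ near the maximum, which is not available because $m$ is only $L^\infty$. We therefore use a weak comparison (Stampacchia truncation) instead.

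Set
\[
k:=\left(\frac{\|m^+\|_{L^\infty(\Omega)}}{|\lambda|}\right)^{\frac1{1-\alpha}},
\]
and suppose $K>k$. Test the weak formulation with $\varphi:=(u_\lambda-k)^+\in H_0^1(\Omega)$. This is admissible because $k\geq0$ and $u_\lambda=0$ on $\partial\Omega$. We obtain
\[
\int_\Omega|\nabla(u_\lambda-k)^+|^2\,dx
=\int_{\{u_\lambda>k\}}\bigl(\lambda u_\lambda+m(x)u_\lambda^\alpha\bigr)(u_\lambda-k)\,dx .
\]

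On the set $\{u_\lambda>k\}$ we have $u_\lambda>0$. Using $\lambda=-|\lambda|$ and $m\leq m^+\leq\|m^+\|_\infty$, this gives
\[
\lambda u_\lambda+m u_\lambda^\alpha\leq u_\lambda^\alpha\bigl(\|m^+\|_\infty-|\lambda|u_\lambda^{1-\alpha}\bigr)<u_\lambda^\alpha\bigl(\|m^+\|_\infty-|\lambda|k^{1-\alpha}\bigr)=0 .
\]
The right-hand side of the tested identity is then $\leq0$, so $(u_\lambda-k)^+$ is constant. Since it lies in $H_0^1(\Omega)$, it vanishes, which means $u_\lambda\leq k$ a.e. This yields \eqref{eq:Linfty-negative-lambda}, and letting $\lambda\to-\infty$ gives the final assertion.

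The degenerate case $m^+\equiv0$ is covered by the same computation. There $k=0$ and the integrand is strictly negative on $\{u_\lambda>0\}$, which forces $u_\lambda\equiv0$, consistent with the bound. The only point needing care is the sign of the integrand on the superlevel set, and that step is elementary; I expect no real obstacle beyond checking that the test function is admissible.
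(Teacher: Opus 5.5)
Your proof is correct and is essentially the paper's argument: you test the equation with the Stampacchia truncation $(u_\lambda-k)^+$ and observe that $\lambda u_\lambda+m\,u_\lambda^\alpha<0$ on $\{u_\lambda>k\}$, which forces the truncation to vanish. The opening discussion of interior maxima and the contradiction hypothesis $K>k$ are unnecessary, since the truncation argument stands on its own.
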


\proof
Set
\[
K_\lambda
:=
\left(
\frac{\|m^+\|_{L^\infty(\Omega)}}{|\lambda|}
\right)^{\frac1{1-\alpha}}
\]
and let $w:=(u_\lambda-K_\lambda)^+\in H_0^1(\Omega)$.
Testing \eqref{Eq1} by $w$ gives
\[
\int_{\{u_\lambda>K_\lambda\}}|\nabla w|^2\,dx
\leq
\int_{\{u_\lambda>K_\lambda\}}
\bigl(
-|\lambda|u_\lambda
+\|m^+\|_\infty u_\lambda^\alpha
\bigr)w\,dx.
\]
On $\{u_\lambda>K_\lambda\}$,
\[
u_\lambda^{1-\alpha}
>
K_\lambda^{1-\alpha}
=
\frac{\|m^+\|_\infty}{|\lambda|},
\]
so the integrand on the right is negative. Hence
$\int|\nabla w|^2\leq0$, and therefore $w=0$.
Thus $u_\lambda\leq K_\lambda$ a.e. in $\Omega$, proving
\eqref{eq:Linfty-negative-lambda}.$\fin$

We can now complete the proof of Theorem~\ref{thm4}.
\par
\noindent
{\sc Proof of Theorem~\ref{thm4}}
Choose $q_0\in(0,m_\partial^-)$.
By \eqref{eq:negative-boundary-collar-thm4} and
\eqref{eq:uniform-negative-boundary-thm4},
$-m(x)\geq q_0$ a.e. in $\mathcal C_{\rho_0}$.	Set
\[
\rho:=\frac{\rho_0}{4},
\qquad
G:=\mathcal C_{\rho_0/2}.
\]
Then $G\subset\mathcal C_{\rho_0}\subset\Omega_{q_0}^-$ and, for every
$x_0\in\mathcal C_\rho$,
\begin{equation}\label{eq:distance-thm4}
	\operatorname{dist}
	(x_0,\partial G\setminus\partial\Omega)
	\geq\frac{\rho_0}{4}.
\end{equation}	
By Lemma~\ref{lem:Linfty-negative-lambda},
$\|u_\lambda^+\|_{L^\infty(\Omega)}\to0$ as $\lambda\to-\infty$.
Therefore
\[
\psi_{1/N,q_0}
\bigl(\|u_\lambda^+\|_{L^\infty(\Omega)}\bigr)
\longrightarrow0.
\]
Hence there exists $\sigma<0$ such that, for every $\lambda<\sigma$,
\begin{equation}\label{eq:psi-small-thm4}
	\psi_{1/N,q_0}
	\bigl(\|u_\lambda^+\|_{L^\infty(\Omega)}\bigr)
	<
	\frac{\rho_0}{4}.
\end{equation}
Combining \eqref{eq:distance-thm4} and \eqref{eq:psi-small-thm4},
Theorem~\ref{thm:Comp} gives $u_\lambda^+=0$ in $\mathcal C_\rho$.
Consequently,
\[
\operatorname{supp}u_\lambda^+
\subset
\{x\in\Omega:\operatorname{dist}(x,\partial\Omega)\geq\rho\}
\Subset\Omega,
\]
so $u_\lambda^+$ has compact support in $\Omega.\fin$

\begin{coro}
\label{Coro compact supp}
Assume that $0<\alpha<1$, $m\leq0$ a.e. in $\Omega$,
$|\Omega^0|>0$, and $m\in L^\infty(\Omega)$. Suppose that there exist
$\rho_0>0$ and $m_{\rho_0}^-$ such that
\[
\mathcal C_{\rho_0}
:=
\{x\in\Omega:\operatorname{dist}(x,\partial\Omega)<\rho_0\}
\subset\Omega^-,
\qquad
-m(x)\geq m_{\rho_0}^-
\quad\text{a.e. in }\mathcal C_{\rho_0},
\]
and assume that
$\lambda_{1,\Omega}<\lambda_{1,\Omega^0}$.
Then there exist $\varepsilon>0$ and $\rho\in(0,\rho_0)$ such that, for
every
$\lambda\in
(\lambda_{1,\Omega^0}-\varepsilon,\lambda_{1,\Omega^0})$,
\[
u_\lambda^-=0
\quad\text{in }
\{x\in\Omega:\operatorname{dist}(x,\partial\Omega)<\rho\}.
\]
In particular,
\[
\operatorname{supp}u_\lambda^-
\subset
\{x\in\Omega:\operatorname{dist}(x,\partial\Omega)\geq\rho\}
\Subset\Omega.
\]
\end{coro}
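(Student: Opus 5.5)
The plan is to reduce Corollary~\ref{Coro compact supp} to the argument already used for Theorem~\ref{thm3}. That argument needs two ingredients: uniform $L^\infty$-smallness of $u_\lambda^-$ as $\lambda\uparrow\lambda_{1,\Omega^0}$, and the local dead-core criterion of Theorem~\ref{thm:Comp} applied on a fixed boundary collar.

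First I check that the branch $u_\lambda^-$ exists on a left neighbourhood of $\lambda_{1,\Omega^0}$.
\begin{itemize}
\item Since $\mathcal C_{\rho_0}\subset\Omega^-$ has positive measure, $m\le0$ and $\varphi_{1,\Omega}>0$, we get $\int_\Omega m\varphi_{1,\Omega}^{\alpha+1}\,dx<0$.
\item With $|\Omega^0|>0$, Corollary~\ref{lem:extr}(ii) then applies.
\item Under the standing zero-set and regularity assumptions on $\Omega^0$ used in Theorem~\ref{thm:Main2}, Theorem~\ref{thm:ExisExtr}(ii) gives $\lambda^*=\lambda_{1,\Omega^0}$.
\item The interval $(\lambda_{1,\Omega},\lambda_{1,\Omega^0})$ is nonempty by the hypothesis $\lambda_{1,\Omega}<\lambda_{1,\Omega^0}$.
\end{itemize}
So $u_\lambda^-$ is defined there. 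Theorem~\ref{thm:Main2}(iii) gives $\|u_\lambda^-\|\to0$ as $\lambda\uparrow\lambda_{1,\Omega^0}$. Proposition~\ref{prop:linftyEstimate} with $\Lambda=\lambda_{1,\Omega^0}$ gives $\|u_\lambda^-\|_{L^\infty}\le C\|u_\lambda^-\|$ with $C$ independent of $\lambda$. Together these yield Corollary~\ref{Coro L-infinity}: $\|u_\lambda^-\|_{L^\infty(\Omega)}\to0$.

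Next comes the barrier step. I would do the following.
\begin{itemize}
\item Fix $q_0\in(0,m_{\rho_0}^-]$ and set $G:=\mathcal C_{\rho_0/2}$ and $\rho:=\rho_0/4$.
\item Every $x_0\in\mathcal C_\rho$ then satisfies $d_G(x_0)\ge\rho_0/4$.
\item For $\lambda\in(\lambda_{1,\Omega},\lambda_{1,\Omega^0})$ we have $\lambda>0$, and $\delta_{\lambda,q_0}\ge(\alpha q_0/\lambda_{1,\Omega^0})^{1/(1-\alpha)}=:\delta_*>0$ uniformly in $\lambda$.
\item Choose $\varepsilon>0$, with $\varepsilon<\lambda_{1,\Omega^0}-\lambda_{1,\Omega}$, such that for $\lambda\in(\lambda_{1,\Omega^0}-\varepsilon,\lambda_{1,\Omega^0})$ both $\|u_\lambda^-\|_\infty<\delta_*$ and $\psi_{1/N,q_0}(\|u_\lambda^-\|_\infty)<\rho_0/4$ hold.
\end{itemize}
The second condition needs $\psi_{1/N,q_0}(\tau)\to0$ as $\tau\to0$ uniformly in $\lambda$ in this range. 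I obtain this by bounding the integrand from above: for $s\le\delta_*$ one has $\frac{q_0}{\alpha+1}s^{\alpha+1}-\frac\lambda2 s^2\ge c\,s^{\alpha+1}$, and $c>0$ can be chosen uniformly after possibly shrinking $\delta_*$ by a fixed factor. Hence $\psi_{1/N,q_0}(\tau)\le C\tau^{(1-\alpha)/2}$.

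Finally, Lemma~\ref{Lemm3.3} gives continuity of $u_\lambda^-$ on $\overline\Omega$, so the hypotheses of Theorem~\ref{thm:Comp} hold at each $x_0\in\mathcal C_\rho$. That theorem gives $u_\lambda^-(x_0)=0$, i.e.\ $u_\lambda^-=0$ in $\mathcal C_\rho$, and the support inclusion follows.

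The main obstacle is the uniformity in $\lambda$ of the threshold $\delta_{\lambda,q_0}$ and of the profile $\psi_{1/N,q_0}$, since both depend on $\lambda$. This is handled by the lower bound $\delta_*$ and the explicit estimate $\psi_{1/N,q_0}(\tau)\lesssim\tau^{(1-\alpha)/2}$, both valid because $\lambda$ stays in a bounded interval below $\lambda_{1,\Omega^0}$.

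A secondary point is that the corollary does not explicitly list the regularity and zero-set assumptions on $\Omega^0$. I would invoke them as the standing assumptions stated in the Introduction, which are exactly what Theorem~\ref{thm:Main2} needs. With those in place, the corollary is essentially a restatement of Theorem~\ref{thm3}.
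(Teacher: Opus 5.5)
Your proposal is correct and follows the paper's route: it reduces the corollary to the proof of Theorem~\ref{thm3}, namely $L^\infty$-smallness of $u_\lambda^-$ from Theorem~\ref{thm:Main2}(iii) and Proposition~\ref{prop:linftyEstimate}, followed by Theorem~\ref{thm:Comp} on the collar $G=\mathcal C_{\rho_0/2}$ with $\rho=\rho_0/4$. Your explicit check that $\delta_{\lambda,q_0}\ge\delta_*$ and $\psi_{1/N,q_0}(\tau)\le C\tau^{(1-\alpha)/2}$ hold uniformly for $\lambda<\lambda_{1,\Omega^0}$ fills in a point the paper leaves implicit; in fact no shrinking of $\delta_*$ is needed, since $\frac{q_0}{\alpha+1}-\frac{\lambda}{2}s^{1-\alpha}\ge\frac{(1-\alpha)(2+\alpha)}{2(1+\alpha)}\,q_0$ whenever $s\le\delta_{\lambda,q_0}$.
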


\proof
This is an immediate consequence of Theorem~\ref{thm3}.$\fin$
\par
\medskip
The following result concerns sign-changing weights. In this case,
compact support requires, in addition, a negative boundary collar and a
suitable smallness condition on the solution.

\begin{coro}
\label{cor:sign-changing-compact-support}
Assume that $0<\alpha<1$, $m\in L^\infty(\Omega)$, $|\Omega^0|>0$, and
\[
\int_\Omega m(x)\varphi_{1,\Omega}^{\alpha+1}\,dx<0.
\]
Suppose that there exist $\rho_0>0$ and $m_{\rho_0}^->0$ such that
\[
\mathcal C_{\rho_0}
:=
\{x\in\Omega:\operatorname{dist}(x,\partial\Omega)<\rho_0\}
\subset\{m<0\},
\qquad
-m(x)\geq m_{\rho_0}^-
\quad\text{a.e. in }\mathcal C_{\rho_0}.
\]
Assume, in addition, that for some
$\lambda_c\in(\lambda_{1,\Omega},\lambda_{1,\Omega^0})$,
equation \eqref{Eq1} has a non-negative weak solution $u_{\lambda_c}^-$
satisfying $E_{\lambda_c}''(u_{\lambda_c}^-)<0$ and, for some
$q_0\in(0,m_\rho^-)$,
\[
\psi_{1/N,q_0}
\bigl(\|u_{\lambda_c}^-\|_{L^\infty(\Omega)}\bigr)
<
\frac{\rho_0}{4},
\qquad
\|u_{\lambda_c}^-\|_{L^\infty(\Omega)}
<
\delta_{\lambda_c,q_0}.
\]
Then $u_{\lambda_c}^-$ has compact support in $\Omega$.
\end{coro}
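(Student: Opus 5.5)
The plan is to apply the local dead-core criterion, Theorem~\ref{thm:Comp}, directly to the single solution $u_{\lambda_c}^-$, with the geometric setup from the proof of Theorem~\ref{thm3}. Only the two smallness hypotheses are used. The condition $E_{\lambda_c}''(u_{\lambda_c}^-)<0$ and the sign-changing nature of $m$ play no role beyond identifying the branch. In particular, no limit in $\lambda$ is needed, since smallness is assumed rather than derived from Theorem~\ref{thm:Main2}(iii).

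First, fix $q_0\in(0,m_{\rho_0}^-)$ as in the hypothesis (reading $m_\rho^-$ as $m_{\rho_0}^-$), so that $-m\geq q_0$ a.e. in $\mathcal C_{\rho_0}$. Set $G:=\mathcal C_{\rho_0/2}$ and $\rho:=\rho_0/4$. As in \eqref{eq:collar-distance}, the triangle inequality for the distance to $\partial\Omega$ gives
\[
d_G(x_0)=\operatorname{dist}(x_0,\partial G\setminus\partial\Omega)\geq \frac{\rho_0}{4}
\qquad\text{for every } x_0\in\mathcal C_\rho,
\]
because points of $\partial G\setminus\partial\Omega$ lie at distance $\rho_0/2$ from $\partial\Omega$.

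Next, verify the hypotheses of Theorem~\ref{thm:Comp}. By Lemma~\ref{Lemm3.3}, $u_{\lambda_c}^-\in C^{1,\gamma}(\overline\Omega)\subset C(\overline\Omega)$. Condition \eqref{eq:G-weight} holds on $G\subset\mathcal C_{\rho_0}$. Since $\lambda_c>\lambda_{1,\Omega}>0$, the extra requirement $\|u_{\lambda_c}^-\|_\infty<\delta_{\lambda_c,q_0}$ is exactly one of the assumptions, and it also implies \eqref{eq:G-smallness}. Finally, the assumed bound $\psi_{1/N,q_0}(\|u_{\lambda_c}^-\|_\infty)<\rho_0/4\leq d_G(x_0)$ gives \eqref{eq:Hsfb}. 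Theorem~\ref{thm:Comp} therefore yields $u_{\lambda_c}^-(x_0)=0$ for all $x_0\in\mathcal C_\rho$. Hence $\operatorname{supp}u_{\lambda_c}^-\subset\{x\in\Omega:\operatorname{dist}(x,\partial\Omega)\geq\rho\}$, which is compact in $\Omega$.

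There is no real obstacle. The only points needing care are the distance estimate for $\partial G\setminus\partial\Omega$ and checking that the barrier ball $B_R(x_0)\cap\Omega$ with $R<\rho_0/4$ stays inside $G$. Both are handled exactly as in the proof of Theorem~\ref{thm3}.
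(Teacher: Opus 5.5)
Your proposal is correct and follows the paper's proof essentially verbatim. It uses the same choice $G=\mathcal C_{\rho_0/2}$ and $\rho=\rho_0/4$, the same bound $\operatorname{dist}(x_0,\partial G\setminus\partial\Omega)\ge\rho_0/4$, and a direct application of Theorem~\ref{thm:Comp} with the two assumed smallness conditions. Your additional checks (continuity via Lemma~\ref{Lemm3.3}, $\lambda_c>0$, and $B_R(x_0)\cap\Omega\subset G$) only make explicit what the paper summarizes as ``the remaining assumptions follow from the hypotheses.''
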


\proof
Set
\[
G:=\mathcal C_{\rho_0/2},
\qquad
\rho:=\frac{\rho_0}{4}.
\]
Since $q_0<m_{\rho_0}^-$, we have $-m(x)\geq q_0$ a.e. in $G$, and for every
$x_0\in\mathcal C_\rho$,
\[
\operatorname{dist}
(x_0,\partial G\setminus\partial\Omega)
\geq\frac{\rho_0}{4}.
\]
The remaining assumptions of Theorem~\ref{thm:Comp} follow from the
hypotheses. Hence $u_{\lambda_c}^-=0$ in $\mathcal C_\rho$, and therefore
\[
\operatorname{supp}u_{\lambda_c}^-
\subset
\{x\in\Omega:\operatorname{dist}(x,\partial\Omega)\geq\rho\}
\Subset\Omega.
\]
\fineq

\subsection{A one-dimensional threshold for the formation of compact support}

The preceding results suggest a qualitative connection between the
bifurcation diagram for the strictly negative weight $m\equiv-1$ and the
diagrams corresponding to non-positive weights with a nontrivial zero
region. This connection is especially transparent in the one-dimensional
setting. Let $\Omega=(-1,1)$ and for $n\ge2$, consider
\[
m_n(x)=
\begin{cases}
0,& |x|<1/n,\\[1mm]
-1,& 1/n\le |x|<1.
\end{cases}
\]
We denote
\[
\Omega_n^0:=\left(-\frac1n,\frac1n\right).
\]
For later use, we also set
\[
M_n(u):=\int_{-1}^1m_n(x)|u|^{\alpha+1}\,dx.
\]
Then
\[
\lambda_n^*
=
\lambda_{1,\Omega_n^0}
=
\left(\frac{n\pi}{2}\right)^2.
\]

\begin{theo}[Explicit threshold, compact-support branch and onset amplitude]\label{thm:explicit-threshold-1d}
Let $0<\alpha<1$ and $n\ge2$. Consider the problem
\[
\begin{cases}
	-u''=\lambda u+m_n(x)u^\alpha,& -1<x<1,\\
	u(-1)=u(1)=0.
\end{cases}
\]
Define
\[
\lambda_{c,n}:=
\frac{\pi^2}
{\left((1-\alpha)+\dfrac{1+\alpha}{n}\right)^2}.
\]
Then
\[
\frac{\pi^2}{4}
<
\lambda_{c,n}
<
\lambda_n^*.
\]
At $\lambda=\lambda_{c,n}$ there exists an even positive flat solution
$u_{c,n}$ satisfying $u_{c,n}(\pm1)=u_{c,n}'(\pm1)=0$. For every
$\lambda\in(\lambda_{c,n},\lambda_n^*)$ there exists a unique even
non-negative solution in the above class whose connected support is
\[
\operatorname{supp}u_{\lambda,n}
=
[-b_{\lambda,n},b_{\lambda,n}]
\Subset(-1,1),
\]
where
\[
b_{\lambda,n}
=
\frac{1}{1-\alpha}
\left(
\frac{\pi}{\sqrt{\lambda}}
-\frac{1+\alpha}{n}
\right).
\]
For $x\ge0$ the solution is
\[
u_{\lambda,n}(x)
=
\begin{cases}
	A_{\lambda,n}\cos(\sqrt{\lambda}\,x),
	& 0\le x\le\dfrac1n,\\[2mm]
	\displaystyle
	\left(
	\frac{2}{(1+\alpha)\lambda}
	\right)^{\frac1{1-\alpha}}
	\sin^{\frac{2}{1-\alpha}}
	\left[
	\frac{1-\alpha}{2}\sqrt{\lambda}\,
	(b_{\lambda,n}-x)
	\right],
	& \dfrac1n\le x\le b_{\lambda,n},\\[3mm]
	0,
	& b_{\lambda,n}\le x\le1,
\end{cases}
\]
and it is extended evenly to $(-1,0)$. Moreover,
\[
A_{\lambda,n}
=
\|u_{\lambda,n}\|_{L^\infty(-1,1)}
=
\left(
\frac{2}{(1+\alpha)\lambda}
\right)^{\frac1{1-\alpha}}
\cos^{\frac{1+\alpha}{1-\alpha}}
\left(\frac{\sqrt{\lambda}}{n}\right).
\]
For each fixed $n$, the mapping $\lambda\longmapsto A_{\lambda,n}$
is strictly decreasing on $(\lambda_{c,n},\lambda_n^*)$, and
$A_{\lambda,n}\longrightarrow0\hbox{ as }\lambda\uparrow\lambda_n^*$. If
\[
A_n:=\|u_{c,n}\|_{L^\infty(-1,1)},
\]
then
\[
A_n=
\left[
\frac{2}{(1+\alpha)\pi^2}
\left(
(1-\alpha)+\frac{1+\alpha}{n}
\right)^2
\right]^{\frac1{1-\alpha}}
\cos^{\frac{1+\alpha}{1-\alpha}}
\left(
\frac{\pi}{n(1-\alpha)+(1+\alpha)}
\right),
\]
and
\[
\lambda_{c,n}\uparrow
\lambda_{c,\infty}:=
\frac{\pi^2}{(1-\alpha)^2},
\]
while
\[
A_n\longrightarrow
A_\infty:=
\left[
\frac{2(1-\alpha)^2}
{(1+\alpha)\pi^2}
\right]^{\frac1{1-\alpha}}
>0.
\]
Furthermore,
\[
\frac{\lambda_{c,n}}{\lambda_n^*}\longrightarrow0.
\]
Moreover, $H_\lambda(u_{\lambda,n})=M_n(u_{\lambda,n})<0$, and therefore
$E_\lambda(u_{\lambda,n})>0,\ E_\lambda''(u_{\lambda,n})<0$. Thus, these
explicit solutions have the variational character of the
branch $u_{\lambda,n}^-$.
\end{theo}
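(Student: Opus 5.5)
The proof will be a direct ODE construction by matching. I look for even solutions that are $C^1$ across $x=\pm1/n$ and consist of three pieces.

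On the zero region $|x|<1/n$ the equation is $-u''=\lambda u$. An even positive solution there must be $u=A\cos(\sqrt\lambda x)$, and positivity on $[0,1/n]$ requires $\sqrt\lambda/n<\pi/2$, that is $\lambda<\lambda_n^*$.

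On $1/n\le x\le1$ the equation is $-u''=\lambda u-u^\alpha$. For a solution that vanishes together with its derivative at a free point $b$, the first integral is
\[
\tfrac12 u'^2=\tfrac{1}{\alpha+1}u^{\alpha+1}-\tfrac\lambda2u^2 .
\]
Substituting $u=C\sin^{2/(1-\alpha)}(\theta)$ with $\theta=\tfrac{1-\alpha}{2}\sqrt\lambda(b-x)$, one checks directly that $C=(2/((1+\alpha)\lambda))^{1/(1-\alpha)}$ gives an exact solution. This holds for $\theta\in[0,\pi/2]$; beyond that the profile would decrease toward the origin, which is not the branch we want. The dead-core extension $u\equiv0$ on $[b,1]$ is then $C^1$, so gluing gives a weak (indeed $C^1$) solution. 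This is the standard compact-support profile from \cite{DH-CRAS,DH Portugalia}, and I will only verify it by differentiation.

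The matching conditions at $x=1/n$ are continuity of $u$ and of $u'$. Their quotient gives $u'/u$ on both sides:
\[
-\sqrt\lambda\tan(\sqrt\lambda/n)=-\sqrt\lambda\cot\theta_0,\qquad \theta_0=\tfrac{1-\alpha}{2}\sqrt\lambda\,(b-1/n).
\]
Hence $\theta_0=\pi/2-\sqrt\lambda/n$, which lies in $(0,\pi/2)$. Solving for $b$ yields exactly the stated $b_{\lambda,n}$. Continuity of $u$ then gives
\[
A_{\lambda,n}=C\sin^{2/(1-\alpha)}(\theta_0)/\cos(\sqrt\lambda/n)=C\cos^{(1+\alpha)/(1-\alpha)}(\sqrt\lambda/n).
\]
Since $\cos$ is maximal at $0$ and the outer profile is increasing toward $1/n$, $A$ is the sup norm.

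The condition $b_{\lambda,n}<1$ is equivalent to $\pi/\sqrt\lambda<(1-\alpha)+(1+\alpha)/n$, that is $\lambda>\lambda_{c,n}$. Equality $b=1$ gives the flat solution $u_{c,n}$, and $A_n$ is obtained by substituting $\lambda_{c,n}$. The inequalities $\pi^2/4<\lambda_{c,n}<\lambda_n^*$ reduce to $(1-\alpha)+(1+\alpha)/n<2$ and to $(1-\alpha)+(1+\alpha)/n>2/n$, which is $(1-\alpha)(1-1/n)>0$. The same condition $(1-\alpha)(1-1/n)>0$ gives $b>1/n$.

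Monotonicity of $A_{\lambda,n}$ is immediate, since both $\lambda^{-1/(1-\alpha)}$ and $\cos(\sqrt\lambda/n)$ decrease. The limits in $n$ are elementary; for instance $\lambda_{c,n}/\lambda_n^*=4/(n(1-\alpha)+(1+\alpha))^2\to0$.

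Uniqueness within the class of even non-negative solutions with connected support is the main point requiring care. On the outer region where $u>0$ near the support boundary, the first integral constant is forced to be zero because $u=u'=0$ at $b$. Uniqueness of solutions of $u'=-\sqrt{(2/(1+\alpha))u^{1+\alpha}-\lambda u^2}$ with the nonzero branch then forces the $\sin^{2/(1-\alpha)}$ profile; I would argue this by separation of variables on the set $\{u>0\}$, using that the square root is positive there. Evenness plus positivity forces the cosine piece, and the matching equation has the single solution $\theta_0$ in $(0,\pi/2)$.

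Finally, for the variational character, test the equation with $u$ to get $H_\lambda(u)=M_n(u)$. Because $u>0$ on a set where $m_n=-1$ of positive measure, $M_n(u)<0$. Then $E''_\lambda=H_\lambda-\alpha M_n=(1-\alpha)M_n<0$, and $E_\lambda=-c_\alpha M_n>0$ by \eqref{eq:EHM}.
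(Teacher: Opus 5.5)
Your proposal is correct and is essentially the paper's own argument: the zero-energy first integral on $\{m_n=-1\}$ giving the $\sin^{2/(1-\alpha)}$ profile, the cosine core on $\Omega_n^0$, and matching of logarithmic derivatives at $x=1/n$ to obtain $\theta_0=\pi/2-\sqrt{\lambda}/n$ and hence $b_{\lambda,n}$, followed by the same elementary computations for $\lambda_{c,n}$, $A_{\lambda,n}$, the limits and the Nehari identities (you also verify $\pi^2/4<\lambda_{c,n}<\lambda_n^*$, which the paper only states). The one point to tighten is the class in your uniqueness sketch: you need $u>0$ on $(-b,b)$, as the paper assumes when it takes the positivity set to be $[0,b)$, or else the support fixed to be exactly $[-b_{\lambda,n},b_{\lambda,n}]$; connected support alone is not enough, because for $\lambda\in(9\lambda_{c,n},\lambda_n^*)$ (nonempty when $n(1-\alpha)+1+\alpha>6$) one can glue at $\pm b_{\lambda,n}$ a further full bump of the zero-energy orbit, of length $2\pi/((1-\alpha)\sqrt{\lambda})$, and still obtain an even non-negative solution with connected support in $(-1,1)$.
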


\proof
Set $a_n:=\frac1n$. We work on $[0,1]$ and use even reflection.
Assume that the positivity set is $[0,b)$, with $a_n<b\le1,$ and that the
solution is flat at the free boundary: $u(b)=u'(b)=0.$ On $(a_n,b)$ one
has $m_n=-1$, and therefore
\[
-u''+u^\alpha=\lambda u.
\]
Multiplying by $u'$ and integrating from $x$ to $b$ gives
\[
\frac12(u')^2
=
\frac{1}{1+\alpha}u^{1+\alpha}
-\frac{\lambda}{2}u^2.
\]
Hence
\[
(u')^2
=
\frac{2}{1+\alpha}u^{1+\alpha}
-\lambda u^2.
\]
Since $u'<0$ in $(a_n,b)$,
\[
-u'
=
u^{(1+\alpha)/2}
\left(
\frac{2}{1+\alpha}
-\lambda u^{1-\alpha}
\right)^{1/2}.
\]
Therefore,
\[
b-x
=
\int_0^{u(x)}
\frac{ds}{
	s^{(1+\alpha)/2}
	\sqrt{\dfrac{2}{1+\alpha}-\lambda s^{1-\alpha}}
}.
\]
With
\[
y=s^{(1-\alpha)/2},
\qquad
s^{-(1+\alpha)/2}\,ds
=
\frac{2}{1-\alpha}\,dy,
\]
we obtain
\[
b-x
=
\frac{2}{(1-\alpha)\sqrt{\lambda}}
\arcsin
\left[
\sqrt{\frac{(1+\alpha)\lambda}{2}}\,
u(x)^{(1-\alpha)/2}
\right].
\]
Equivalently,
\[
u(x)
=
\left(
\frac{2}{(1+\alpha)\lambda}
\right)^{\frac{1}{1-\alpha}}
\sin^{\frac{2}{1-\alpha}}
\left[
\frac{1-\alpha}{2}\sqrt{\lambda}\,(b-x)
\right].
\]
Thus
\[
u(x)=
C_\lambda
\sin^p\!\left[\kappa_\lambda(b-x)\right],
\]
where
\[
C_\lambda=
\left(
\frac{2}{(1+\alpha)\lambda}
\right)^{1/(1-\alpha)},
\qquad
p=\frac{2}{1-\alpha},
\qquad
\kappa_\lambda=
\frac{1-\alpha}{2}\sqrt{\lambda}.
\]
On $(0,a_n)$, where $m_n=0$, $-u''=\lambda u$. The symmetry condition
$u'(0)=0$ yields
\[
u(x)=A\cos(\sqrt{\lambda}\,x).
\]
We now explain carefully the matching at $x=a_n$. Since the
coefficient $m_n$ is discontinuous there, the two formulas above are
obtained independently on the two subintervals. In order that their
union define a global weak solution, no Dirac mass may appear in
$-u''$ at $x=a_n$. Thus both $u$ and $u'$ must be continuous at the
interface
\[
u_{\rm in}(a_n)=u_{\rm out}(a_n),
\qquad
u_{\rm in}'(a_n)=u_{\rm out}'(a_n).
\]
Instead of solving these two equations simultaneously, it is more
convenient to divide the second by the first. This eliminates the
unknown amplitudes and gives equality of the logarithmic derivatives
\[
\frac{u_{\rm in}'(a_n)}{u_{\rm in}(a_n)}
=
\frac{u_{\rm out}'(a_n)}{u_{\rm out}(a_n)}.
\]
For the inner profile,
\[
\frac{u_{\rm in}'(a_n)}{u_{\rm in}(a_n)}
=
-\sqrt{\lambda}\tan(\sqrt{\lambda}\,a_n).
\]
For the outer profile,
\[
\frac{u_{\rm out}'(a_n)}{u_{\rm out}(a_n)}
=
-p\kappa_\lambda
\cot\left[\kappa_\lambda(b-a_n)\right].
\]
Since $p\kappa_\lambda=\sqrt{\lambda}$, the matching condition becomes
\[
\cot\left[\kappa_\lambda(b-a_n)\right]
=
\tan(\sqrt{\lambda}\,a_n).
\]
Along the first positive branch the two arguments lie in the relevant
principal interval, and hence
\[
\kappa_\lambda(b-a_n)
+\sqrt{\lambda}\,a_n
=
\frac{\pi}{2}.
\]
Therefore
\[
b=
\frac{1}{1-\alpha}
\left(
\frac{\pi}{\sqrt{\lambda}}
-\frac{1+\alpha}{n}
\right).
\]
The boundary dead core is born when $b=1$, and this yields
\[
\lambda_{c,n}
=
\frac{\pi^2}
{\left((1-\alpha)+\dfrac{1+\alpha}{n}\right)^2}.
\]
Moreover,
\[
b<1\quad\Longleftrightarrow\quad\lambda>\lambda_{c,n},
\]
while
\[
b>\frac1n\quad\Longleftrightarrow\quad\lambda<\lambda_n^*.
\]
Thus, compact support occurs precisely for
$\lambda\in(\lambda_{c,n},\lambda_n^*)$.	
To compute the amplitude, use again the phase relation
\[
\kappa_\lambda(b-a_n)
=
\frac{\pi}{2}-\frac{\sqrt{\lambda}}{n}.
\]
Hence
\[
\sin\left[\kappa_\lambda(b-a_n)\right]
=
\cos\left(\frac{\sqrt{\lambda}}{n}\right).
\]
The continuity of $u$ at $x=a_n$ gives
\[
A_{\lambda,n}
\cos\left(\frac{\sqrt{\lambda}}{n}\right)
=
C_\lambda
\cos^{2/(1-\alpha)}
\left(\frac{\sqrt{\lambda}}{n}\right).
\]
Since $\sqrt{\lambda}/n<\pi/2$,
\[
A_{\lambda,n}
=
C_\lambda
\cos^{(1+\alpha)/(1-\alpha)}
\left(\frac{\sqrt{\lambda}}{n}\right).
\]
For fixed $n$,
\[
\log A_{\lambda,n}
=
-\frac{1}{1-\alpha}\log\lambda
+
\frac{1+\alpha}{1-\alpha}
\log\cos\left(\frac{\sqrt{\lambda}}{n}\right)
+\hbox{constant},
\]
so
\[
\frac{d}{d\lambda}\log A_{\lambda,n}
=
-\frac{1}{(1-\alpha)\lambda}
-
\frac{1+\alpha}{1-\alpha}
\frac{\tan(\sqrt{\lambda}/n)}
{2n\sqrt{\lambda}}
<0.
\]
Thus $\lambda\mapsto A_{\lambda,n}$ is strictly decreasing. Since
\[
\frac{\sqrt{\lambda}}{n}
\uparrow\frac{\pi}{2}
\quad\hbox{as }\lambda\uparrow\lambda_n^*,
\]
one has $A_{\lambda,n}\to0$. Taking $\lambda=\lambda_{c,n}$ gives the
formula for $A_n$. Since
\[
\lambda_{c,n}\uparrow
\frac{\pi^2}{(1-\alpha)^2}
\]
and
\[
\frac{\sqrt{\lambda_{c,n}}}{n}
=
\frac{\pi}{n(1-\alpha)+(1+\alpha)}
\longrightarrow0,
\]
we obtain
\[
A_n\to
A_\infty=
\left[
\frac{2(1-\alpha)^2}
{(1+\alpha)\pi^2}
\right]^{1/(1-\alpha)}.
\]
Furthermore,
	\[
	\frac{\lambda_{c,n}}{\lambda_n^*}
	=
	\frac{4}
	{\left(n(1-\alpha)+(1+\alpha)\right)^2}
	\longrightarrow0
	\qquad\text{as }n\to\infty.
	\]
Finally, testing the equation by $u_{\lambda,n}$ gives
\[
H_\lambda(u_{\lambda,n})
=
M_n(u_{\lambda,n}).
\]
Since $m_n=-1$ on a set where $u_{\lambda,n}>0$,
$M_n(u_{\lambda,n})<0$. The usual Nehari identities therefore yield
\[
E_\lambda(u_{\lambda,n})>0,
\qquad
E_\lambda''(u_{\lambda,n})<0.
\]
\fineq

\begin{rem}
The limiting values
\[
\lambda_{c,\infty}
=
\frac{\pi^2}{(1-\alpha)^2},
\qquad
A_\infty
=
\left[
\frac{2(1-\alpha)^2}
{(1+\alpha)\pi^2}
\right]^{1/(1-\alpha)}
\]
coincide with the one-dimensional flat-solution constants obtained in
\cite{Diaz ambiguity}, after the identifications
$R=1,\qquad V_0=1,\qquad m=\alpha$. Thus, at the level of the critical parameter and the onset amplitude,
	the limit $n\to\infty$ recovers exactly the flat-solution constants of the
	strictly negative problem $m\equiv-1$.
\end{rem}

It is important to get some additional information about the amplitudes
$A_n$. We have

\begin{coro}[Comparison of the onset amplitudes for $n\geq3$]
\label{cor:An-greater-Ainfty}
Let $0<\alpha<1$, and let $A_n$ denote the $L^\infty$-amplitude of
the flat positive solution at the threshold $\lambda=\lambda_{c,n}$,
as in the preceding theorem. Then, $A_n<A_\infty$ if and only if
\begin{equation}
	\left(
	1+\frac{1+\alpha}{n(1-\alpha)}
	\right)^2
	\cos^{1+\alpha}
	\left(
	\frac{\pi}{n(1-\alpha)+(1+\alpha)}
	\right)
	<1.
	\label{eq:An-less-Ainfty}
\end{equation}
In particular, $A_n>A_\infty\hbox{ for every integer }n\geq3$.
\end{coro}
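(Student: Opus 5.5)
The plan has two parts: reduce the comparison $A_n$ versus $A_\infty$ to the bracket in \eqref{eq:An-less-Ainfty}, then show that bracket exceeds $1$ by a monotonicity argument in one auxiliary variable.

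\emph{Reduction to \eqref{eq:An-less-Ainfty}.} I would divide the explicit formula for $A_n$ in Theorem~\ref{thm:explicit-threshold-1d} by that for $A_\infty$. The factors $\bigl[2/((1+\alpha)\pi^2)\bigr]^{1/(1-\alpha)}$ cancel. Factoring $(1-\alpha)^2$ out of $\bigl((1-\alpha)+\frac{1+\alpha}{n}\bigr)^2$ gives
\[
\frac{A_n}{A_\infty}
=
\left[
\Bigl(1+\frac{1+\alpha}{n(1-\alpha)}\Bigr)^2
\cos^{1+\alpha}\Bigl(\frac{\pi}{n(1-\alpha)+(1+\alpha)}\Bigr)
\right]^{\frac1{1-\alpha}} .
\]
The cosine is positive, since its argument is $<\pi/2$ for $n\ge2$. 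Moreover $r\mapsto r^{1/(1-\alpha)}$ is strictly increasing on $(0,\infty)$. Hence $A_n<A_\infty$ if and only if the bracket is $<1$, which is \eqref{eq:An-less-Ainfty}. The same computation shows that $A_n>A_\infty$ if and only if the bracket is $>1$.

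\emph{Positivity of the logarithm.} I set $s:=n(1-\alpha)>0$, $c:=1+\alpha\in(1,2)$ and $x:=\pi/(s+c)$. The logarithm of the bracket is then
\[
L(s)=2\log\frac{s+c}{s}+c\log\cos\frac{\pi}{s+c}.
\]
As $s\to\infty$, both terms tend to $0$, so $L(s)\to0$. It therefore suffices to show that $L$ is strictly decreasing on the range $s>1-\alpha$, where $x<\pi/2$; this range contains every $n\ge2$. A direct differentiation gives
\[
L'(s)=-\frac{2c}{s(s+c)}+\frac{c\,\pi\tan x}{(s+c)^2}.
\]
Using $s+c=\pi/x$ and $s=(\pi-cx)/x$, the condition $L'(s)<0$ becomes
\[
x(\pi-cx)\tan x<2\pi .
\]
Since $c\le2$, it is enough to prove $x(\pi-2x)\tan x<2\pi$. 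Writing $y=\pi/2-x$, one has $x(\pi-2x)\tan x=2x\,y\cot y\le 2x<\pi<2\pi$, because $y\cot y\le1$ on $(0,\pi/2)$. Hence $L$ is strictly decreasing with limit $0$ at infinity, so $L(s)>0$. This gives $A_n>A_\infty$ for every admissible $n$, and in particular for every $n\ge3$.

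\emph{Main obstacle.} The delicate step is establishing the sign of $L$ uniformly in both $\alpha$ and $n$. Cruder estimates fail here. Bounding $\cos^{1+\alpha}\ge\cos^2$ together with $1-\cos x\le x^2/2$ loses too much, both near $\alpha=0$ and near $\alpha=1$, where $x\to\pi/2$. The monotonicity-in-$s$ argument avoids this, because the derivative condition reduces to the elementary bound $y\cot y\le 1$. I would double-check the algebra leading from $L'(s)<0$ to $x(\pi-cx)\tan x<2\pi$, since that is where errors would most likely creep in.
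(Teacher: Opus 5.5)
Your reduction of $A_n<A_\infty$ to the bracket in \eqref{eq:An-less-Ainfty} is correct and matches the paper's. The positivity argument for $L$, however, has a genuine gap.

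Your formula for $L'(s)$ is right. But $L'(s)<0$ is equivalent to $\pi s\tan x<2(s+c)$. Substituting $s+c=\pi/x$ and $s=(\pi-cx)/x$, then multiplying by $x/\pi$, gives $(\pi-cx)\tan x<2$, not $x(\pi-cx)\tan x<2\pi$. This is exactly the paper's condition $G(\theta)<2$. Your next step, ``since $c\le2$ it is enough to take $c=2$'', also goes the wrong way, because $\pi-cx\geq\pi-2x$.

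More importantly, the correct condition fails. Since $c=1+\alpha<2$, one has $(\pi-cx)\tan x\to+\infty$ as $x\uparrow\pi/2$. Already at $n=3$, set $y:=\pi/2-x=\frac{\pi(1-\alpha)}{2(2-\alpha)}\in(0,\pi/4)$. Then $\pi-cx=3y$ and $(\pi-cx)\tan x=3y\cot y>3\pi/4>2$. So $L$ is \emph{increasing} at $n=3$ for every $\alpha$; for $\alpha=1/2$ the bracket is about $1.41$ at $n=3$ and about $1.51$ at $n=4$.

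A sanity check also exposes the error. Your argument would give $A_2>A_\infty$ for all $\alpha$. In fact the bracket at $n=2$ tends to $(\pi/4)^2<1$ as $\alpha\uparrow1$, which is why the paper treats $n=2$ as exceptional.

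What survives is the structure the paper uses. The function $G(x):=(\pi-cx)\tan x$ is strictly increasing on $(0,\pi/2)$, since $\cos^2x\,G'(x)=\pi-c(x+\sin x\cos x)>\pi(1-c/2)>0$. Because $x$ decreases as $s$ increases, $L'$ changes sign at most once, from positive to negative. So $L$ is unimodal on $[3(1-\alpha),\infty)$ with limit $0$.

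Positivity for all $n\ge3$ therefore reduces to the single endpoint inequality $L(3(1-\alpha))>0$, i.e.\ $R(3)>1$. This endpoint estimate, uniform in $\alpha\in(0,1)$, is the missing content of your proof. The paper writes $R(3)=\frac{\pi^2}{9y^2}\sin^{2-\varepsilon}y$ with $\varepsilon=1-\alpha$. It then proves $\sin^{2-\varepsilon}y>y^2$ by exploiting the relation $\frac{\varepsilon}{2-\varepsilon}=\frac{y}{\pi-3y}$.

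That coupling between $\varepsilon$ and $y$ is essential. For instance, the bound $\sin^{2-\varepsilon}y\geq\sin^2y$ alone yields only $R(3)\geq\frac{\pi^2}{9}\bigl(\frac{\sin y}{y}\bigr)^2$, which drops to $8/9$ as $\alpha\downarrow0$.
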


\proof
To compare $A_n$ with $A_\infty$, divide the two explicit formulas and
raise to the power $1-\alpha$. This gives
\[
\left(\frac{A_n}{A_\infty}\right)^{1-\alpha}
=
\left(
1+\frac{1+\alpha}{n(1-\alpha)}
\right)^2
\cos^{1+\alpha}
\left(
\frac{\pi}{n(1-\alpha)+(1+\alpha)}
\right).
\]
Therefore $A_n<A_\infty$ if and only if \eqref{eq:An-less-Ainfty} holds.
This also shows that the convergence of $A_n$ towards $A_\infty$ is not
forced to be monotone: the algebraic prefactor and the cosine factor have
opposite effects as $n$ varies.	
From the above explicit formula
\[
R_n:=
\left(\frac{A_n}{A_\infty}\right)^{1-\alpha}
=
\left(
1+\frac{1+\alpha}{n(1-\alpha)}
\right)^2
\cos^{1+\alpha}
\left(
\frac{\pi}{n(1-\alpha)+(1+\alpha)}
\right).
\]
Clearly,
\[
R_n\longrightarrow1\qquad\hbox{as }n\to\infty.
\]
We shall prove that $R_n>1$ for every $n\geq3$.
Put
\[
\varepsilon:=1-\alpha\in(0,1),
\qquad
1+\alpha=2-\varepsilon,
\]
and regard $n$ temporarily as a real variable $s\geq3$. Define
\[
D(s):=s\varepsilon+2-\varepsilon,
\qquad
\theta(s):=\frac{\pi}{D(s)}.
\]
Then
\[
R(s)=
\left(\frac{D(s)}{s\varepsilon}\right)^2
\cos^{2-\varepsilon}\theta(s).
\]
A direct logarithmic differentiation gives
\[
\frac{R'(s)}{R(s)}
=
\frac{2-\varepsilon}{D(s)}
\left[
-\frac{2}{s}
+\varepsilon\theta(s)\tan\theta(s)
\right].
\]
Since
\[
s\varepsilon=\frac{\pi}{\theta}-(2-\varepsilon),
\]
the sign of $R'(s)$ is the sign of
\[
G(\theta(s))-2,
\qquad
G(\theta):=
[\pi-(2-\varepsilon)\theta]\tan\theta.
\]
We claim that $G$ is strictly increasing on $(0,\pi/2)$. Indeed,
\[
G'(\theta)
=
-(2-\varepsilon)\tan\theta
+
[\pi-(2-\varepsilon)\theta]\sec^2\theta,
\]
and therefore
\[
\cos^2\theta\,G'(\theta)
=
\pi-(2-\varepsilon)
\bigl(\theta+\sin\theta\cos\theta\bigr).
\]
The function
\[
h(\theta):=\theta+\sin\theta\cos\theta
\]
is strictly increasing on $(0,\pi/2)$, because
\[
h'(\theta)=2\cos^2\theta>0,
\]
and
\[
h(\theta)<h(\pi/2)=\frac{\pi}{2}.
\]
Consequently,
\[
\cos^2\theta\,G'(\theta)
>
\pi-(2-\varepsilon)\frac{\pi}{2}
=
\frac{\pi\varepsilon}{2}>0,
\]
which proves the claim.	
Now $\theta(s)$ is strictly decreasing in $s$. Since $G$ is strictly
increasing, the function $G(\theta(s))-2$ is strictly decreasing.
Hence $R'(s)$ can change sign at most once, and, if it does, it changes
from positive to negative. Thus $R$ is either decreasing on
$[3,\infty)$, or first increasing and then decreasing. In particular,
$R$ has no local minimum on $[3,\infty)$.	
Assume that for the case $n=3$ we have
\[
R(3)=
\left(\frac{A_3}{A_\infty}\right)^{1-\alpha}>1.
\]
Together with $\lim_{s\to\infty}R(s)=1$, the preceding implies
$R(s)>1\hbox{ for every finite }s\geq3$. Indeed, if $R$ is decreasing,
it decreases from $R(3)>1$ to its limit $1$; if it first increases and
then decreases, its decreasing part again approaches $1$ from above.
Therefore, for every integer $n\geq3$,
\[
\left(\frac{A_n}{A_\infty}\right)^{1-\alpha}>1.
\]	
Thus, we only need to prove that $R(3)>1$. We recall that
\begin{equation}
	\left(\frac{A_3}{A_\infty}\right)^{1-\alpha}
	=
	\left[
	\frac{2(2-\alpha)}{3(1-\alpha)}
	\right]^2
	\cos^{1+\alpha}
	\left(
	\frac{\pi}{2(2-\alpha)}
	\right).
	\label{eq:ratio-A3}
\end{equation}
Put $\varepsilon:=1-\alpha\in(0,1)$. Then
\eqref{eq:ratio-A3} can be rewritten as
\begin{equation}
	\left(\frac{A_3}{A_\infty}\right)^{1-\alpha}
	=
	\left[
	\frac{2(1+\varepsilon)}{3\varepsilon}
	\right]^2
	\sin^{2-\varepsilon}
	\left(
	\frac{\pi\varepsilon}{2(1+\varepsilon)}
	\right).
	\label{eq:ratio-epsilon}
\end{equation}
Indeed,
\[
\cos\left(\frac{\pi}{2(1+\varepsilon)}\right)
=
\sin\left(\frac{\pi\varepsilon}{2(1+\varepsilon)}\right).
\]
Introduce
\[
y:=\frac{\pi\varepsilon}{2(1+\varepsilon)}.
\]
Since $0<\varepsilon<1$,
\[
0<y<\frac{\pi}{4},
\]
and
\[
\frac{2(1+\varepsilon)}{\varepsilon}
=
\frac{\pi}{y}.
\]
Hence \eqref{eq:ratio-epsilon} takes the particularly simple form
\begin{equation}
	\left(\frac{A_3}{A_\infty}\right)^{1-\alpha}
	=
	\frac{\pi^2}{9y^2}\,
	\sin^{2-\varepsilon}y.
	\label{eq:ratio-y}
\end{equation}
We claim that
\begin{equation}
	\sin^{2-\varepsilon}y>y^2.
	\label{eq:key-ineq}
\end{equation}
Indeed,
\[
\sin y
=
y\,\frac{\sin y}{y},
\]
so \eqref{eq:key-ineq} is equivalent to
\[
\left(\frac{\sin y}{y}\right)^{2-\varepsilon}
y^{-\varepsilon}>1.
\]
Taking logarithms, it is enough to prove
\begin{equation}
	(2-\varepsilon)
	\log\frac{\sin y}{y}
	+
	\varepsilon\log\frac1y>0.
	\label{eq:log-ineq}
\end{equation}
From
\[
y=\frac{\pi\varepsilon}{2(1+\varepsilon)}
\]
we obtain
\begin{equation}
	\frac{\varepsilon}{2-\varepsilon}
	=
	\frac{y}{\pi-3y}.
	\label{eq:eps-y}
\end{equation}
On the other hand, for $0<y\leq\pi/4$,
\[
\frac{\sin y}{y}
\geq
1-\frac{y^2}{6}.
\]
Since $y^2/6<1/2$, we have
\begin{equation}
	-\log\frac{\sin y}{y}
	\leq
	-\log\left(1-\frac{y^2}{6}\right)
	\leq
	\frac{y^2}{3}.
	\label{eq:log-sin-est}
\end{equation}
We next observe that
\begin{equation}
	\frac{y^2}{3}
	<
	\frac{y}{\pi-3y}\log\frac1y,
	\qquad
	0<y\leq\frac{\pi}{4}.
	\label{eq:aux-ineq}
\end{equation}
Indeed, \eqref{eq:aux-ineq} is equivalent to
\[
\log\frac1y>
\frac{y(\pi-3y)}{3}.
\]
Define
\[
q(y):=
\log\frac1y-\frac{\pi y}{3}+y^2.
\]
Then
\[
q'(y)
=
-\frac1y-\frac{\pi}{3}+2y<0
\qquad
\hbox{for }0<y\leq\frac{\pi}{4}.
\]
Therefore $q$ is decreasing on this interval, and
\[
q(y)\geq q\left(\frac{\pi}{4}\right)
=
\log\frac4\pi-\frac{\pi^2}{48}>0.
\]
This proves \eqref{eq:aux-ineq}. Combining
\eqref{eq:log-sin-est}, \eqref{eq:aux-ineq}, and
\eqref{eq:eps-y}, we obtain
\[
-\log\frac{\sin y}{y}
<
\frac{\varepsilon}{2-\varepsilon}
\log\frac1y.
\]
Multiplication by $2-\varepsilon$ gives precisely
\[
(2-\varepsilon)
\log\frac{\sin y}{y}
+
\varepsilon\log\frac1y>0.
\]
Thus \eqref{eq:key-ineq} holds. Returning to \eqref{eq:ratio-y}, we
conclude that
\[
\left(\frac{A_3}{A_\infty}\right)^{1-\alpha}
>
\frac{\pi^2}{9}.
\]
Since $\frac{\pi^2}{9}>1$ and $1-\alpha>0$, it follows that
\[
A_3>A_\infty
\qquad\hbox{for every }0<\alpha<1.
\]
\fineq

\noindent
\begin{rem}
In fact, the preceding argument gives the stronger estimate
\[
\frac{A_3}{A_\infty}
>
\left(\frac{\pi^2}{9}\right)^{\frac{1}{1-\alpha}.
}
\]
Notice that the limiting case $\alpha\uparrow1$ is delicate,
since the two factors in \eqref{eq:ratio-A3} compensate each other. The
restriction $n\geq3$ is essential for this simple uniform statement. The
case $n=2$ is exceptional: depending on $\alpha$, the onset amplitude
$A_2$ may lie either above or below $A_\infty$.
\end{rem}

\begin{rem}[Why the degenerate profile may lie below the profile for $m\equiv-1$]
\label{rem:degenerate-versus-negative}

At first sight, the inequality between the amplitudes may seem
counter-intuitive. Indeed, since $m_n(x)\geq -1$, one might expect the
solution corresponding to the degenerate weight $m_n$ to be larger than the
solution for the strictly negative weight $m\equiv-1$: in the central region
$\Omega_n^0=(-1/n,1/n)$ the absorption term has disappeared completely.
This intuition would in fact be natural for a problem with a prescribed
positive source. For instance, consider
\[
-u''+a(x)u^\alpha=f(x),\qquad f(x)>0,
\]
with the same boundary conditions, and compare two non-negative
coefficients $a_1\leq a_2$. Under the usual assumptions ensuring the
comparison principle, the equation with the smaller coefficient has
less absorption, and one expects the corresponding solution to be
larger. In particular, replacing the coefficient $a\equiv1$ by a
coefficient which vanishes in a subinterval has the natural
interpretation
\[
\hbox{less absorption}\quad\Longrightarrow\quad
\hbox{larger solution}.
\]
The present problem is essentially different, because the right-hand
side is not prescribed. In the region where $m=-1$ the equation reads
\[
-u''+u^\alpha=\lambda u.
\]
Thus the term which plays the role of a source is $\lambda u$ itself.
Changing the solution changes simultaneously the right-hand side, so
the preceding comparison argument is no longer available. Equivalently,
the zero-order term $s\longmapsto s^\alpha-\lambda s$
is not globally monotone on $\mathbb R_+$, since
\[
\frac{d}{ds}\bigl(s^\alpha-\lambda s\bigr)
=
\alpha s^{\alpha-1}-\lambda
\]
changes sign. Hence the pointwise ordering $m_n\geq-1$ does not imply
the analogous ordering of the particular solutions selected on the
free-boundary branch.	
There is, moreover, a second and more precise explanation in the
present one-dimensional problem. For $m\equiv-1$, the zero-energy flat
orbit satisfies
\[
\frac12(u')^2
=
\frac{1}{1+\alpha}u^{1+\alpha}
-\frac{\lambda}{2}u^2,
\]
and its turning point has amplitude
\[
A_\infty(\lambda)
=
\left(
\frac{2}{(1+\alpha)\lambda}
\right)^{1/(1-\alpha)}.
\]
For the degenerate weight $m_n$, this nonlinear orbit is interrupted
at the interface $x=1/n$. Inside $\Omega_n^0$ the nonlinear absorption
term disappears and the even solution is necessarily
\[
u(x)=A_{\lambda,n}\cos(\sqrt{\lambda}\,x).
\]
The global solution is therefore not obtained merely by ``removing
some absorption'' from the solution for $m\equiv-1$; it must satisfy
the additional matching conditions at $x=1/n$. Matching the linear
core with the outer flat orbit gives
\[
A_{\lambda,n}
=
\left(
\frac{2}{(1+\alpha)\lambda}
\right)^{1/(1-\alpha)}
\cos^{(1+\alpha)/(1-\alpha)}
\left(\frac{\sqrt{\lambda}}{n}\right).
\]
Consequently, whenever
$0<\frac{\sqrt{\lambda}}{n}<\frac{\pi}{2}$, we have
\[
A_{\lambda,n}
=
A_\infty(\lambda)
\cos^{(1+\alpha)/(1-\alpha)}
\left(\frac{\sqrt{\lambda}}{n}\right)
<
A_\infty(\lambda).
\]

\noindent
Thus the apparently paradoxical inequality is not a direct monotonicity
effect with respect to the coefficient $m$. It is a global
free-boundary and matching effect. The zero region forces the
insertion of a linear oscillatory core, and the phase accumulated in
that core produces the factor
\[
\cos^{(1+\alpha)/(1-\alpha)}
\left(\frac{\sqrt{\lambda}}{n}\right)<1.
\]
In phase-plane language, the nonlinear zero-energy orbit for $m=-1$
is stopped before reaching its turning point and is joined to the
linear orbit generated inside $\Omega_n^0$. The sublinear character
$0<\alpha<1$ plays a double role. First, it is responsible for the
non-Lipschitz behavior at the origin which permits flat solutions and
free boundaries. Second, it amplifies the loss created by the matching,
because
\[
\frac{1+\alpha}{1-\alpha}>1,
\qquad
\frac{1+\alpha}{1-\alpha}\longrightarrow+\infty
\quad\hbox{as }\alpha\uparrow1.
\]
Thus even a modest phase loss in the cosine may produce a substantial
decrease of the amplitude. In the present equation, the term $\lambda u$
reverses the naive ordering suggested by the absorption coefficient.
\end{rem}

\begin{figure}[tph]
\begin{center}
\includegraphics[width=13.5cm]{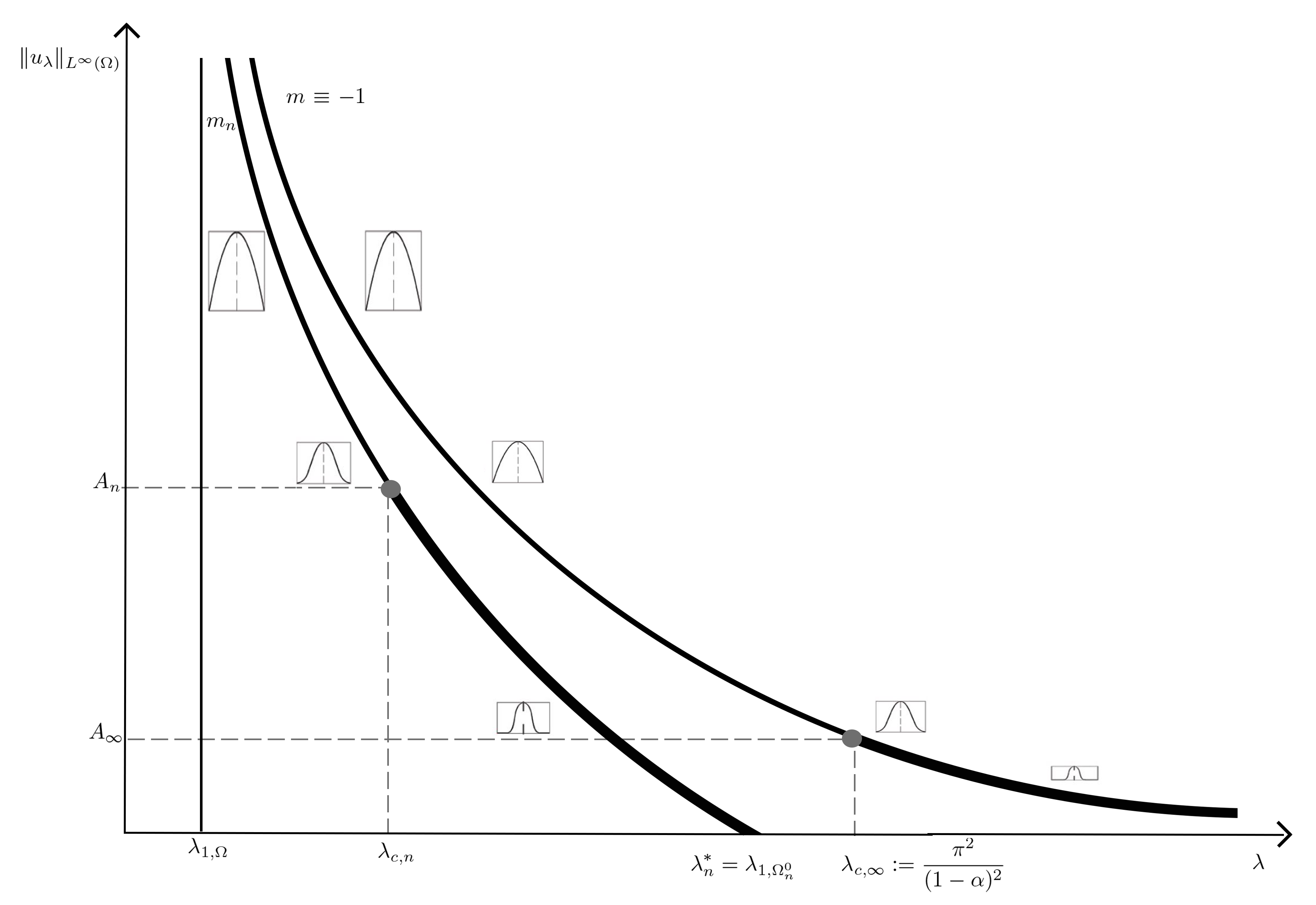}\\[0pt]
\end{center}
\caption{Qualitative behavior of the amplitude bifurcating curves for $m=-1$ and $m_n$ with $n>2$.}
\label{fig:energy-branches}
\end{figure}

\bigskip 

\subsection{\protect\bigskip Convergence of the supports of $u_{\protect%
\lambda }^{-}$ and of the free boundaries as $\protect\lambda \rightarrow 
\protect\lambda ^{\ast }=\protect\lambda _{1,\Omega _{0}}$}

We keep the notation and assumptions of Theorem \ref{thm:Main2}. In particular, 
$0<\alpha <1$,  $m\leq 0\quad \hbox{a.e. in }\Omega$, and $\Omega ^{0}$
is assumed to be a connected domain compactly contained in $\Omega $ such that $|\Omega^0|>0$. Moreover, $\lambda ^{\ast }=\lambda _{1,\Omega _{0}}$,
and $u_{\lambda }^{-}$ denotes the positive-energy branch constructed for $\lambda \in (\lambda _{1,\Omega },\lambda ^{\ast })$.
\begin{theo}
\label{thm:free-boundary-convergence}
Assume the hypotheses of Theorem~\ref{thm:Main2}.
Assume also that, for every $\varepsilon>0$ such that
\[
G_\varepsilon
:=
\{x\in\Omega:\operatorname{dist}(x,\Omega^0)>\varepsilon\}
\neq\varnothing,
\]
there exists a constant $q_\varepsilon>0$ satisfying
\begin{equation}
	-m(x)\geq q_\varepsilon
	\qquad\hbox{for a.e. }x\in G_\varepsilon.
	\label{eq:uniform-negativity-away}
\end{equation}
Set
\[
K_\lambda:=\operatorname{supp}u_\lambda^-,
\qquad
\Gamma_\lambda:=\partial K_\lambda\cap\Omega.
\]
Then, as $\lambda\uparrow\lambda^*=\lambda_{1,\Omega^0}$,
\begin{equation}
	d_H\bigl(K_\lambda,\overline{\Omega^0}\bigr)\longrightarrow0,
	\label{eq:support-Hausdorff}
\end{equation}
and
\begin{equation}
	d_H\bigl(\Gamma_\lambda,\partial\Omega^0\bigr)\longrightarrow0.
	\label{eq:free-boundary-Hausdorff}
\end{equation}
Here $d_H$ denotes the Hausdorff distance between compact subsets of
$\overline{\Omega}$.
\end{theo}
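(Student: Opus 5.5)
The plan is to split the Hausdorff convergence into an outer inclusion, which says $K_\lambda$ lies in small neighborhoods of $\overline{\Omega^0}$, and an inner inclusion, which says $K_\lambda$ fills $\overline{\Omega^0}$. The free-boundary statement then follows from both inclusions together with the positivity of $u_\lambda^-$ inside $\Omega^0$.

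\emph{Outer inclusion.} Fix $\varepsilon>0$ with $G_{\varepsilon}\neq\varnothing$ and set $G:=G_{\varepsilon}$, $q_0:=q_{\varepsilon}$. By Corollary~\ref{Coro L-infinity}, $\|u_\lambda^-\|_{L^\infty(\Omega)}\to0$. Hence, for $\lambda$ close to $\lambda^*$, we have $\|u_\lambda^-\|_\infty<\delta_{\lambda,q_0}$. Here $\delta_{\lambda,q_0}\geq(\alpha q_0/\lambda_{1,\Omega^0})^{1/(1-\alpha)}$, so this threshold is bounded below uniformly in $\lambda$. We also get $\psi_{1/N,q_0}(\|u_\lambda^-\|_\infty)<\varepsilon$, since $\psi$ is continuous with $\psi(0)=0$ and depends continuously on $\lambda$ in a compact range. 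Now take any $x_0$ with $\operatorname{dist}(x_0,\Omega^0)>2\varepsilon$. Then $d_G(x_0)=\operatorname{dist}(x_0,\partial G\setminus\partial\Omega)\geq\varepsilon$, because points of $\partial G$ inside $\Omega$ lie at distance exactly $\varepsilon$ from $\Omega^0$. Theorem~\ref{thm:Comp} therefore gives $u_\lambda^-(x_0)=0$. Consequently $K_\lambda\subset\{\operatorname{dist}(\cdot,\Omega^0)\leq2\varepsilon\}$, and $\sup_{x\in K_\lambda}\operatorname{dist}(x,\overline{\Omega^0})\to0$. The same bound applies to $\Gamma_\lambda\subset K_\lambda$.

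\emph{Inner inclusion.} I would show that $u_\lambda^->0$ in $\Omega^0$ for $\lambda$ close to $\lambda^*$, so that $\overline{\Omega^0}\subset K_\lambda$. In $\Omega^0$ we have $m=0$, so $-\Delta u=\lambda u$ there with $u\geq0$. The strong maximum principle on the connected set $\Omega^0$ gives either $u>0$ or $u\equiv0$ in $\Omega^0$. The latter is excluded for $\lambda$ near $\lambda^*$, because by Theorem~\ref{thm:Main2}(iii) the normalized profiles converge in $L^2$ to $\widetilde\varphi_{1,\Omega^0}$, whose mass is concentrated in $\Omega^0$. Thus $\overline{\Omega^0}\subset K_\lambda\subset\{\operatorname{dist}(\cdot,\Omega^0)\leq2\varepsilon\}$, which proves \eqref{eq:support-Hausdorff}. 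I expect the main obstacle to be the free boundary, since $\Gamma_\lambda$ might a priori contain pieces near $\partial\Omega^0$ while missing parts of it, or contain interior holes. Holes inside $\Omega^0$ are already excluded by positivity. It remains to show that every point $y\in\partial\Omega^0$ has a point of $\Gamma_\lambda$ within $2\varepsilon$. For this, take the outward ray from $y$, or any path from $y$ to a point $z$ with $\operatorname{dist}(z,\Omega^0)>2\varepsilon$, which exists since $\Omega^0\Subset\Omega$ and $\partial\Omega^0$ is $C^{1,1}$ (exterior ball). Along this short path, $u_\lambda^-$ passes from a point of $K_\lambda$ (namely $y$) to a point outside $K_\lambda$. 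The path therefore meets $\partial K_\lambda$ inside $\Omega$ at distance at most about $2\varepsilon$ from $y$, using the exterior ball condition to bound the path length by $C\varepsilon$. Conversely, every point of $\Gamma_\lambda$ lies within $2\varepsilon$ of $\overline{\Omega^0}$ and not in $\Omega^0$, where $u>0$ makes it an interior point of $K_\lambda$. Hence it lies within $2\varepsilon$ of $\partial\Omega^0$. This gives \eqref{eq:free-boundary-Hausdorff}.
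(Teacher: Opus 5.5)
Your proposal is correct and follows essentially the same route as the paper: positivity of $u_\lambda^-$ in $\Omega^0$ from the normalized-profile limit and the strong maximum principle, the dead-core criterion of Theorem~\ref{thm:Comp} applied in $G_\varepsilon$ for the outer inclusion, and a segment argument based on the exterior ball condition for the free boundary. The differences are cosmetic: you get the reverse semidistance directly along the outward normal from $y\in\partial\Omega^0\subset K_\lambda$ (as the paper itself does for $u_\lambda^+$ in Theorem~\ref{thm:free-boundary-convergence-plus}) rather than by contradiction with a segment starting in $\Omega^0$, and you make explicit the uniformity in $\lambda$ of $\delta_{\lambda,q_0}$ and $\psi_{1/N,q_0}$, which the paper uses tacitly.
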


\proof
We divide the proof into three steps.

\medskip
\noindent
\textit{Step 1. The zero region $\Omega^0$ is contained in the positivity
	set of $u_\lambda^-$.}
By Theorem~\ref{thm:Main2},
\[
\frac{u_\lambda^-}
{\|u_\lambda^-\|_{H_0^1(\Omega)}}
\rightarrow
\widetilde{\varphi}_{1,\Omega^0}
\qquad\hbox{strongly in }H_0^1(\Omega),
\]
where $\widetilde{\varphi}_{1,\Omega^0}$ denotes the zero extension to
$\Omega$ of the first Dirichlet eigenfunction of $-\Delta$ in $\Omega^0$.
In particular, for $\lambda$ sufficiently close to $\lambda^*$, the
restriction of $u_\lambda^-$ to $\Omega^0$ is not identically zero.
Since $m=0$ a.e. in $\Omega^0$,
\[
-\Delta u_\lambda^-=\lambda u_\lambda^-
\qquad\hbox{in }\Omega^0.
\]
Therefore, by the strong maximum principle,
\[
u_\lambda^->0
\qquad\hbox{in }\Omega^0.
\]
Consequently, for every $\lambda$ sufficiently close to $\lambda^*$,
\begin{equation}
	\overline{\Omega^0}\subset K_\lambda.
	\label{eq:omega0-in-support}
\end{equation}

\medskip
\noindent
\textit{Step 2. The support cannot remain at a positive distance from
	$\Omega^0$.}
Fix $\varepsilon>0$ such that $G_\varepsilon\neq\emptyset$.
By \eqref{eq:uniform-negativity-away}, there exists $q_\varepsilon>0$
such that
\[
-m(x)\geq q_\varepsilon
\qquad\hbox{a.e. in }G_\varepsilon.
\]
Moreover, by Corollary~\ref{Coro L-infinity},
\[
\|u_\lambda^-\|_{L^\infty(\Omega)}
\rightarrow0
\qquad\hbox{as }\lambda\uparrow\lambda^*.
\]
Hence, for $\lambda$ sufficiently close to $\lambda^*$, all the
smallness assumptions required in the local dead-core criterion of
Theorem~\ref{thm:Comp} are satisfied in $G_\varepsilon$. Let
\[
R_{\lambda,\varepsilon}
:=
\psi_{1/N,q_\varepsilon}
\left(\|u_\lambda^-\|_{L^\infty(\Omega)}\right),
\]
where $\psi_{\mu,q}$ is the function introduced before
Theorem~\ref{thm:Comp}. Since
$\psi_{1/N,q_\varepsilon}(0)=0$, we have
\begin{equation}
	R_{\lambda,\varepsilon}\rightarrow0
	\qquad\hbox{as }\lambda\uparrow\lambda^*.
	\label{eq:R-goes-zero}
\end{equation}
Theorem~\ref{thm:Comp} gives
	\[
	K_\lambda\cap G_\varepsilon
	\subset
	\left\{
	x\in G_\varepsilon:
	\operatorname{dist}
	\bigl(x,\partial G_\varepsilon\setminus\partial\Omega\bigr)
	\leq
	R_{\lambda,\varepsilon}
	\right\}.
	\]
Since the interior boundary of $G_\varepsilon$ is contained in the
level set
\[
\{x\in\Omega:
\operatorname{dist}(x,\Omega^0)=\varepsilon\},
\]
it follows that
\begin{equation}
	K_\lambda
	\subset
		\left\{
		x\in\Omega:
		\operatorname{dist}(x,\Omega^0)
		\leq
		\varepsilon+R_{\lambda,\varepsilon}
		\right\}.
	\label{eq:outer-support-inclusion}
\end{equation}
Combining \eqref{eq:omega0-in-support} and
\eqref{eq:outer-support-inclusion}, we obtain
\[
d_H\bigl(K_\lambda,\overline{\Omega^0}\bigr)
\leq
\varepsilon+R_{\lambda,\varepsilon}.
\]
Taking the upper limit as $\lambda\uparrow\lambda^*$ and using
\eqref{eq:R-goes-zero}, we get
\[
\limsup_{\lambda\uparrow\lambda^*}
d_H\bigl(K_\lambda,\overline{\Omega^0}\bigr)
\leq\varepsilon.
\]
Since $\varepsilon>0$ is arbitrary, this proves
\eqref{eq:support-Hausdorff}.
Moreover, since $\Omega^0\Subset\Omega$, we may choose
	\[
	0<\varepsilon<
	\frac12\operatorname{dist}
	(\overline{\Omega^0},\partial\Omega).
	\]
	Then, by \eqref{eq:R-goes-zero} and
	\eqref{eq:outer-support-inclusion},
	$K_\lambda\Subset\Omega$ for all $\lambda$ sufficiently close to
	$\lambda^*$. In particular, $\Gamma_\lambda$ is compact for such
	$\lambda$.

\medskip
\noindent
\textit{Step 3. Convergence of the free boundaries.}
By Step~1,
\[
u_\lambda^->0
\qquad\hbox{in }\Omega^0,
\]
and hence
$\Gamma_\lambda\cap\Omega^0=\emptyset$.
Together with \eqref{eq:support-Hausdorff}, this yields
\begin{equation}
	\sup_{x\in\Gamma_\lambda}
	\operatorname{dist}(x,\partial\Omega^0)
	\rightarrow0.
	\label{eq:first-semidistance}
\end{equation}

It remains to prove the reverse semidistance. Suppose, by contradiction,
that there exist $\eta>0$, a sequence
$\lambda_n\uparrow\lambda^*$, and points
$y_n\in\partial\Omega^0$ such that
\begin{equation}
	\operatorname{dist}(y_n,\Gamma_{\lambda_n})\geq\eta
	\qquad\hbox{for every }n.
	\label{eq:contradiction-free-boundary}
\end{equation}
Passing to a subsequence, we may assume that
\[
y_n\rightarrow y\in\partial\Omega^0.
\]
Choose
	\[
	0<r<
	\min\left\{
	\frac{\eta}{4},
	\frac14\operatorname{dist}
	(\overline{\Omega^0},\partial\Omega)
	\right\}.
	\]
Since $\partial\Omega^0$ is of class $C^{1,1}$, it satisfies a uniform
exterior ball condition. Hence, for all sufficiently large $n$, there
exist points
\[
z_n\in\Omega\setminus\overline{\Omega^0},
\qquad
|z_n-y_n|<r,
\]
such that
\[
\operatorname{dist}(z_n,\Omega^0)\geq\frac r2.
\]
By the already proved Hausdorff convergence
\eqref{eq:support-Hausdorff},
\[
z_n\notin K_{\lambda_n}
\]
for all sufficiently large $n$.
On the other hand, choose
\[
x_n\in\Omega^0,
\qquad
|x_n-y_n|<r.
\]
By Step~1, $x_n\in\operatorname{int}K_{\lambda_n}$.
Since $z_n\notin K_{\lambda_n}$, the line segment joining $x_n$ to
$z_n$ must meet $\partial K_{\lambda_n}$. By the choice of $r$, this
segment is contained in $\Omega$, and therefore it meets
\[
\partial K_{\lambda_n}\cap\Omega
=
\Gamma_{\lambda_n}
\]
at some point $\xi_n$. Since both $x_n$ and $z_n$ belong to $B_r(y_n)$ and this ball is
	convex,
	\[
	|\xi_n-y_n|<r<\eta.
	\]
Thus
\[
\operatorname{dist}(y_n,\Gamma_{\lambda_n})<\eta,
\]
contradicting \eqref{eq:contradiction-free-boundary}. Therefore
\[
\sup_{y\in\partial\Omega^0}
\operatorname{dist}(y,\Gamma_\lambda)
\longrightarrow0.
\]
Together with \eqref{eq:first-semidistance}, this proves
\eqref{eq:free-boundary-Hausdorff}.$\fin$

\begin{rem}[A quantitative version]
\label{rem:quantitative-free-boundary} Assume, more strongly, that there
exists $q_{\ast }>0$ such that 
\begin{equation*}
-m(x)\geq q_{\ast }\qquad \hbox{for a.e. }x\in \Omega \setminus \overline{%
\Omega ^{0}}.
\end{equation*}%
Then Theorem \ref{thm:Comp} may be applied directly to $G=\Omega \setminus \overline{\Omega ^{0}}$.
Hence 
\begin{equation*}
K_{\lambda }\setminus \overline{\Omega ^{0}}\subset \left\{ x\in \Omega
\setminus \overline{\Omega ^{0}}:\hbox{dist}(x,\partial \Omega
^{0})<R_{\lambda }\right\} ,
\end{equation*}%
where $R_{\lambda }=\psi _{1/N,q_{\ast }}\left( \Vert u_{\lambda }^{-}\Vert
_{L^{\infty }(\Omega )}\right)$. Since, as $s\downarrow 0$, 
\begin{equation*}
\psi _{1/N,q_{\ast }}(s)\sim C_{N,\alpha ,q_{\ast }}\,s^{(1-\alpha )/2},
\end{equation*}%
one obtains the quantitative localization estimate 
\begin{equation*}
\sup_{x\in K_{\lambda }}\hbox{dist}(x,\overline{\Omega ^{0}})\leq
C_{N,\alpha ,q_{\ast }}\Vert u_{\lambda }^{-}\Vert _{L^{\infty }(\Omega
)}^{(1-\alpha )/2}
\end{equation*}%
for $\lambda $ sufficiently close to $\lambda ^{\ast }$. In particular, the
same rate controls the outer semidistance of the free boundary from $%
\partial \Omega ^{0}$.
\end{rem}

\subsection{Localization of the support of $u_{\protect\lambda}^{+}$ as
$\protect\lambda\rightarrow-\infty$}

\begin{theo}
\label{thm:free-boundary-convergence-plus}
Let $0<\alpha<1$, let $\Omega\subset\mathbb R^N$ be bounded and connected,
and let $m\in L^\infty(\Omega)$.
Set $\Omega^+=\{m>0\}$ and assume that $\Omega^+\Subset\Omega$ is nonempty
and connected, $\partial\Omega^+\in C^{1,1}$, and, for some
$m_0,q_0>0$,
\[
m\geq m_0>0
\quad\hbox{a.e. in }\Omega^+,
\qquad
m\leq-q_0<0
\quad\hbox{a.e. in }\Omega\setminus\overline{\Omega^+}.
\]
Let $u_\lambda^+$ be the non-negative solution of \eqref{Eq1} given by
	Theorem~\ref{thm1} for $\lambda<0$, and put
\[
K_\lambda=\operatorname{supp}u_\lambda^+,
\qquad
\Gamma_\lambda=\partial K_\lambda\cap\Omega.
\]
Then there exists $C>0$ such that, for all sufficiently negative $\lambda$,
\begin{equation}
	\overline{\Omega^+}
	\subset K_\lambda
	\subset
	\left\{
	x\in\Omega:
	\operatorname{dist}(x,\overline{\Omega^+})
	<C|\lambda|^{-1/2}
	\right\}.
	\label{Conclus1}
\end{equation}
Consequently,
\begin{equation}
	d_H(K_\lambda,\overline{\Omega^+})
	\leq C|\lambda|^{-1/2}
	\rightarrow0,
	\label{Conclusion2}
\end{equation}
and, possibly enlarging $C$,
\begin{equation}
	d_H(\Gamma_\lambda,\partial\Omega^+)
	\leq C|\lambda|^{-1/2}
	\rightarrow0.
	\label{Conclusion3}
\end{equation}
\end{theo}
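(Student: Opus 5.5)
The plan is to get both inclusions in \eqref{Conclus1} from two ingredients. The inner one comes from a strong maximum principle argument on $\Omega^+$. The outer one comes from the local dead-core criterion, Theorem~\ref{thm:Comp}, applied on $G:=\Omega\setminus\overline{\Omega^+}$ together with the amplitude bound of Lemma~\ref{lem:Linfty-negative-lambda}. The free-boundary estimate \eqref{Conclusion3} then follows by repeating Step~3 of the proof of Theorem~\ref{thm:free-boundary-convergence}, but with an explicit radius.

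\emph{Inner inclusion.} Write $u=u_\lambda^+$ with $\lambda<0$. In $\Omega^+$ we have $-\Delta u+|\lambda|u=m u^\alpha\geq 0$, so $u$ is a non-negative supersolution of a linear operator with non-negative zero-order coefficient. Since $\Omega^+$ is connected, the strong maximum principle gives either $u\equiv0$ in $\Omega^+$ or $u>0$ in $\Omega^+$. The first alternative is impossible. Indeed, it would force $M(u)=\int_{\Omega\setminus\Omega^+}m u^{\alpha+1}\,dx\leq0$, whereas $u\in\mathcal N_\lambda^+$ satisfies $M(u)=H_\lambda(u)>0$ by Proposition~\ref{prop:crJE2}. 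Hence $u>0$ in $\Omega^+$, so $\overline{\Omega^+}\subset K_\lambda$ and $\Gamma_\lambda\cap\Omega^+=\emptyset$.

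\emph{Outer inclusion.} On $G$ we have $-m\geq q_0$. Since $\lambda<0$, $\delta_{\lambda,q_0}=+\infty$, so the smallness hypotheses of Theorem~\ref{thm:Comp} hold automatically. Moreover $\partial G\setminus\partial\Omega=\partial\Omega^+$, which gives $d_G(x)=\operatorname{dist}(x,\overline{\Omega^+})$. Theorem~\ref{thm:Comp} therefore yields $u(x)=0$ whenever $\operatorname{dist}(x,\overline{\Omega^+})\geq R_\lambda:=\psi_{1/N,q_0}(\|u\|_\infty)$. The key quantitative step is to bound $\psi$ for $\lambda<0$. There the term $-\tfrac{\lambda}{2}s^2$ is non-negative, so dropping it only enlarges the integrand:
\[
\psi_{1/N,q_0}(\tau)\leq\sqrt{\tfrac N2}\int_0^\tau\Bigl(\tfrac{q_0}{\alpha+1}\Bigr)^{-1/2}s^{-\frac{1+\alpha}{2}}\,ds=C_{N,\alpha,q_0}\,\tau^{\frac{1-\alpha}{2}}.
\]
Lemma~\ref{lem:Linfty-negative-lambda} gives $\|u\|_\infty\leq(\|m^+\|_\infty/|\lambda|)^{1/(1-\alpha)}$, hence $R_\lambda\leq C|\lambda|^{-1/2}$. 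Enlarging $C$ slightly makes the inequality strict, which is \eqref{Conclus1}. Combined with the inner inclusion, this gives \eqref{Conclusion2}.

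\emph{Free boundary.} Since $\Gamma_\lambda\subset K_\lambda\setminus\Omega^+$, every point of $\Gamma_\lambda$ lies within $C|\lambda|^{-1/2}$ of $\overline{\Omega^+}$, and hence of $\partial\Omega^+$. For the reverse semidistance, fix $y\in\partial\Omega^+$ and let $\nu(y)$ be the exterior normal. Set $r:=2C|\lambda|^{-1/2}$, which is smaller than the uniform exterior-ball radius of the $C^{1,1}$ boundary and smaller than $\tfrac12\operatorname{dist}(\overline{\Omega^+},\partial\Omega)$ once $\lambda$ is negative enough. Then $z:=y+r\nu(y)$ satisfies $\operatorname{dist}(z,\overline{\Omega^+})=r$, so $z\notin K_\lambda$. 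On the other hand, the point $x:=y-\tfrac r2\nu(y)$ lies in $\Omega^+\subset\operatorname{int}K_\lambda$. The segment $[x,z]$ stays in $\Omega$, so it meets $\Gamma_\lambda$ at some point within distance $r$ of $y$. This gives \eqref{Conclusion3} with a larger constant. The main point to check carefully is that the exterior ball at $y$ indeed produces $\operatorname{dist}(z,\overline{\Omega^+})=r$ uniformly in $y$; this holds once $r$ is below the exterior-ball radius of $\partial\Omega^+$. The $\psi$ estimate itself is elementary once the $\lambda$-term is discarded.
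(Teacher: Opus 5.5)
Your proposal is correct and follows the paper's proof. You get positivity in $\Omega^+$ from the strong maximum principle together with $M(u_\lambda^+)>0$. The outer inclusion comes from Theorem~\ref{thm:Comp} on $G=\Omega\setminus\overline{\Omega^+}$, with the same bound $\psi_{1/N,q_0}(\tau)\leq C\tau^{(1-\alpha)/2}$ combined with Lemma~\ref{lem:Linfty-negative-lambda}. The free-boundary estimate uses the normal-segment argument.

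The only difference is that the paper additionally compares $u_\lambda^+$ with the positive solution $U_\lambda$ of the Dirichlet problem in $\Omega^+$ via Lemma~\ref{lem:one-sided-general-comparison}. That comparison gives the extra lower bound $u_\lambda^+\geq U_\lambda$, but no stated conclusion needs it, so omitting it is legitimate.
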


\bigskip

Before presenting the proof, it is useful to obtain a comparison result for
solutions to this non-monotone elliptic equation when the boundary traces
are ordered.

\begin{lemma}
\label{lem:one-sided-general-comparison}
Let $D\subset\mathbb R^N$ be a bounded Lipschitz domain and let
$F:D\times(0,+\infty)\to\mathbb R$ be a Carath\'eodory function such that,
for a.e. $x\in D$, the map
\begin{equation}
	s\mapsto\frac{F(x,s)}{s}
	\label{eq:structural-condition}
\end{equation}
is strictly decreasing on $(0,+\infty)$. Let
$U,V\in H^1(D)\cap L^\infty(D)$, with $U>0$, $V>0$ a.e. in $D$, and assume
the one-sided trace condition
\begin{equation}
	(U-V)^+\in H_0^1(D).
	\label{eq:trace-condition}
\end{equation}
Suppose that $U$ is a weak subsolution and $V$ a weak supersolution of
\begin{equation}
	-\Delta w=F(x,w)
	\qquad\hbox{in }D,
	\label{eq:general-equation}
\end{equation}
that is,
\begin{align}
	\int_D\nabla U\cdot\nabla\varphi\,dx
	&\leq
	\int_DF(x,U)\varphi\,dx,
	\label{eq:subsolution}\\
	\int_D\nabla V\cdot\nabla\varphi\,dx
	&\geq
	\int_DF(x,V)\varphi\,dx
	\label{eq:supersolution}
\end{align}
for every non-negative
$\varphi\in H_0^1(D)\cap L^\infty(D)$ for which the right-hand sides are
finite. Assume, moreover, that
\begin{equation}
	F(\cdot,U),F(\cdot,V)\in L^1_{\rm loc}(D)
	\label{eq:local-integrability}
\end{equation}
and
\begin{equation}
	\left(
	\frac{|F(x,U)|}{U}
	+
	\frac{|F(x,V)|}{V}
	\right)
	(U^2-V^2)^+
	\in L^1(D).
	\label{eq:integrability-assumption}
\end{equation}
Then
\[
U\leq V
\qquad\hbox{a.e. in }D.
\]
\end{lemma}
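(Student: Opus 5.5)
This is the Brezis--Oswald / D\'iaz--Saa type uniqueness argument. We test the two inequalities with the Picone-type quotients
\[
\varphi_1:=\frac{(U^2-V^2)^+}{U},
\qquad
\varphi_2:=\frac{(U^2-V^2)^+}{V},
\]
subtract, and use the hidden convexity of the Dirichlet integral. The strict monotonicity of $F(x,s)/s$ then forces the positive set $\{U>V\}$ to be negligible.

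\textbf{Admissibility of the test functions.} This step is the main obstacle, because $U,V$ are only positive a.e. and need not be bounded below. I would first truncate, replacing $U,V$ by $U_\varepsilon:=U+\varepsilon$ and $V_\varepsilon:=V+\varepsilon$ in the denominators, i.e.\ use
\[
\varphi_1^\varepsilon=\frac{(U_\varepsilon^2-V_\varepsilon^2)^+}{U_\varepsilon},
\qquad
\varphi_2^\varepsilon=\frac{(U_\varepsilon^2-V_\varepsilon^2)^+}{V_\varepsilon}.
\]
On $\{U>V\}$ one has $\varphi_1^\varepsilon\le U_\varepsilon+V_\varepsilon$ and $\varphi_2^\varepsilon\le (U_\varepsilon+V_\varepsilon)(U-V)^+/V_\varepsilon$. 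Both are bounded, non-negative, and belong to $H^1(D)$ since $U,V\in H^1\cap L^\infty$ and the denominators are $\ge\varepsilon$. They lie in $H_0^1(D)$ because they vanish where $(U-V)^+$ does and are controlled by a multiple of $(U-V)^+$, which is in $H_0^1(D)$ by \eqref{eq:trace-condition}. The standard lattice argument gives this: a bounded Lipschitz function of $(U,V)$ times $(U-V)^+$ stays in $H_0^1$. Finiteness of the right-hand sides follows from \eqref{eq:local-integrability} together with \eqref{eq:integrability-assumption}.

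\textbf{Picone inequality and subtraction.} Plugging $\varphi_1^\varepsilon$ into \eqref{eq:subsolution} and $\varphi_2^\varepsilon$ into \eqref{eq:supersolution} and subtracting, the left side equals
\[
\int_{\{U>V\}}\Bigl(\Bigl|\nabla U-\tfrac{U_\varepsilon}{V_\varepsilon}\nabla V\Bigr|^2+\Bigl|\nabla V-\tfrac{V_\varepsilon}{U_\varepsilon}\nabla U\Bigr|^2\Bigr)dx\ \ge0
\]
(with $\nabla U_\varepsilon=\nabla U$). The right side is at most
\[
\int_{\{U>V\}}\Bigl(\frac{F(x,U)}{U_\varepsilon}-\frac{F(x,V)}{V_\varepsilon}\Bigr)(U_\varepsilon^2-V_\varepsilon^2)\,dx .
\]
So this integral is $\ge0$ for every $\varepsilon>0$.

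\textbf{Limit $\varepsilon\to0$.} The integrand converges a.e.\ to
\[
\Bigl(\frac{F(x,U)}{U}-\frac{F(x,V)}{V}\Bigr)(U^2-V^2)^+ .
\]
It is dominated by
\[
\Bigl(\frac{|F(x,U)|}{U}+\frac{|F(x,V)|}{V}\Bigr)(U^2-V^2)^+ + C(U-V)^+\,\bigl(|F(x,U)|/U+|F(x,V)|/V\bigr)\varepsilon\text{-terms},
\]
all integrable by \eqref{eq:integrability-assumption}. Since $U_\varepsilon>U$, we have $1/U_\varepsilon\le 1/U$, and similarly for $V$, so dominated convergence applies and gives
\[
\int_{\{U>V\}}\Bigl(\frac{F(x,U)}{U}-\frac{F(x,V)}{V}\Bigr)(U^2-V^2)\,dx\ge0 .
\]

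\textbf{Conclusion.} On $\{U>V\}$, strict decrease of $s\mapsto F(x,s)/s$ makes the integrand strictly negative. Hence $|\{U>V\}|=0$, that is, $U\le V$ a.e.\ in $D$.

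If the $\varepsilon$-regularization proves awkward for the gradient identity, I would instead truncate the quotients by $\min\{\cdot,k\}$ and pass $k\to\infty$ with Fatou on the nonnegative gradient terms.
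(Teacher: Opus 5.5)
Your proposal is correct and follows essentially the same route as the paper. Both use the shift $U_\varepsilon=U+\varepsilon$, $V_\varepsilon=V+\varepsilon$, the Picone-type test functions $W_\varepsilon/U_\varepsilon$ and $W_\varepsilon/V_\varepsilon$ with $W_\varepsilon=(U_\varepsilon^2-V_\varepsilon^2)^+$, dominated convergence on the reaction side, and the strict decrease of $F(x,s)/s$. Keeping only the nonnegativity of the gradient side for each $\varepsilon$, instead of the paper's Fatou step, is enough.

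One step needs tightening: the dominating function for the $\varepsilon$-correction $2\varepsilon(U-V)^+|F(x,U)|/U_\varepsilon$. Bound it using $\varepsilon/U_\varepsilon\le 1$ and $U(U-V)^+\le (U^2-V^2)^+$ (and likewise $V(U-V)^+\le(U^2-V^2)^+$). This gives $3\,\frac{|F(x,U)|}{U}(U^2-V^2)^+$, as in the paper. The bound $1/U_\varepsilon\le 1/U$ alone would require $\frac{|F(x,U)|}{U}(U-V)^+\in L^1(D)$, which the integrability hypothesis does not directly give.
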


\proof
Set $E:=\{x\in D:U(x)>V(x)\}$. We will prove that $|E|=0$.
We use the pointwise algebraic identity associated with the works of
Brezis and Oswald \cite{Brezis-Oswald}, D\'iaz and Sa\'a
\cite{DiazSaa}, and, in a pioneering form and a different framework,
Picone \cite{Picone}
\begin{align}
	&\nabla U\cdot
	\nabla\left(\frac{U^2-V^2}{U}\right)
	-
	\nabla V\cdot
	\nabla\left(\frac{U^2-V^2}{V}\right)
	\nonumber\\
	&\quad=
	\left|\nabla U-\frac{U}{V}\nabla V\right|^2
	+
	\left|\nabla V-\frac{V}{U}\nabla U\right|^2
	\geq0.
	\label{eq:picone-general}
\end{align}
This formal computation is classical, so we do not reproduce its
derivation. The delicate point here is that the two functions may have
different boundary traces and the quotients $1/U$ and $1/V$ may become
singular. We therefore give the regularization and passage to the limit.
For $\varepsilon>0$, set
\[
U_\varepsilon:=U+\varepsilon,
\qquad
V_\varepsilon:=V+\varepsilon,
\]
and define
\begin{equation}
	W_\varepsilon
	:=
	(U_\varepsilon^2-V_\varepsilon^2)^+.
	\label{eq:Weps}
\end{equation}
Since $U_\varepsilon-V_\varepsilon=U-V$,
\[
\{U_\varepsilon>V_\varepsilon\}=E.
\]
Moreover, on $E$,
\begin{equation}
	W_\varepsilon
	=
	(U-V)(U+V+2\varepsilon)
	=
	(U^2-V^2)+2\varepsilon(U-V).
	\label{eq:Weps-on-E}
\end{equation}
Because $U,V\in H^1(D)\cap L^\infty(D)$,
$U_\varepsilon^2,V_\varepsilon^2\in H^1(D)$.
Condition \eqref{eq:trace-condition} means that the trace of $U$ does not
exceed that of $V$. Hence the trace of $W_\varepsilon$ is zero, and
$W_\varepsilon\in H_0^1(D)$. We may therefore introduce
\begin{equation}
	\varphi_\varepsilon
	:=
	\frac{W_\varepsilon}{U_\varepsilon},
	\qquad
	\psi_\varepsilon
	:=
	\frac{W_\varepsilon}{V_\varepsilon}.
	\label{eq:regularized-tests}
\end{equation}
Since $U_\varepsilon,V_\varepsilon\geq\varepsilon$, both are non-negative
elements of $H_0^1(D)\cap L^\infty(D)$ and are admissible in
\eqref{eq:subsolution}--\eqref{eq:supersolution}. Using
$\varphi_\varepsilon$ in the subsolution inequality and
$\psi_\varepsilon$ in the supersolution inequality, and subtracting, we
obtain
\begin{align}
	I_\varepsilon
	:={}&
	\int_D
	\left\{
	\nabla U\cdot\nabla\varphi_\varepsilon
	-
	\nabla V\cdot\nabla\psi_\varepsilon
	\right\}\,dx
	\nonumber\\
	\leq{}&
	\int_D
	\left[
	\frac{F(x,U)}{U_\varepsilon}
	-
	\frac{F(x,V)}{V_\varepsilon}
	\right]
	W_\varepsilon\,dx.
	\label{eq:regularized-integral-inequality}
\end{align}
Since $\nabla U_\varepsilon=\nabla U$ and
$\nabla V_\varepsilon=\nabla V$, identity
\eqref{eq:picone-general}, applied to
$(U_\varepsilon,V_\varepsilon)$, gives on $E$
\begin{align}
	&\nabla U\cdot\nabla\varphi_\varepsilon
	-
	\nabla V\cdot\nabla\psi_\varepsilon
	\nonumber\\
	&\quad=
	\left|
	\nabla U-\frac{U_\varepsilon}{V_\varepsilon}\nabla V
	\right|^2
	+
	\left|
	\nabla V-\frac{V_\varepsilon}{U_\varepsilon}\nabla U
	\right|^2
	\geq0,
	\label{eq:regularized-picone}
\end{align}
whereas both sides vanish outside $E$. Hence
\begin{equation}
	I_\varepsilon\geq0.
	\label{eq:Ieps-positive}
\end{equation}
We now pass to the limit $\varepsilon\downarrow0$. For a.e. $x\in E$,
since $U(x),V(x)>0$,
\[
\frac{U_\varepsilon}{V_\varepsilon}\to\frac UV,
\qquad
\frac{V_\varepsilon}{U_\varepsilon}\to\frac VU.
\]
The integrand in \eqref{eq:regularized-picone} is non-negative.
Therefore Fatou's lemma gives
\begin{align}
	&\int_E
	\left[
	\left|\nabla U-\frac UV\nabla V\right|^2
	+
	\left|\nabla V-\frac VU\nabla U\right|^2
	\right]dx
	\nonumber\\
	&\qquad\leq
	\liminf_{\varepsilon\downarrow0}I_\varepsilon.
	\label{eq:fatou-picone}
\end{align}
This is why no a priori bound on $U/V$ or $V/U$ is needed
(in contrast to \cite{Brezis-Oswald}).	
It remains to pass to the limit in the right-hand side of
\eqref{eq:regularized-integral-inequality}. On $E$, by
\eqref{eq:Weps-on-E},
\[
W_\varepsilon=W+2\varepsilon(U-V),
\qquad
W:=(U^2-V^2)^+.
\]
For the first term,
\begin{align}
	\left|
	\frac{F(x,U)}{U_\varepsilon}W_\varepsilon
	\right|
	&\leq
	\frac{|F(x,U)|}{U}\,W
	+
	2|F(x,U)|(U-V)
	\nonumber\\
	&\leq
	3\,\frac{|F(x,U)|}{U}\,W,
	\label{eq:dominate-U}
\end{align}
because
\[
U(U-V)\leq(U-V)(U+V)=W
\qquad\hbox{on }E.
\]
Similarly,
\begin{align}
	\left|
	\frac{F(x,V)}{V_\varepsilon}W_\varepsilon
	\right|
	&\leq
	\frac{|F(x,V)|}{V}\,W
	+
	2|F(x,V)|(U-V)
	\nonumber\\
	&\leq
	3\,\frac{|F(x,V)|}{V}\,W,
	\label{eq:dominate-V}
\end{align}
because $V(U-V)\leq W$ on $E$.
The right-hand sides of \eqref{eq:dominate-U} and
\eqref{eq:dominate-V} are integrable by
\eqref{eq:integrability-assumption}. Thus dominated convergence yields
\begin{align}
	&\lim_{\varepsilon\downarrow0}
	\int_D
	\left[
	\frac{F(x,U)}{U_\varepsilon}
	-
	\frac{F(x,V)}{V_\varepsilon}
	\right]
	W_\varepsilon\,dx
	\nonumber\\
	&\qquad=
	\int_E
	\left[
	\frac{F(x,U)}{U}
	-
	\frac{F(x,V)}{V}
	\right]
	(U^2-V^2)\,dx.
	\label{eq:reaction-limit}
\end{align}
Combining \eqref{eq:regularized-integral-inequality},
\eqref{eq:fatou-picone}, and \eqref{eq:reaction-limit}, we obtain the
one-sided D\'iaz--Sa\'a inequality \cite{DiazSaa}
(see also \cite{Diaz PAFA}):
\begin{align}
	0
	&\leq
	\int_E
	\left[
	\left|\nabla U-\frac UV\nabla V\right|^2
	+
	\left|\nabla V-\frac VU\nabla U\right|^2
	\right]dx
	\nonumber\\
	&\leq
	\int_E
	\left[
	\frac{F(x,U)}{U}
	-
	\frac{F(x,V)}{V}
	\right]
	(U^2-V^2)\,dx.
	\label{eq:final-one-sided-integral}
\end{align}
Finally, on $E$, $U>V>0$. By the strict monotonicity assumption
\eqref{eq:structural-condition},
\[
\frac{F(x,U)}{U}
<
\frac{F(x,V)}{V}
\qquad\hbox{for a.e. }x\in E,
\]
while $U^2-V^2>0$. Hence the last integral in
\eqref{eq:final-one-sided-integral} is strictly negative whenever
$|E|>0$, contradicting its non-negativity. Therefore $|E|=0$, and
$U\leq V$ a.e. in $D.\fin$

\begin{rem}
\label{rem:BO-different-traces}
The structural condition \eqref{eq:structural-condition} is the classical
Brezis--Oswald subhomogeneity condition. The point of
Lemma~\ref{lem:one-sided-general-comparison} is that the two functions need
not have the same Dirichlet trace; the only boundary assumption is the
ordered-trace condition
$(U-V)^+\in H_0^1(D)$. For this reason, the usual symmetric
Brezis--Oswald identity (see expression (10) in \cite{Brezis-Oswald})
cannot be invoked directly. If, for instance, $U=0$ and $V\geq0$ on
$\partial D$, with $V$ positive on a portion of the boundary, the quotient
$V^2/U$ may be singular there and need not be an admissible test function.
The regularization
\[
U_\varepsilon=U+\varepsilon,
\qquad
V_\varepsilon=V+\varepsilon,
\qquad
W_\varepsilon=(U_\varepsilon^2-V_\varepsilon^2)^+
\]
avoids this difficulty and tests only the region where the desired ordering
could fail. Thus \eqref{eq:final-one-sided-integral} is the appropriate
one-sided version of the Brezis--Oswald/D\'iaz--Sa\'a/Picone mechanism for
ordered, possibly different, boundary traces.
\end{rem}

\par
\noindent
{\sc Proof of Theorem~\ref{thm:free-boundary-convergence-plus}.}
For $\lambda<0$, let $U_\lambda$ be the unique positive solution of
\[
-\Delta U+|\lambda|U=m(x)U^\alpha
\quad\hbox{in }\Omega^+,
\qquad
U=0
\quad\hbox{on }\partial\Omega^+
\]
(see, e.g., \cite{Diaz PAFA}). The restriction of $u_\lambda^+$ to
$\Omega^+$ satisfies the same equation and has non-negative boundary trace.
We first note that $u_\lambda^+>0$ in $\Omega^+$. Indeed,
$M(u_\lambda^+)>0$, and hence $u_\lambda^+$ is not identically zero in
$\Omega^+$. Moreover,
\[
-\Delta u_\lambda^+ +|\lambda|u_\lambda^+
=
m(x)(u_\lambda^+)^\alpha\geq0
\qquad\hbox{in }\Omega^+.
\]
Since $\Omega^+$ is connected, the strong maximum principle yields
\[
u_\lambda^+>0
\qquad\hbox{in }\Omega^+.
\]	
To apply Lemma~\ref{lem:one-sided-general-comparison}, set
\[
F(x,s):=-|\lambda|s+m(x)s^\alpha.
\]
Then
\[
\frac{F(x,s)}{s}
=
-|\lambda|+m(x)s^{\alpha-1}
\]
is strictly decreasing in $s>0$ for a.e. $x\in\Omega^+$, since
$m(x)\geq m_0>0$ and $0<\alpha<1$. Moreover,
$U_\lambda=0$ on $\partial\Omega^+$ and the trace of $u_\lambda^+$ is
non-negative, so
\[
(U_\lambda-u_\lambda^+)^+\in H_0^1(\Omega^+).
\]	
It remains to verify \eqref{eq:integrability-assumption}. Put
\[
E_\lambda
:=
\{x\in\Omega^+:U_\lambda(x)>u_\lambda^+(x)\}.
\]
On $E_\lambda$,
\[
\frac{|F(x,U_\lambda)|}{U_\lambda}
\leq
|\lambda|+\|m\|_\infty U_\lambda^{\alpha-1}.
\]
Since
\[
\bigl(U_\lambda^2-(u_\lambda^+)^2\bigr)^+
\leq U_\lambda^2,
\]
we obtain
\[
\frac{|F(x,U_\lambda)|}{U_\lambda}
\bigl(U_\lambda^2-(u_\lambda^+)^2\bigr)^+
\leq
|\lambda|U_\lambda^2
+
\|m\|_\infty U_\lambda^{\alpha+1},
\]
which belongs to $L^1(\Omega^+)$.	
For the term involving $u_\lambda^+$, we claim that
$U_\lambda/u_\lambda^+$ is bounded on $E_\lambda$.
Away from $\partial\Omega^+$ this follows from the positivity and
continuity of $u_\lambda^+$. Near a boundary point at which the trace of
$u_\lambda^+$ is positive, the set $E_\lambda$ is empty in a sufficiently
small neighborhood, since $U_\lambda=0$ on $\partial\Omega^+$. At a
boundary point at which the trace of $u_\lambda^+$ vanishes, the Hopf
boundary lemma, applied to
\[
-\Delta u_\lambda^+ +|\lambda|u_\lambda^+
=
m(x)(u_\lambda^+)^\alpha>0
\qquad\hbox{in }\Omega^+,
\]
gives a linear lower bound for $u_\lambda^+$ in terms of
$\operatorname{dist}(x,\partial\Omega^+)$, whereas
$U_\lambda\in C^1(\overline{\Omega^+})$ and
$U_\lambda=0$ on $\partial\Omega^+$ give a corresponding linear upper
bound. By compactness of the zero-trace part of $\partial\Omega^+$,
a finite covering yields
\[
U_\lambda\leq C_\lambda u_\lambda^+
\qquad\hbox{on }E_\lambda
\]
for some $C_\lambda>0$.
Consequently,
\[
\begin{aligned}
	&\frac{|F(x,u_\lambda^+)|}{u_\lambda^+}
	\bigl(U_\lambda^2-(u_\lambda^+)^2\bigr)^+
	\\
	&\qquad\leq
	|\lambda|U_\lambda^2
	+
	\|m\|_\infty
	(u_\lambda^+)^{\alpha-1}U_\lambda^2
	\\
	&\qquad\leq
	|\lambda|U_\lambda^2
	+
	C_\lambda^2\|m\|_\infty
	(u_\lambda^+)^{\alpha+1},
\end{aligned}
\]
which is integrable in $\Omega^+$. Thus
\eqref{eq:integrability-assumption} holds.
Therefore Lemma~\ref{lem:one-sided-general-comparison}, with
\[
D=\Omega^+,\qquad
U=U_\lambda,\qquad
V=u_\lambda^+|_{\Omega^+},
\]
gives
\[
u_\lambda^+\geq U_\lambda>0
\qquad\hbox{in }\Omega^+.
\]
Hence
\begin{equation}
\overline{\Omega^+}\subset K_\lambda.
\label{Estim7}
\end{equation}
Set $G=\Omega\setminus\overline{\Omega^+}$. By the assumptions of the
theorem,
\[
-m(x)\geq q_0
\qquad\hbox{for a.e. }x\in G.
\]
Lemma~\ref{lem:Linfty-negative-lambda} gives
\begin{equation}
\|u_\lambda^+\|_\infty
\leq
\left(
\frac{\|m^+\|_\infty}{|\lambda|}
\right)^{1/(1-\alpha)}.
\label{estim8}
\end{equation}
Applying Theorem~\ref{thm:Comp} in $G$ yields
\[
K_\lambda\cap G
\subset
\left\{
x\in G:
\operatorname{dist}(x,\partial\Omega^+)\leq R_\lambda
\right\},
\]
where
\[
R_\lambda
=
\psi_{1/N,q_0}(\|u_\lambda^+\|_\infty).
\]
Since
\[
\psi_{1/N,q_0}(\tau)
\leq
\frac{\sqrt{2N(\alpha+1)/q_0}}{1-\alpha}
\,\tau^{(1-\alpha)/2},
\]
\eqref{estim8} gives
\begin{equation}
R_\lambda\leq C_0|\lambda|^{-1/2}.
\label{9}
\end{equation}
Choose a fixed $C>C_0$. Then, for all sufficiently negative $\lambda$,
\[
K_\lambda
\subset
\left\{
x\in\Omega:
\operatorname{dist}(x,\overline{\Omega^+})
<
C|\lambda|^{-1/2}
\right\}.
\]
Together with \eqref{Estim7}, this proves \eqref{Conclus1} and
\eqref{Conclusion2}.

Put
\[
\varepsilon_\lambda=C|\lambda|^{-1/2}.
\]
Then
\[
\overline{\Omega^+}
\subset K_\lambda
\subset
(\overline{\Omega^+})_{\varepsilon_\lambda},
\]
where
\[
(\overline{\Omega^+})_{\varepsilon_\lambda}
:=
\left\{
x\in\Omega:
\operatorname{dist}(x,\overline{\Omega^+})
<\varepsilon_\lambda
\right\}.
\]
Since $u_\lambda^+>0$ in $\Omega^+$, no point of $\Gamma_\lambda$ lies in
$\Omega^+$, and for $x\in\Gamma_\lambda$,
\[
\operatorname{dist}(x,\overline{\Omega^+})
=
\operatorname{dist}(x,\partial\Omega^+).
\]
Hence
\begin{equation}
\sup_{x\in\Gamma_\lambda}
\operatorname{dist}(x,\partial\Omega^+)
\leq
\varepsilon_\lambda.
\label{Estim10}
\end{equation}
Conversely, let $y\in\partial\Omega^+$ and let $\nu(y)$ denote the exterior
unit normal. Since $\partial\Omega^+\in C^{1,1}$, there exists a uniform
tubular neighborhood of $\partial\Omega^+$. Moreover,
$\Omega^+\Subset\Omega$. Hence, for all sufficiently negative $\lambda$,
\[
z_\lambda
:=
y+2\varepsilon_\lambda\nu(y)
\in\Omega
\]
and
\[
\operatorname{dist}
(z_\lambda,\overline{\Omega^+})
=
2\varepsilon_\lambda.
\]
Thus $z_\lambda\notin K_\lambda$, whereas $y\in K_\lambda$ by
\eqref{Estim7}. If $y\in\Gamma_\lambda$, there is nothing to prove.
Otherwise, the segment joining $y$ to $z_\lambda$ must leave the closed set
$K_\lambda$ at some point
$\xi_\lambda\in\Gamma_\lambda$, and
\[
|\xi_\lambda-y|\leq2\varepsilon_\lambda.
\]
Therefore
\begin{equation}
\sup_{y\in\partial\Omega^+}
\operatorname{dist}(y,\Gamma_\lambda)
\leq
2\varepsilon_\lambda.
\label{Estim11}
\end{equation}
Equations \eqref{Estim10}--\eqref{Estim11} prove
\eqref{Conclusion3}.$\fin$

\begin{rem}
Setting
\[
\mu=|\lambda|=-\lambda,
\qquad
v_\mu=\mu^{1/(1-\alpha)}u_{-\mu}^+,
\]
the equation in $\Omega^+$ becomes exactly
\begin{equation}
	-\frac1\mu\Delta v_\mu+v_\mu=m(x)v_\mu^\alpha.
	\label{4}
\end{equation}
Thus, away from $\partial\Omega^+$, the formal dominant balance is
$v=m(x)v^\alpha$, so that on the positive branch
\begin{equation}
	v=m(x)^{1/(1-\alpha)}.
	\label{5}
\end{equation}
Equivalently, the expected leading-order interior behavior is
\begin{equation}
	u_\lambda^+(x)
	\sim
	|\lambda|^{-1/(1-\alpha)}
	m(x)^{1/(1-\alpha)}.
	\label{6}
\end{equation}
The same rescaling shows that the natural transition-layer thickness is
$\mu^{-1/2}=|\lambda|^{-1/2}$, exactly the scale appearing in
\eqref{Conclus1}--\eqref{Conclusion3}. Indeed, write
$\lambda=-\mu$, $\mu\to+\infty$, and set
\[
u_{-\mu}^+=\mu^{-1/(1-\alpha)}v_\mu.
\]
Substitution into
\[
-\Delta u_{-\mu}^+
+\mu u_{-\mu}^+
=
m(x)(u_{-\mu}^+)^\alpha
\]
gives the exact equation \eqref{4}. The zero-order and nonlinear terms
therefore have the same order, whereas diffusion carries the small
coefficient $1/\mu$. Formally letting $\mu\to+\infty$ on compact subsets of
$\Omega^+$ gives \eqref{5}, and hence \eqref{6}. Finally, writing
$\varepsilon^2=1/\mu$ gives
\[
-\varepsilon^2\Delta v_\mu+v_\mu=m(x)v_\mu^\alpha,
\]
so the natural length scale on which diffusion competes with the remaining
terms is
\[
\varepsilon
=
\mu^{-1/2}
=
|\lambda|^{-1/2}.
\]
This coincides with the rigorous localization scale. Note that the support
and free-boundary convergence in
\eqref{Conclusion2}--\eqref{Conclusion3}, including the rate
$O(|\lambda|^{-1/2})$, are rigorous consequences of
Lemma~\ref{lem:Linfty-negative-lambda}, Theorem~\ref{thm:Comp}, and the
interior comparison. Formulae \eqref{4}--\eqref{6} explain the dominant
balance and the natural boundary-layer scale.
\end{rem}
\noindent
{\bf \sffamily Acknowledgements} JID was partially supported by the
projects PID-2020-112517GBI00 of the AEI and PID2023-146754NB-I00 funded by
MCIU/AEI/10.13039/501100011033 and FEDER, EU.
MCIU/AEI/10.13039/-501100011033/FEDER, EU.

\end{document}